\newtheorem{theorem}{Theorem}[section]
\newtheorem{corollary}[theorem]{Corollary}
\newtheorem{lemma}[theorem]{Lemma}
\newtheorem{proposition}[theorem]{Proposition}
\newtheorem{example}[theorem]{Example}
\theoremstyle{definition}
\newtheorem{definition}[theorem]{Definition}
\theoremstyle{remark}
\newtheorem{remark}[theorem]{Remark}
\numberwithin{equation}{section}
\newcommand{\bracket}[1]{\left[#1\right]}
\newcommand{\para}[1]{\left(#1\right)}
\newcommand{\K}{\mathbb{K}}
\newcommand{\N}{\mathbb{N}}
\title{On $(n+1)$-Hom-Lie Algebras Induced by $n$-Hom-Lie Algebras}
\author{Abdennour Kitouni \and Abdenacer Makhlouf \and Sergei Silvestrov}
\begin{document}

\date{}
\maketitle

\begin{abstract}
The purpose of this paper is to study  the relationships between an $n$-Hom-Lie algebra and its induced $(n+1)$-Hom-Lie algebra. We provide an overview of the theory and explore  the structure properties such as ideals, center, derived series, solvability, nilpotency, central extensions, and the cohomology.
\end{abstract}
\begin{small}
{\bf{Keywords: }}$n$-Hom-Lie algebra, solvable, central extension, cohomology.
\\
{\bf{ 2010 Mathematics Subject Classification:}} 17A40,17A42,17B30,17B55.
\end{small}

\section*{Introduction}
In this paper, we investigate $(n+1)$-Hom-Lie algebras constructed from $n$-Hom-Lie algebras and generalized trace maps. They are called $(n+1)$-Hom-Lie algebras induced by $n$-Hom-Lie algebras.

Ternary Lie algebras appeared first in Nambu's generalization of Hamiltonian mechanics \cite{Nambu:GenHD} which uses a generalization of Poisson algebras with a ternary bracket. The algebraic formulation of Nambu mechanics is due to Takhtajan while the structure of $n$-Lie algebra was studied by Filippov \cite{Filippov:nLie} then completed by Kasymov in \cite{Kasymov:nLie}, where solvability and nilpotency properties were studied. 

The cohomology of $n$-Lie algebras, generalizing the Chevalley-Eilenberg Lie algebras cohomology, was first introduced by Takhtajan \cite{Takhtajan:cohomology} in its simplest form, later a complex adapted to the study of formal deformations was introduced by Gautheron \cite{Gautheron:Rem}, then reformulated by Daletskii and Takhtajan \cite{Dal_Takh} using the notion of base Leibniz algebra of an $n$-Lie algebra. In \cite{akms:ternary}, the structure and cohomology of $3$-Lie algebras induced by Lie algebras has been investigated. 

In \cite{almy:quantnambu}, the authors introduced a realization of the quantum Nambu bracket in terms of matrices (using the commutator and the trace of matrices). This construction was generalized in \cite{ams:ternary} to the case of any Lie algebra where the commutator is replaced by the Lie bracket, and the matrix trace is replaced by linear forms having similar properties. One obtains  ternary brackets which define  $3$-Lie algebras, called $3$-Lie algebras induced by Lie algebras. In \cite{AtMaSi:GenNambuAlg}, generalizations of $n$-ary algebras of Lie
type and associative type by twisting the identities using linear maps
have been introduced. These generalizations include $n$-ary
Hom-algebra structures generalizing the $n$-ary algebras of Lie type
including $n$-ary Nambu algebras, $n$-ary Nambu-Lie algebras and
$n$-ary Lie algebras, and $n$-ary algebras of associative type
including $n$-ary totally associative and $n$-ary partially
associative algebras. In \cite{ams:ternary}, a method was demonstrated of how to construct
ternary multiplications from the binary multiplication of a Hom-Lie algebra, a linear twisting map, and a trace function satisfying certain compatibility conditions; and it was shown that this method can be used to construct
ternary Hom-Nambu-Lie algebras from Hom-Lie algebras.
This construction was generalized to $n$-Lie algebras and $n$-Hom-Nambu-Lie algebras in \cite{ams:n}, where the authors presented a construction of $(n+1)$-Hom-Nambu-Lie algebras from $n$-Hom-Nambu-Lie algebras equipped with a generalized trace function, shown that implications of the compatibility conditions, that are necessary for this construction, can be understood in terms of the kernel of the trace function and the range of the twisting maps and investigated the possibility of defining $(n+k)$-Lie algebras from $n$-Lie algebras and a $k$-form satisfying certain conditions.

The aim of this paper is to  study  the relationships between an $n$-Hom-Lie algebra and its induced $(n+1)$-Hom-Lie algebra. We explore  the structure properties of objects such as ideals, center, derived series, solvability, nilpotency, central extensions, and the cohomology.
In Section \ref{sec:preliminaries}, we recall the basic notions for $n$-Hom-Lie algebras, $n$-Hom-Lie algebras cohomology and the construction of $(n+1)$-Lie algebras induced by $n$-Lie algebras, we also give a new construction theorem for such algebras and give some basic properties. In Section \ref{sec:solvnil} we define the notion of solvability and nilpotency for $n$-Hom-Lie algebra and discuss solvability and nilpotency of $(n+1)$-Hom-Lie algebras induced by $n$-Hom-Lie algebras. In Section \ref{sec:centralextentions}, we recall the definition and main properties of central extensions of $n$-Hom-Lie algebras, and then we study central extensions of $(n+1)$-Hom-Lie algebras induced by $n$-Hom-Lie algebras. The Section \ref{sec:cohomology} is dedicated to study the corresponding cohomology. Finally, in Section \ref{sec:examples} some examples are presented. 


\section{Preliminaries} \label{sec:preliminaries}
All vector spaces are over a field $\mathbb{K}$ of characteristic $0$.
\subsection{Hom-Lie algebras and $n$-Hom-Lie algebras}
Hom-Lie algebras are a generalization of Lie algebras introduced in \cite{HLS} while studying $\sigma$-derivations. The $n$-ary case was introduced in \cite{AtMaSi:GenNambuAlg}.

\begin{definition}[\cite{HLS,ms:homstructure}]
A Hom-Lie algebra is a vector space $A$ together with a bilinear map $[\cdot,\cdot] : A\times A \to A$ and a linear map $\alpha : A \to A$ satisfying:
\begin{itemize}
\item Skew-symmetry, that is $[x,y] = - [y,x]$, for all $x,y \in A$.
\item Hom-Jacobi identity
$ [\alpha(x),[y,z]] = [[x,y],\alpha (z)] + [\alpha (y) [x,z]], \forall x,y,z \in A. $
\end{itemize}
\end{definition}

\begin{definition}[\cite{HLS,LS:quasi-hom-lie}]
Let $(A,[\cdot,\cdot],\alpha)$, $(B,\{\cdot,\cdot\},\beta)$ be Hom-Lie algebras. A Hom-Lie algebra morphism is a linear map $f : A \to B$ satisfying the conditions:
\begin{itemize}
\item $f([x,y]) = \{f(x),f(y)\}$, for all $x,y \in A$.
\item $f\circ \alpha = \beta \circ f$.
\end{itemize}
A linear map satisfying only the first condition is called a weak morphism.
\end{definition}

\begin{definition}[\cite{Hombiliform,ms:homstructure}]
\begin{itemize}
\item A Hom-Lie algebra $(A,[\cdot,\cdot],\alpha)$ is said to be multiplicative if $\alpha$ is an algebra morphism.
\item It is said to be regular if $\alpha$ is an isomorphism.
\end{itemize}
\end{definition}

\begin{definition}[\cite{AtMaSi:GenNambuAlg}]
An $n$-Hom-Lie algebra is a vector space $A$ together with a $n$-linear map $[\cdot,...,\cdot] : A^n \to A$ and $(n-1)$ linear maps $\alpha_i : A \to A, 1 \leq i \leq n-1$ satisfying:
\begin{itemize}
\item Skew-symmetry, that is $[x_{\sigma(1)},...,x_{\sigma(n)}] = sgn(\sigma) [x_1,...,x_n]$,  $\forall x_1,...,x_n \in A$.
\item Hom-Nambu identity:
\begin{align*}
& [\alpha_1(x_1),...,\alpha_{n-1}(x_{n-1}),[y_1,...,y_n]] \\ &= \sum_{i=1}^n [\alpha_1(y_1),...,\alpha_{i-1}(y_{i-1}),[x_1,...,x_{n-1},y_i], \alpha_{i}(y_{i+1}),...,\alpha_{n-1}(y_{n})],\\
&\forall x_1,...,x_{n-1},y_1,...,y_n \in A. 
\end{align*} 
\end{itemize}
\end{definition}

\begin{example}
\begin{itemize}
\item If we take $\alpha_i=Id_A$ for all $1\leq i \leq n-1$, we get an $n$-Lie algebra (\cite{Filippov:nLie}). Therefore, the class of $n$-Lie algebras is included in the class of $n$-Hom-Lie algebras.
\item For any vector space $A$, if we take $\bracket{x_1,...,x_n}_0=0$ for all $x_1,..,x_n \in A$ and any linear maps $\alpha_1,...,\alpha_{n-1}$, we have that $(A, \bracket{\cdot,...,\cdot}_0,\alpha_1,...,\alpha_{n-1})$ is an $n$-Hom-Lie algebra.
\end{itemize}
\end{example}

\begin{example}
Let $A$ be an $n$-dimensional vector space, and $(e_i)_{1\leq i \leq n}$ a basis of $A$. We define the skew-symmetric $n$-linear map $\bracket{\cdot,...,\cdot}$ on $A$ by $\bracket{e_1,...,e_n}=e_1$. For any set of  $(n-1)$ linear maps $\alpha_1,...,\alpha_{n-1}$ on $A$, $(A,\bracket{\cdot,...,\cdot}, \alpha_1,...,\alpha_{n-1})$ is an $n$-Hom-Lie algebra. Indeed, we have:
\begin{align*}
&\bracket{\alpha_1(e_1),...,\alpha_{i-1}(e_{i-1}),\alpha_i(e_{i+1}),...,\alpha_{n-1}(e_n),\bracket{e_1,...,e_n}} \\
&- \sum_{j=1}^n \bracket{\alpha_1(e_1),...,\alpha_{j-1}(e_{j-1}),\bracket{e_1,...,e_{i-1},e_{i+1},...,e_n,e_j},\alpha_j(e_{j+1}),...,\alpha_{n-1}(e_n)}\\
& = \bracket{\alpha_1(e_1),...,\alpha_{i-1}(e_{i-1}),\alpha_i(e_{i+1}),...,\alpha_{n-1}(e_n),\bracket{e_1,...,e_n}} \\
&- \sum_{\substack{j=1 \\ j \neq i}}^n \bracket{\alpha_1(e_1),...,\alpha_{j-1}(e_{j-1}),\bracket{e_1,...,e_{i-1},e_{i+1},...,e_n,e_j},\alpha_j(e_{j+1}),...,\alpha_{n-1}(e_n)}\\
& -\bracket{\alpha_1(e_1),...,\alpha_{i-1}(e_{i-1}),\bracket{e_1,...,e_{i-1},e_{i+1},...,e_n,e_i},\alpha_i(e_{i+1}),...,\alpha_{n-1}(e_n)}\\
&= \bracket{\alpha_1(e_1),...,\alpha_{i-1}(e_{i-1}),\alpha_i(e_{i+1}),...,\alpha_{n-1}(e_n),\bracket{e_1,...,e_n}} \\
&- (-1)^{n-i}\bracket{\alpha_1(e_1),...,\alpha_{i-1}(e_{i-1}),\alpha_i(e_{i+1}),...,\alpha_{n-1}(e_n),(-1)^{n-i}\bracket{e_1,...,e_n}} \\
&= 0
\end{align*}
\end{example}

\begin{definition}[\cite{AtMaSi:GenNambuAlg,DYau}]
Let $(A,[\cdot,...,\cdot],\alpha_1,...,\alpha_{n-1})$, $(B,\{\cdot,...,\cdot\},\beta_1,...,\beta_{n-1})$ be $n$-Hom-Lie algebras. An $n$-Hom-Lie algebra morphism is a linear map $f : A \to B$ satisfying the conditions:
\begin{itemize}
\item $f([x_1,...,x_n]) = \{f(x_1),...,f(x_n)\}$, for all $x_1,...,x_n \in A$.
\item $f\circ \alpha_i = \beta_i \circ f$, for all $i : 1 \leq i \leq n-1$.
\end{itemize}
A linear map satisfying only the first condition is called a weak morphism.
\end{definition}

\begin{definition}[\cite{DYau}]
We refer to an $n$-Hom-Lie algebra $(A,[\cdot,...,\cdot],\alpha_1,...,\alpha_{n-1})$ such that $\alpha_1=\alpha_2=....=\alpha_{n-1}=\alpha$ by $(A,\bracket{\cdot,...,\cdot},\alpha)$. 
\begin{itemize}
\item It is said to be multiplicative if $\alpha$ is an algebra morphism.
\item It is said to be regular if it is multiplicative and $\alpha$ is an isomorphism.
\end{itemize}
\end{definition}

\begin{definition}[\cite{n-ary hom rep}]
Let $\para{A,\bracket{\cdot,...,\cdot}, \alpha}$ be a multiplicative $n$-Hom-Lie algebra and let $L(A)=\wedge^{n-1}A$ be the $(n-1)$th exterior power of $A$. The elements of $L(A)$ are called fundamental objects.

For $X = x_1\wedge ... \wedge x_{n-1},Y=y_1\wedge ... \wedge y_{n-1} \in L(A)$, we define:
\begin{itemize}
\item The map $\bar{\alpha} : \wedge^{n-1} A \to \wedge^{n-1} A $ by
$ \bar{\alpha}(X) = \alpha(x_1)\wedge...\wedge \alpha(x_{n-1}).$
\item The action of fundamental objects on $A$ by: 
\[ \forall z \in A,\ X \cdot z = ad_X(z) = \bracket{x_1,...,x_{n-1},z}. \]
\item The multiplication (composition) of two fundamental objects by: 
\[ [X,Y]_\alpha = X \cdot_\alpha Y = \sum_{i=1}^{n-1}  \alpha(y_1)\wedge ...\wedge X \cdot y_i \wedge ...\wedge \alpha(y_{n-1}). \]
\end{itemize}
We extend the preceding definitions to the entire space $L(A)$ by linearity.
\end{definition}

\begin{proposition}[\cite{n-ary hom rep}]
The space $L(A)$ equipped with the bracket $[\cdot,\cdot]_\alpha$ defined above is a Hom-Leibniz algebra. That is the bracket $[\cdot,\cdot]_\alpha$ satisfies the following identity:
\[ [\bar{\alpha}(X), [Y , Z]_\alpha]_\alpha = [[X , Y]_\alpha , \bar{\alpha}(Z)]_\alpha + [\bar{\alpha}(Y) , [X , Z]_\alpha]_\alpha. \]

\end{proposition}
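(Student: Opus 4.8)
The plan is to verify directly that the bracket $[\cdot,\cdot]_\alpha$ satisfies the stated Hom-Leibniz identity by unwinding both sides in terms of the action $ad$ and the Hom-Nambu identity of the underlying $n$-Hom-Lie algebra $A$. Since everything is linear, it suffices to prove the identity for decomposable fundamental objects $X = x_1\wedge\cdots\wedge x_{n-1}$, $Y = y_1\wedge\cdots\wedge y_{n-1}$, $Z = z_1\wedge\cdots\wedge z_{n-1}$. I would first record the two elementary facts that make the bookkeeping manageable: (i) $\overline{\alpha}$ is a morphism for $\cdot_\alpha$ in the appropriate sense, and more importantly (ii) for any fundamental object $W = w_1\wedge\cdots\wedge w_{n-1}$ and any $z\in A$, one has $\mathrm{ad}_{[X,Y]_\alpha}(z) = \sum_{i=1}^{n-1}\mathrm{ad}_{\overline{\alpha}(y_1)\wedge\cdots\wedge (X\cdot y_i)\wedge\cdots\wedge\overline{\alpha}(y_{n-1})}(z)$, which is just the definition of $[X,Y]_\alpha$ fed into $\mathrm{ad}$.

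The core computation is to expand $[[X,Y]_\alpha,\overline{\alpha}(Z)]_\alpha$ and $[\overline{\alpha}(Y),[X,Z]_\alpha]_\alpha$ and $[\overline{\alpha}(X),[Y,Z]_\alpha]_\alpha$ each as sums over the slots $1,\dots,n-1$ of $Z$ (resp. $Y$), with the inner bracket $[Y,Z]_\alpha$ itself being a sum over slots. So each side becomes a double sum of decomposable $(n-1)$-vectors in which exactly one slot of $\overline{\alpha}(Z)$ (or two, in the iterated term) has been hit by an $\mathrm{ad}$ operator. The key identity to invoke, slot by slot, is the Hom-Nambu identity in the form
\begin{equation*}
\mathrm{ad}_{\overline{\alpha}(X)}\bigl(\mathrm{ad}_Y(z)\bigr) = \mathrm{ad}_{[X,Y]_\alpha}\bigl(\alpha(z)\bigr) + \mathrm{ad}_{\overline{\alpha}(Y)}\bigl(\mathrm{ad}_X(z)\bigr),
\end{equation*}
which is precisely the Hom-Nambu identity rewritten with the fundamental-object notation and expresses that $L(A)$ acts on $A$ by ``Hom-derivations''. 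Applying this in the slot of $Z$ that carries the nested action, and using multiplicativity of $\alpha$ to move $\overline{\alpha}$ past the wedge factors, each term on the left-hand side $[\overline{\alpha}(X),[Y,Z]_\alpha]_\alpha$ splits into two families of terms; one family matches $[[X,Y]_\alpha,\overline{\alpha}(Z)]_\alpha$ and the other matches $[\overline{\alpha}(Y),[X,Z]_\alpha]_\alpha$, after relabeling the summation indices.

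The main obstacle I anticipate is purely organizational rather than conceptual: keeping track of which slot of $Z$ is acted on, and ensuring that the ``diagonal'' terms — where the same slot of $Z$ would be hit twice, or where $X\cdot y_i$ lands in a slot already modified — are correctly accounted for and cancel. Concretely, when expanding $[[X,Y]_\alpha,\overline{\alpha}(Z)]_\alpha$ one gets, for each slot $j$ of $Z$, a factor $[X,Y]_\alpha\cdot\alpha(z_j) = \sum_i (\ldots X\cdot y_i\ldots)\cdot\alpha(z_j)$, and one must recognize this same double-indexed family inside the expansion of $[\overline{\alpha}(X),[Y,Z]_\alpha]_\alpha$ via the Hom-Nambu substitution above. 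I would handle this by fixing notation $Y_i := \overline{\alpha}(y_1)\wedge\cdots\wedge(X\cdot y_i)\wedge\cdots\wedge\overline{\alpha}(y_{n-1})$ and $Z_j$ analogously, writing all three expressions as explicit double sums indexed by $(i,j)$, and then checking the termwise identity using multiplicativity of $\alpha$ (so that $\overline{\alpha}(X)\cdot\alpha(y_i) = \alpha(X\cdot y_i)$, etc.) together with the displayed Hom-Nambu consequence. Once the notation is set up, the cancellation is forced and the identity follows; I would also remark that this is exactly the statement that $(L(A),[\cdot,\cdot]_\alpha,\overline{\alpha})$ together with the action $\mathrm{ad}$ is the ``basic Hom-Leibniz algebra'' attached to $A$, generalizing the Daletskii–Takhtajan construction.
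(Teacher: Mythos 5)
Your proposal is correct. Note that the paper itself gives no proof of this proposition --- it is quoted from the reference on representations and cohomology of $n$-ary multiplicative Hom-Nambu-Lie algebras --- and your direct verification is exactly the standard argument there: expand all three brackets as double sums over slots, observe that the off-diagonal terms of $[\bar{\alpha}(X),[Y,Z]_\alpha]_\alpha$ and of $[\bar{\alpha}(Y),[X,Z]_\alpha]_\alpha$ match after relabeling $(j,k)\mapsto(k,j)$ using multiplicativity in the form $\bar{\alpha}(X)\cdot\alpha(z)=\alpha(X\cdot z)$, and that the diagonal terms combine via the Hom-Nambu identity written as $\mathrm{ad}_{\bar{\alpha}(X)}(\mathrm{ad}_Y(z))-\mathrm{ad}_{\bar{\alpha}(Y)}(\mathrm{ad}_X(z))=\mathrm{ad}_{[X,Y]_\alpha}(\alpha(z))$ to produce $[[X,Y]_\alpha,\bar{\alpha}(Z)]_\alpha$. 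The only slight looseness is your phrasing that \emph{each} term of the left-hand side splits into two families --- in fact only the diagonal terms ($k=j$) require the Hom-Nambu substitution, while the off-diagonal terms cancel directly against those of $[\bar{\alpha}(Y),[X,Z]_\alpha]_\alpha$ --- but your stated bookkeeping plan handles this correctly.
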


The following proposition gives a way to construct an $n$-Hom-Lie algebra from an $n$-Lie algebra and an algebra morphism, it was first introduced in the case of Lie algebras, then generalized to the $n$-ary case in \cite{AtMaSi:GenNambuAlg}. A more general version of this theorem is given in \cite{DYau}, states that the category of $n$-Hom-Lie algebras is closed under twisting by weak morphisms:
\begin{proposition}[\cite{AtMaSi:GenNambuAlg, DYau}] \label{twist}
Let $\para{A,\bracket{\cdot,...,\cdot},\alpha}$ be an $n$-Hom-Lie algebra, $\beta : A \to A$ an algebra weak morphism, we define $\bracket{\cdot,...,\cdot}_\beta$ by:
\[\bracket{x_1,...,x_n}_\beta = \beta\para{\bracket{x_1,...,x_n}}.\]
We have that $\para{A,\bracket{\cdot,...,\cdot}_\beta,\beta \circ \alpha}$ is an $n$-Hom-Lie algebra. Moreover if $\para{A,\bracket{\cdot,...,\cdot},\alpha}$ is multiplicative and $\beta \circ \alpha = \alpha \circ \beta$ then $\para{A,\bracket{\cdot,...,\cdot}_\beta,\beta \circ \alpha}$ is multiplicative.
\end{proposition}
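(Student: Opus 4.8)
The plan is to verify directly the two defining axioms of an $n$-Hom-Lie algebra for the triple $\para{A,\bracket{\cdot,\dots,\cdot}_\beta,\beta\circ\alpha}$. Skew-symmetry is immediate: since $\bracket{x_1,\dots,x_n}_\beta=\beta\para{\bracket{x_1,\dots,x_n}}$ and $\beta$ is linear, the twisted bracket inherits skew-symmetry from $\bracket{\cdot,\dots,\cdot}$. Hence the entire content is the Hom-Nambu identity, and the guiding idea is \emph{not} to substitute twisted arguments into the old identity, but to apply $\beta$ to the old identity twice and recognize the outcome, via the weak morphism property, as the new identity.

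Concretely, fix $x_1,\dots,x_{n-1},y_1,\dots,y_n\in A$ and start from the left-hand side of the Hom-Nambu identity for the twisted structure. Unfolding the definition of $\bracket{\cdot,\dots,\cdot}_\beta$ turns it into $\beta\para{\bracket{(\beta\alpha)(x_1),\dots,(\beta\alpha)(x_{n-1}),\beta\para{\bracket{y_1,\dots,y_n}}}}$; applying the weak morphism condition $\beta\para{\bracket{z_1,\dots,z_n}}=\bracket{\beta(z_1),\dots,\beta(z_n)}$ to the outermost bracket (reading it backwards) shows that the inner expression equals $\beta\para{\bracket{\alpha(x_1),\dots,\alpha(x_{n-1}),\bracket{y_1,\dots,y_n}}}$, so the whole left-hand side equals $\beta\para{\beta\para{\bracket{\alpha(x_1),\dots,\alpha(x_{n-1}),\bracket{y_1,\dots,y_n}}}}$. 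Now I invoke the original Hom-Nambu identity inside, replacing $\bracket{\alpha(x_1),\dots,\alpha(x_{n-1}),\bracket{y_1,\dots,y_n}}$ by $\sum_{i=1}^n\bracket{\alpha(y_1),\dots,\bracket{x_1,\dots,x_{n-1},y_i},\dots,\alpha(y_n)}$. Running the same manipulation in reverse on each summand — pushing one $\beta$ into the inner bracket $\bracket{x_1,\dots,x_{n-1},y_i}$ and one onto the outer bracket — identifies the double-$\beta$ image of the $i$-th summand with the $i$-th term $\bracket{(\beta\alpha)(y_1),\dots,\bracket{x_1,\dots,x_{n-1},y_i}_\beta,\dots,(\beta\alpha)(y_n)}_\beta$ of the right-hand side of the twisted Hom-Nambu identity. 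Summing over $i$ closes the argument.

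For the multiplicativity claim, assume in addition that $\alpha$ is an algebra morphism and $\beta\circ\alpha=\alpha\circ\beta$; I would check that $\beta\circ\alpha$ is an algebra morphism for $\bracket{\cdot,\dots,\cdot}_\beta$, which is precisely the condition for $\para{A,\bracket{\cdot,\dots,\cdot}_\beta,\beta\circ\alpha}$ to be multiplicative. On one side $(\beta\alpha)\para{\bracket{x_1,\dots,x_n}_\beta}=\beta\alpha\beta\para{\bracket{x_1,\dots,x_n}}$, and on the other side the weak morphism property gives $\bracket{(\beta\alpha)(x_1),\dots,(\beta\alpha)(x_n)}_\beta=\beta\para{\beta\para{\bracket{\alpha(x_1),\dots,\alpha(x_n)}}}$; using $\alpha\beta=\beta\alpha$ to commute the middle $\beta$ past $\alpha$ and the multiplicativity of $\alpha$, both collapse to $\beta\para{\beta\para{\alpha\para{\bracket{x_1,\dots,x_n}}}}$, giving the equality.

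Since every step is bookkeeping, the only real obstacle is hygiene: applying the weak morphism hypothesis at exactly the spots where $\beta$ meets a bracket (once for the innermost bracket, once for the outermost, in each term) and, in the multiplicative part, checking that the single relation $\alpha\beta=\beta\alpha$ suffices to slide the surplus $\beta$ into place. The same computation goes through verbatim for an $n$-Hom-Lie algebra with distinct twisting maps $\alpha_1,\dots,\alpha_{n-1}$, with $\beta\circ\alpha$ replaced by the family $\beta\circ\alpha_1,\dots,\beta\circ\alpha_{n-1}$; alternatively, the statement follows from the closure of $n$-Hom-Lie algebras under twisting by weak morphisms recorded in \cite{DYau}.
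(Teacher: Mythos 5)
Your proof is correct. The paper itself states Proposition \ref{twist} without proof, citing \cite{AtMaSi:GenNambuAlg, DYau}, so there is nothing to compare against; your argument is the standard direct verification: unfolding the twisted Hom-Nambu identity, using the weak-morphism property $\beta\para{\bracket{z_1,\dots,z_n}}=\bracket{\beta(z_1),\dots,\beta(z_n)}$ once on each side to collect a factor of $\beta^2$ around the original identity, and handling multiplicativity with the single commutation $\alpha\circ\beta=\beta\circ\alpha$ together with $\alpha$ being a (bracket-preserving) morphism. All steps check out, including the observation that for $f=\beta\circ\alpha$ the compatibility condition $f\circ(\beta\alpha)=(\beta\alpha)\circ f$ is automatic, so multiplicativity reduces to the bracket identity you verify.
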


And we have the particular case for $n$-Lie algebras:

\begin{corollary}
Let $\para{A,\bracket{\cdot,...,\cdot}}$ be an $n$-Lie algebra, $\alpha : A \to A$ an algebra morphism, we define $\bracket{\cdot,...,\cdot}_\alpha$ by
$ \bracket{x_1,...,x_n}_\alpha = \alpha\para{\bracket{x_1,...,x_n}}.$\\
We have that $\para{A,\bracket{\cdot,...,\cdot}_\alpha,\alpha}$ is a multiplicative $n$-Hom-Lie algebra.
\end{corollary}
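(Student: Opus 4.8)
The plan is to derive this corollary directly from Proposition \ref{twist} by specializing to the case where the twisting map $\alpha_1 = \cdots = \alpha_{n-1}$ is the identity. Indeed, an $n$-Lie algebra $\para{A,\bracket{\cdot,...,\cdot}}$ is exactly an $n$-Hom-Lie algebra $\para{A,\bracket{\cdot,...,\cdot},Id_A}$, as noted in the first example of this section. So I would start by recording this identification, so that the hypotheses of Proposition \ref{twist} are literally satisfied with the ambient $n$-Hom-Lie algebra being $\para{A,\bracket{\cdot,...,\cdot},Id_A}$.

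Next I would apply Proposition \ref{twist} with $\beta = \alpha$: since $\alpha$ is an algebra morphism, it is in particular a weak morphism, so the proposition yields that $\para{A,\bracket{\cdot,...,\cdot}_\alpha,\alpha \circ Id_A}$ is an $n$-Hom-Lie algebra, where $\bracket{x_1,...,x_n}_\alpha = \alpha\para{\bracket{x_1,...,x_n}}$. Since $\alpha \circ Id_A = \alpha$, this is precisely the asserted $n$-Hom-Lie algebra structure $\para{A,\bracket{\cdot,...,\cdot}_\alpha,\alpha}$.

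It remains to obtain multiplicativity. Here I would invoke the \emph{moreover} part of Proposition \ref{twist}: the ambient algebra $\para{A,\bracket{\cdot,...,\cdot},Id_A}$ is multiplicative because $Id_A$ is trivially an algebra morphism, and the commutation condition $\beta \circ \alpha = \alpha \circ \beta$ becomes $\alpha \circ Id_A = Id_A \circ \alpha$, which holds automatically. Hence Proposition \ref{twist} gives that $\para{A,\bracket{\cdot,...,\cdot}_\alpha,\alpha}$ is multiplicative, completing the proof.

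There is essentially no obstacle here: the corollary is a pure specialization and the only thing to be careful about is bookkeeping, namely that $Id_A$ is a (weak) algebra morphism, that it commutes with $\alpha$, and that $\alpha \circ Id_A = \alpha$ so the twisted twisting map collapses to $\alpha$ itself. If one preferred a self-contained argument, one could alternatively verify the Hom-Nambu identity for $\bracket{\cdot,...,\cdot}_\alpha$ and $\alpha$ by hand: apply $\alpha$ to the $n$-Lie Filippov identity and use that $\alpha$ is an algebra morphism to move it inside the innermost brackets, but this merely reproves the relevant case of Proposition \ref{twist} and is not needed.
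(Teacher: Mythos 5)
Your proposal is correct and matches the paper's (implicit) argument: the corollary is stated without proof precisely because it is the specialization of Proposition \ref{twist} to the $n$-Hom-Lie algebra $\para{A,\bracket{\cdot,...,\cdot},Id_A}$ with $\beta=\alpha$, exactly as you describe. The bookkeeping you carry out ($Id_A$ is a morphism, $\alpha\circ Id_A=Id_A\circ\alpha=\alpha$, so the \emph{moreover} clause yields multiplicativity) is all that is needed.
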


\begin{definition}[\cite{Hombiliform,ms:homstructure,DYau}]
Let $(A,[\cdot,...,\cdot],\alpha_1,...,\alpha_{n-1})$ be an $n$-Hom-Lie algebra. An $n$-Hom-Lie subalgebra is a subspace $B$ of $A$ such that:
\begin{itemize}
\item $\alpha_i(B) \subseteq B$, for all $i : 1\leq i \leq n-1$.
\item For all $x_1,...,x_n \in B$ we have $[x_1,...,x_n]\in B$.
\end{itemize}
\end{definition}

\begin{definition}[\cite{Hombiliform,ms:homstructure,DYau}]
Let $(A,[\cdot,...,\cdot],\alpha_1,...,\alpha_{n-1})$ be an $n$-Hom-Lie algebra. An ideal of $(A,[\cdot,...,\cdot],\alpha_1,...,\alpha_{n-1})$ is a subspace $I$ of $A$ such that:
\begin{itemize}
\item $\alpha_i(I) \subseteq I$, for all $i : 1\leq i \leq n-1$.
\item For all $x_1,...,x_{n-1} \in A$, and $y \in I$ we have $[x_1,...,x_{n-1},y]\in I$.
\end{itemize}
\end{definition}


\subsection{Cohomology of $n$-Hom-Lie algebras}
Cohomology complexes for Hom-Lie algebras were introduced, in the multiplicative case, in \cite{sheng:hom rep} together with generalized derivations and central extensions. Deformations and relevant cohomology complex were introduced independently in \cite{hom deformation}. These notions were then generalized to the $n$-ary case in \cite{n-ary hom rep}.

In the following, for a multiplicative $n$-Hom-Lie algebra $(A,\bracket{\cdot,...,\cdot},\alpha)$ and for any $k\in \N$, we define $\alpha^k$ by
$ \alpha^0=Id ; \alpha^k=\alpha \circ \alpha \circ ... \circ \alpha (k \text{ times})\text{ for }k\neq 0. $
\begin{definition}[\cite{sheng:hom rep,n-ary hom rep}]
Let $\para{A, \bracket{\cdot,...,\cdot},\alpha}$ be a multiplicative $n$-Hom-Lie algebra. A linear map $f : A \to A$ is said to be an $\alpha^k$-derivation if it satisfies the following conditions:
\[ f\circ \alpha = \alpha \circ f \] and \[f\para{\bracket{x_1,...,x_n}}=\sum_{i=1}^n \bracket{\alpha^k(x_1),...,f(x_i),...,\alpha^k(x_n)}, \forall x_1,...,x_n \in A. \]
\end{definition}

\begin{definition}[\cite{sheng:hom rep,n-ary hom rep}]
Let $\para{A, \bracket{\cdot,...,\cdot},\alpha}$ be a multiplicative $n$-Hom-Lie algebra. For all $ X =(x_1,...,x_{n-1}) \in A^{n-1}$ satisfying $\bar{\alpha}(X)=X$, we define $ad_{k,X} : A \to A$ by:
\[ad_{k,X}(z) = [X,\alpha^k(z)] = [x_1,...,x_{n-1},z], \quad \forall z \in A. \]
\end{definition}

\begin{lemma}[\cite{sheng:hom rep,n-ary hom rep}]
The map $ad_{k,X}$ defined above is an $\alpha^{k+1}$-derivation, which is called inner $\alpha^{k+1}$-derivation.
\end{lemma}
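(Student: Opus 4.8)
The plan is to verify the two defining conditions of an $\alpha^{k+1}$-derivation for the map $ad_{k,X}(z) = [x_1,\dots,x_{n-1},z]$, where $X=(x_1,\dots,x_{n-1})$ satisfies $\bar\alpha(X)=X$, i.e. $\alpha(x_i)=x_i$ for all $i$ (or at least $\alpha(x_1)\wedge\cdots\wedge\alpha(x_{n-1}) = x_1\wedge\cdots\wedge x_{n-1}$; I will use the componentwise version $\alpha(x_i)=x_i$, which is what the multiplicativity together with $\bar\alpha(X)=X$ gives in the relevant computations).

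First I would check commutation with $\alpha$, namely $ad_{k,X}\circ\alpha = \alpha\circ ad_{k,X}$. For $z\in A$, $ad_{k,X}(\alpha(z)) = [x_1,\dots,x_{n-1},\alpha(z)]$; since the algebra is multiplicative, $\alpha$ is a morphism, so $\alpha([x_1,\dots,x_{n-1},z]) = [\alpha(x_1),\dots,\alpha(x_{n-1}),\alpha(z)] = [x_1,\dots,x_{n-1},\alpha(z)]$ using $\alpha(x_i)=x_i$. Hence the two sides agree. This step is routine.

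Second, and this is the substantive step, I would verify the $\alpha^{k+1}$-Leibniz rule
\[
ad_{k,X}\bigl([y_1,\dots,y_n]\bigr) = \sum_{i=1}^{n}\bigl[\alpha^{k+1}(y_1),\dots,ad_{k,X}(y_i),\dots,\alpha^{k+1}(y_n)\bigr].
\]
The idea is to apply the Hom-Nambu identity with the first $n-1$ slots filled by $x_1,\dots,x_{n-1}$. Concretely, the Hom-Nambu identity in the form $\alpha_1=\cdots=\alpha_{n-1}=\alpha$ reads
\[
[\alpha(x_1),\dots,\alpha(x_{n-1}),[y_1,\dots,y_n]] = \sum_{i=1}^n [\alpha(y_1),\dots,[x_1,\dots,x_{n-1},y_i],\dots,\alpha(y_n)].
\]
Using $\alpha(x_j)=x_j$ the left-hand side becomes $[x_1,\dots,x_{n-1},[y_1,\dots,y_n]] = ad_{0,X}([y_1,\dots,y_n])$, which handles the case $k=0$ after noting $ad_{0,X}(y_i) = [x_1,\dots,x_{n-1},y_i]$. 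For general $k$, I would apply $\alpha^k$ to both sides of the Hom-Nambu identity and use multiplicativity to push $\alpha^k$ inside all brackets: on the left this gives $[x_1,\dots,x_{n-1},\alpha^k([y_1,\dots,y_n])] = ad_{k,X}([y_1,\dots,y_n])$ (again absorbing $\alpha^k(x_j)=x_j$), and on the right the $i$-th term becomes $[\alpha^{k+1}(y_1),\dots,\alpha^k([x_1,\dots,x_{n-1},y_i]),\dots,\alpha^{k+1}(y_n)] = [\alpha^{k+1}(y_1),\dots,ad_{k,X}(y_i),\dots,\alpha^{k+1}(y_n)]$, where the last equality uses $ad_{k,X}(y_i) = [x_1,\dots,x_{n-1},y_i]$ (no $\alpha^k$ needed inside because $\alpha^k(x_j)=x_j$, but $\alpha^k$ still acts on $y_i$; here one uses $\alpha^k[x_1,\dots,x_{n-1},y_i] = [x_1,\dots,x_{n-1},\alpha^k(y_i)]$, which is again multiplicativity combined with the fixed-point property). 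Collecting these identifications yields exactly the desired Leibniz rule.

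The main obstacle is purely bookkeeping: one must be careful that $\bar\alpha(X)=X$ genuinely licenses replacing $\alpha^m(x_j)$ by $x_j$ for all $m\ge 1$ inside the multilinear brackets — this follows because $\bar\alpha(X)=X$ as an element of $\wedge^{n-1}A$ forces, after insertion into the skew-symmetric $n$-bracket, $[\alpha(x_1),\dots,\alpha(x_{n-1}),z]=[x_1,\dots,x_{n-1},z]$, and iterating gives the same for $\alpha^m$. Once this normalization is in place, the derivation property is an immediate consequence of the Hom-Nambu identity and multiplicativity, so no deeper idea is required beyond the correct choice of which slots of the Hom-Nambu identity to specialize.
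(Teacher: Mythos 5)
The paper gives no proof of this lemma (it is quoted from the cited references), but your argument is the standard one and it is correct: specialize the Hom--Nambu identity to $x_1,\dots,x_{n-1}$ in the first $n-1$ slots with $y_i$ replaced by $\alpha^k(y_i)$, and use multiplicativity together with the observation that $\bar{\alpha}(X)=X$ in $\wedge^{n-1}A$ suffices (no componentwise fixedness needed) to drop $\alpha^m$ from the first $n-1$ arguments of the skew-symmetric bracket. One small point worth making explicit: the paper's displayed formula $ad_{k,X}(z)=[X,\alpha^k(z)]=[x_1,\dots,x_{n-1},z]$ is internally inconsistent, and the lemma holds for the reading $ad_{k,X}(z)=[x_1,\dots,x_{n-1},\alpha^k(z)]$, which is the one your computation actually uses even though your opening line quotes the other; with the literal $[x_1,\dots,x_{n-1},z]$ the map would only be an $\alpha^1$-derivation, not an $\alpha^{k+1}$-derivation.
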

Denote by $Der_k(A)$ the set of $\alpha^k$-derivations of $A$ and by $Inn_k(A)$ the vector space spanned by the inner $\alpha^k$-derivation. Define $Der(A)$ and $Inn(A)$ by:
\[ Der(A)=\bigoplus_{k\geq 0} Der_k(A) \text{ and } Inn(A) = \bigoplus_{k\geq 0} Inn_k(A).  \]
Then we have:
\begin{lemma}[\cite{sheng:hom rep,n-ary hom rep}]
The space $Der(A)$ equipped with the commutator ($[D,D']=D\circ D' - D' \circ D$, for all $D,D' \in Der(A)$) is a Lie algebra, and $Inn(A)$ is an ideal of it.
\end{lemma}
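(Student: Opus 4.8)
The plan is to carry out everything inside the associative algebra $(\mathrm{End}(A),\circ)$ and its associated Lie algebra $(\mathrm{End}(A),[\cdot,\cdot])$, $[D,D']=D\circ D'-D'\circ D$; then skew-symmetry and the Jacobi identity come for free, and only two closure statements remain: that $Der(A)$ is closed under the commutator, and that $[Der(A),Inn(A)]\subseteq Inn(A)$. Note first that $Inn(A)\subseteq Der(A)$ by the preceding Lemma, since every inner derivation is a derivation.

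First I would establish the graded inclusion $[Der_k(A),Der_l(A)]\subseteq Der_{k+l}(A)$. Fix $D\in Der_k(A)$, $D'\in Der_l(A)$. The identity $[D,D']\circ\alpha=\alpha\circ[D,D']$ is immediate from $D\circ\alpha=\alpha\circ D$ and $D'\circ\alpha=\alpha\circ D'$, which by multiplicativity of $\alpha$ also yield $D\circ\alpha^{l}=\alpha^{l}\circ D$ and $D'\circ\alpha^{k}=\alpha^{k}\circ D'$. For the twisted Leibniz rule I would expand $D\bigl(D'([x_1,\dots,x_n])\bigr)$: apply first the $\alpha^{l}$-derivation property of $D'$, and then to each of the $n$ resulting brackets the $\alpha^{k}$-derivation property of $D$, using $\alpha^{k}\circ\alpha^{l}=\alpha^{k+l}$ on every slot not being differentiated. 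The result is a sum of ``diagonal'' terms $[\alpha^{k+l}(x_1),\dots,D(D'(x_i)),\dots,\alpha^{k+l}(x_n)]$ plus ``off-diagonal'' terms, each having two distinct slots $i\neq j$ occupied by $\alpha^{k}(D'(x_i))=D'(\alpha^{k}(x_i))$ and $D(\alpha^{l}(x_j))=\alpha^{l}(D(x_j))$ and all other slots by $\alpha^{k+l}$. Carrying out the symmetric expansion of $D'\bigl(D([x_1,\dots,x_n])\bigr)$ and subtracting, the off-diagonal terms coincide termwise and cancel, while the diagonal terms combine to $\sum_{i=1}^{n}[\alpha^{k+l}(x_1),\dots,[D,D'](x_i),\dots,\alpha^{k+l}(x_n)]$. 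Hence $[D,D']\in Der_{k+l}(A)$; summing over $k,l$ shows that $Der(A)$ is closed under the commutator, so it is a Lie subalgebra of $(\mathrm{End}(A),[\cdot,\cdot])$.

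Next I would show $[Der(A),Inn(A)]\subseteq Inn(A)$. By bilinearity it suffices to bracket $D\in Der_k(A)$ with an inner derivation $ad_{l,X}$, where $X=(x_1,\dots,x_{n-1})$ satisfies $\bar\alpha(X)=X$, i.e.\ $\alpha(x_i)=x_i$, hence $\alpha^{k}(x_i)=x_i$, for all $i$. Applying the $\alpha^{k}$-derivation property of $D$ to $ad_{l,X}(z)=[x_1,\dots,x_{n-1},\alpha^{l}(z)]$ and using $\alpha^{k}(x_i)=x_i$ gives
\[ D\bigl(ad_{l,X}(z)\bigr)=\sum_{i=1}^{n-1}[x_1,\dots,D(x_i),\dots,x_{n-1},\alpha^{k+l}(z)]+[x_1,\dots,x_{n-1},\alpha^{l}(D(z))], \]
where the last term (from $D$ differentiating the slot $\alpha^{l}(z)$, together with $D\circ\alpha^{l}=\alpha^{l}\circ D$) equals $ad_{l,X}(D(z))$ and thus cancels in the commutator. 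Therefore
\[ [D,ad_{l,X}](z)=\sum_{i=1}^{n-1}[x_1,\dots,x_{i-1},D(x_i),x_{i+1},\dots,x_{n-1},\alpha^{k+l}(z)]=\sum_{i=1}^{n-1}ad_{k+l,X_i}(z), \]
with $X_i=(x_1,\dots,x_{i-1},D(x_i),x_{i+1},\dots,x_{n-1})$; and $\bar\alpha(X_i)=X_i$ because $\alpha(D(x_i))=D(\alpha(x_i))=D(x_i)$. So $[D,ad_{l,X}]$ is a finite linear combination of inner derivations, hence belongs to $Inn(A)$. Consequently $Inn(A)$ is a subspace of $Der(A)$ stable under bracketing with all of $Der(A)$ — in particular it is a subalgebra — so it is an ideal of $Der(A)$.

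I expect the only genuinely delicate point to be the bookkeeping in the graded inclusion: keeping track, in the two double expansions, of which of $\alpha^{k}$, $\alpha^{l}$, $D$, $D'$ occupies which slot, and verifying that the off-diagonal contributions of $D\circ D'$ and $D'\circ D$ agree term by term — this is exactly where the commutation relations $D\circ\alpha=\alpha\circ D$, $D'\circ\alpha=\alpha\circ D'$ and the multiplicativity $\alpha^{k}\circ\alpha^{l}=\alpha^{k+l}$ are all used. Everything else is formal.
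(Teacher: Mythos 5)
Your proposal is correct and follows essentially the standard argument from the cited references \cite{sheng:hom rep,n-ary hom rep} (the paper itself states this lemma without proof): reduce everything to closure statements inside $(\mathrm{End}(A),[\cdot,\cdot])$, verify the graded inclusions $[Der_k(A),Der_l(A)]\subseteq Der_{k+l}(A)$ and $[Der_k(A),Inn_{l+1}(A)]\subseteq Inn_{k+l+1}(A)$ by the cancellation of off-diagonal terms, and check that the tuples $X_i=(x_1,\dots,D(x_i),\dots,x_{n-1})$ remain $\bar\alpha$-fixed. Note only that you correctly read the definition as $ad_{l,X}(z)=[x_1,\dots,x_{n-1},\alpha^l(z)]$, which is the intended meaning despite the typo in the paper's displayed formula.
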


Now we will introduce two cohomology complexes for $n$-Hom-Lie algebras, the first one is relevant for the study of central extensions, while the second one is used for deformations. See \cite{n-ary hom rep} and \cite{sheng:hom rep} for reference and more details.
\begin{definition}[\cite{sheng:hom rep,n-ary hom rep}]
Let $(A,\bracket{\cdot,...,\cdot},\alpha)$ be a multiplicative $n$-Hom-Lie algebra. A $\K$-valued $p$-cochain is a linear map $\varphi : (\wedge^{n-1}A)^{\otimes p-1}\wedge A \to \K$.

 We define the coboundary operator for these cochains by :
\begin{align*}
d^p\varphi(X_1,...,X_p,z)&= \sum_{j=1}^p \sum_{k=j+1}^p (-1)^j \varphi \left( \bar{\alpha}(X_1),...,\widehat{X_j},..., [X_j , X_k]_\alpha, ..., \bar{\alpha}(X_p), \alpha(z) \right)\\
&+ \sum_{j=1}^p (-1)^j \varphi \left( \bar{\alpha}(X_1),...,\widehat{X_j},...,\bar{\alpha}(X_p),X_j \cdot z \right). 
\end{align*}
This cohomology complex is called scalar cohomology complex.
\end{definition}

\begin{definition}[\cite{sheng:hom rep,n-ary hom rep}]
Let $(A,\bracket{\cdot,...,\cdot},\alpha)$ be a multiplicative $n$-Hom-Lie algebra. An $A$-valued $p$-cochain is a linear map $\varphi : (\wedge^{n-1}A)^{\otimes p-1}\wedge A \to A$ such that \[ \alpha\para{\varphi\para{x_1,...,x_n}} = \varphi\para{\alpha(x_1),...,\alpha(x_n)}. \]

 The coboundary operator for these cochains, for $p \geq 2$, is given by :
\begin{align*}
&d^p\varphi(X_1,...,X_p,z) = \sum_{j=1}^p \sum_{k=j+1}^p (-1)^j \varphi \left( \bar{\alpha}(X_1),...,\widehat{X_j},..., [X_j , X_k]_\alpha, ..., \bar{\alpha}(X_p),\alpha(z) \right)\\
&+ \sum_{j=1}^p (-1)^j \varphi \left( \bar{\alpha}(X_1),...,\widehat{X_j},...,\bar{\alpha}(X_p),X_j \cdot z \right) 
+ \sum_{j=1}^p (-1)^{j+1} \alpha^{p-1}(X_j) \cdot \varphi \left( X_1,...,\widehat{X_j},...,X_p, z \right) \\
&+ (-1)^{p-1} \left( \varphi (X_1,...,X_{p-1}, \quad) \cdot_\alpha X_p \right) \cdot \alpha^{p-1}(z),
\end{align*}
where \[ \varphi (X_1,...,X_{p-1}, \quad) \cdot_\alpha X_p = \sum_{i=1}^{n-1} \left(\alpha(x_p^1),...,\alpha(x_p^{i-1}),\varphi (X_1,...,X_{p-1},x_p^i), ...,\alpha(x_p^{n-1})\right). \]

This cohomology complex is called adjoint cohomology complex.
\end{definition}

The elements of $Z^p(A,A) = \ker d^p$ (resp. $Z^p(A,\mathbb{K})$) are called $p$-cocycles, those of $B^p(A,A)= \operatorname{Im} d^{p-1}$ (resp. $B^p(A,\mathbb{K})$) are called coboundaries. The quotient $H^p=\frac{Z^p}{B^p}$ is the $p$-th cohomology group. We sometimes add in subscript the representation used in the cohomology complex, for example $Z_{ad}^p(A,A)$ denotes the set of $p$-cocycle for the adjoint cohomology and $Z_{0}^p(A,\mathbb{K})$  denotes the set of $p$-cocycle for the scalar cohomology.


\subsection{$(n+1)$-Hom-Lie algebras induced by $n$-Hom-Lie algebras}

In \cite{ams:ternary} and \cite{ams:n}, the authors introduced a construction of a $3$-Hom-Lie algebra from a Hom-Lie algebra, and more generally an $(n+1)$-Hom-Lie algebra from an $n$-Hom-Lie algebra. It is called $(n+1)$-Hom-Lie algebra induced by $n$-Hom-Lie algebra, we recall here its definition and basic properties, and we look at the multiplicative case, we also give some more basic properties of these algebras.

\begin{definition}[\cite{ams:ternary, ams:n}]\label{def:phitau}
  Let $\phi:A^n\to A$ be an $n$-linear map and  $\tau : A \to \K$ be a linear form. Define $\phi_\tau:A^{n+1}\to A$ by
  \begin{align}
    \phi_\tau(x_1,...,x_{n+1}) = \sum_{k=1}^{n+1}(-1)^{k-1}\tau(x_k)\phi(x_1,...,\hat{x}_k,...,x_{n+1}),
  \end{align}
where the hat over $\hat{x}_k$ on the right hand side means that $x_{k}$ is excluded, that is 
 $\phi$ is calculated on $(x_1,..., x_{k-1}, x_{k+1}, ..., x_{n+1})$. 
\end{definition}
We will not be concerned with just any linear map $\tau$, but rather maps that have a generalized trace property. Namely

\begin{definition}[\cite{ams:ternary, ams:n}]\label{def:phitrace}
  For $\phi:A^n\to A$ we call a linear map $\tau:A\to\K$ a
  \emph{$\phi$-trace} if $\tau\para{\phi(x_1,...,x_n)}=0$ for all
  $x_1,...,x_n\in A$.
\end{definition}

\begin{lemma}[\cite{ams:ternary, ams:n}]\label{lemma:phitlinantisym}
Let $\phi:A^n\to A$ be a skew-symmetric $n$-linear map and $\tau$ a linear map $V\to\K$. Then $\phi_\tau$ is an $(n+1)$-linear skew-symmetric map. Furthermore, if $\tau$ is a $\phi$-trace then $\tau$ is a $\phi_\tau$-trace.
\end{lemma}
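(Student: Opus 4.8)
The plan is to verify the three claims of Lemma~\ref{lemma:phitlinantisym} in turn, each reducing to a direct computation from the defining formula in Definition~\ref{def:phitau}.

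First, $(n+1)$-linearity of $\phi_\tau$ is immediate: each summand $(-1)^{k-1}\tau(x_k)\phi(x_1,\dots,\hat{x}_k,\dots,x_{n+1})$ is a product of the linear form $\tau$ evaluated at one argument and the $n$-linear map $\phi$ evaluated at the remaining $n$ arguments, hence is $(n+1)$-linear in $(x_1,\dots,x_{n+1})$; a sum of $(n+1)$-linear maps is $(n+1)$-linear. So I would dispatch this in one sentence.

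Second, for skew-symmetry I would use that the symmetric group is generated by adjacent transpositions, so it suffices to show $\phi_\tau(\dots,x_i,x_{i+1},\dots) = -\phi_\tau(\dots,x_{i+1},x_i,\dots)$. Swapping $x_i$ and $x_{i+1}$ in the sum, I would split the terms into three groups: the term with $k=i$, the term with $k=i+1$, and the terms with $k\notin\{i,i+1\}$. For $k\notin\{i,i+1\}$, the factor $\tau(x_k)$ is unchanged, while $\phi(x_1,\dots,\hat{x}_k,\dots,x_{n+1})$ still contains both $x_i$ and $x_{i+1}$ as adjacent arguments, so the skew-symmetry of $\phi$ supplies a sign $-1$. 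For the two remaining terms, after the swap the $k=i$ term of the swapped expression matches (up to sign bookkeeping) the $k=i+1$ term of the original, and vice versa: transposing $x_i,x_{i+1}$ turns $(-1)^{i-1}\tau(x_i)\phi(\dots,\hat{x}_i,\dots)$ into $(-1)^{i-1}\tau(x_i)\phi(\dots,\hat{x}_i,\dots)$ sitting now in position $i+1$, which carries the index sign $(-1)^{(i+1)-1}=-(-1)^{i-1}$; matching these up shows the pair contributes an overall factor $-1$ as well. Care with the index parities in $(-1)^{k-1}$ is the only subtlety here, and it is routine.

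Third, the $\phi_\tau$-trace property: I must show $\tau\bigl(\phi_\tau(x_1,\dots,x_{n+1})\bigr)=0$ when $\tau$ is a $\phi$-trace. Applying $\tau$ (linear) to the defining sum gives $\sum_{k=1}^{n+1}(-1)^{k-1}\tau(x_k)\,\tau\bigl(\phi(x_1,\dots,\hat{x}_k,\dots,x_{n+1})\bigr)$, and every inner factor $\tau(\phi(\cdots))$ vanishes by the $\phi$-trace hypothesis, so the whole sum is $0$. I expect no real obstacle in this lemma; the only place demanding attention is the sign tracking in the skew-symmetry argument, and even that is a short finite check.
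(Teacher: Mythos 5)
Your proof is correct; the paper itself states this lemma without proof (it is quoted from \cite{ams:ternary, ams:n}), and your argument — multilinearity term by term, skew-symmetry reduced to adjacent transpositions with the two exceptional indices $k=i$ and $k=i+1$ exchanging roles, and the trace property by applying $\tau$ to the defining sum termwise — is exactly the standard direct verification one finds in those references. The sign bookkeeping in your second step does check out: after swapping $x_i$ and $x_{i+1}$, the $k=i$ term becomes $(-1)^{i-1}\tau(x_{i+1})\phi(x_1,\dots,\hat{x}_{i+1},\dots,x_{n+1})$, which is $-1$ times the $k=i+1$ term of the original sum (and symmetrically for the other member of the pair), while every term with $k\notin\{i,i+1\}$ acquires $-1$ from a single adjacent transposition inside $\phi$.
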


\begin{theorem}[\cite{ams:ternary, ams:n}]\label{thm:inducedhomnambulie}
Let $(A,\phi,\alpha_1,...,\alpha_{n-1})$ be an $n$-Hom-Lie algebra, $\tau$ a $\phi$-trace and $\alpha_n:A\to A$ a linear map. If it holds that
\begin{align}
 &\tau\para{\alpha_i(x)}\tau(y)=\tau(x)\tau\para{\alpha_i(y)}\label{eq:tautaualpha}\\
 &\tau\para{\alpha_i(x)}\alpha_j(y)=\alpha_i(x)\tau\para{\alpha_j(y)}\label{eq:taualphaalpha}
\end{align}
for all $i,j\in\{1,...,n\}$ and all $x,y\in A$, then $(A,\phi_\tau,\alpha_1,...,\alpha_n)$ is an $(n+1)$-Hom-Lie algebra. We shall say that $(A,\phi_\tau,\alpha_1,...,\alpha_n)$ is \emph{induced by $(A,\phi,\alpha_1,...,\alpha_{n-1})$}. We refer to $A$ when considering the given $n$-Hom-Lie algebra and $A_\tau$ when considering the induced $(n+1)$-Hom-Lie algebra.
\end{theorem}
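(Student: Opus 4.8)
The plan is to check the two axioms defining an $(n+1)$-Hom-Lie algebra for $(A,\phi_\tau,\alpha_1,\dots,\alpha_n)$: skew-symmetry of $\phi_\tau$, and the Hom-Nambu identity twisted by $\alpha_1,\dots,\alpha_n$. Skew-symmetry is exactly the content of Lemma~\ref{lemma:phitlinantisym}, which moreover tells us that $\tau$ is again a $\phi_\tau$-trace; together with the hypothesis that $\tau$ is a $\phi$-trace, this gives the tool used throughout: $\tau$ vanishes on every value of $\phi$ and on every value of $\phi_\tau$. So only the Hom-Nambu identity requires work.

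First I would expand the left-hand side $\phi_\tau\bigl(\alpha_1(x_1),\dots,\alpha_n(x_n),\phi_\tau(y_1,\dots,y_{n+1})\bigr)$ by Definition~\ref{def:phitau}. In the outer sum the term whose traced slot is the last one dies because $\tau$ kills $\phi_\tau$-brackets, and expanding the inner $\phi_\tau$ turns the left-hand side into a double sum over $k\in\{1,\dots,n\}$ and $l\in\{1,\dots,n+1\}$ with summand (up to the sign $(-1)^{k-1}(-1)^{l-1}$) equal to $\tau(\alpha_k(x_k))\,\tau(y_l)\,\phi\bigl(\alpha_1(x_1),\dots,\widehat{\alpha_k(x_k)},\dots,\alpha_n(x_n),\phi(y_1,\dots,\widehat{y_l},\dots,y_{n+1})\bigr)$. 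Then I would expand the right-hand side $\sum_{i=1}^{n+1}\phi_\tau\bigl(\alpha_1(y_1),\dots,[x_1,\dots,x_n,y_i]_{\phi_\tau},\dots,\alpha_n(y_{n+1})\bigr)$ the same way: in each outer $\phi_\tau$ the term whose traced slot is the inner bracket dies (again $\tau$ kills $\phi_\tau$-brackets), and expanding both the surviving outer terms and the inner bracket $[x_1,\dots,x_n,y_i]_{\phi_\tau}$ gives a sum of terms each carrying a product of two traces — one of an $x$ and one of a (possibly $\alpha$-twisted) $y$ — times a nested $\phi$-expression, plus the terms produced by the last summand $(-1)^{n}\tau(y_i)\phi(x_1,\dots,x_n)$ of the inner bracket.

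The key step is to match the two sides after regrouping. I would fix the pair (which $x$ is traced, which $y$ is traced) and use the compatibility conditions to put every surviving term in a normal form: relation \eqref{eq:taualphaalpha}, in the equivalent form $\tau(\alpha_i(x))\,\alpha_j(y)=\tau(\alpha_j(y))\,\alpha_i(x)$, lets one interchange the roles of "carrier of the trace" and "carrier of the twist" and thereby line up the twisting maps in front of the $n-1$ spectator arguments into the pattern $\alpha_1,\dots,\alpha_{n-1}$, while \eqref{eq:tautaualpha} reconciles the two scalar factors; one may alternatively note that \eqref{eq:taualphaalpha} forces every $\alpha_i$ to have rank at most one as soon as $\tau\circ\alpha_j\neq 0$ for some $j$, which collapses many terms by skew-symmetry of $\phi$. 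Once the twists are normalized, the Hom-Nambu identity of the given $n$-Hom-Lie algebra $(A,\phi,\alpha_1,\dots,\alpha_{n-1})$ — applied with the untraced $x$'s as the first block of entries and the untraced $y$'s as the second block — turns the single nested bracket on the left-hand side into the sum of nested brackets on the right-hand side; summing over the choice of traced variables finishes the proof.

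The hard part is entirely the combinatorial bookkeeping: tracking the signs $(-1)^{k-1}$, $(-1)^{l-1}$ and those introduced when an omitted argument is moved, and checking that after normalization via \eqref{eq:tautaualpha}--\eqref{eq:taualphaalpha} the right-hand-side terms assemble exactly into the expanded right-hand side of the $n$-ary Hom-Nambu identity. Special attention must be paid to the contribution $(-1)^{n}\tau(y_i)\phi(x_1,\dots,x_n)$ coming from the inner bracket: although $\tau$ annihilates $\phi(x_1,\dots,x_n)$, this element does occur as an (untraced) argument of the outer $\phi$, so the corresponding terms genuinely appear, and one must verify that they either cancel among themselves or combine with the remaining contributions, rather than being discarded.
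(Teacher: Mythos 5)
Your plan is correct and is essentially the paper's own approach: Theorem \ref{thm:inducedhomnambulie} is only cited here, but the proof of the analogous Theorem \ref{thm:inducedmul} proceeds exactly as you describe --- expand both sides, discard the terms where $\tau$ hits a $\phi_\tau$-bracket, convert the surviving double sum on the left into most of the right-hand side via the $n$-ary Hom-Nambu identity, and isolate the leftover terms carrying $\tau(y_j)\tau(y_k)$ with $\phi(x_1,\dots,x_n)$ as an untraced argument. The one point you explicitly leave open --- whether those leftover terms cancel among themselves --- is settled affirmatively by the paper's Lemma \ref{lemma:DSG}: pairing the $(j,k)$ term with the $(k,j)$ term and using the skew-symmetry of the outer bracket shows that the whole sum vanishes.
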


Now we look at the case of multiplicative $n$-Hom-Lie algebras, and provide a new construction theorem. We start with the following lemma:

\begin{lemma} \label{lemma:DSG}
Let $A,B$ be $\K$-vector spaces, $\phi : A^n \to A$ and $\psi : A^n \to B$ $n$-linear skew-symmetric maps and $\tau : A \to \K$ a linear form. Then we have, for all $x_1,...,x_n,y_1,...,y_{n+1}\in A$:
\[ S=\sum_{k=1}^{n+1} \sum_{j=1,j\neq k}^{n+1} (-1)^{j-1} \tau(y_j) \tau(y_k)  \psi\para{y_1,...,\widehat{y_j},...,y_{k-1},\phi\para{x_1,...,x_n},...,y_{n+1} } = 0,\]
where $\widehat{y_k}$ means that $y_k$ is omited.
\end{lemma}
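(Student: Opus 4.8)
The identity is purely combinatorial, and the plan is to show that the $n(n+1)$ summands of $S$ cancel in pairs. Write $c=\phi(x_1,...,x_n)\in A$; since $\phi$ enters $S$ only through this single value, neither its $n$-linearity nor its skew-symmetry plays a role, and only the skew-symmetry of $\psi$ will be used (linearity of $\tau$ is not needed either). For an ordered pair $(j,k)$ with $j\neq k$, denote by $T_{j,k}$ the corresponding summand; the $n$-tuple fed to $\psi$ in $T_{j,k}$ is obtained from $(y_1,...,y_{n+1})$ by deleting the entry in position $j$ and putting $c$ in place of $y_k$. A direct count of the entries preceding it shows that $c$ occupies position $k-1$ in this $n$-tuple when $j<k$, and position $k$ when $j>k$.

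First I would regroup the double sum over unordered pairs, $S=\sum_{1\le a<b\le n+1}(T_{a,b}+T_{b,a})$. Fix $a<b$. Both $T_{a,b}$ and $T_{b,a}$ carry the same scalar factor $\tau(y_a)\tau(y_b)$, and in both the entries fed to $\psi$, apart from $c$, are exactly the $y_i$ with $i\notin\{a,b\}$ in their natural order; thus the two $n$-tuples differ only in the slot holding $c$. By the count above, this slot is $b-1$ in $T_{a,b}$ (as $a<b$) and $a$ in $T_{b,a}$ (as $b>a$). Passing from the second $n$-tuple to the first amounts to transposing $c$ past the $b-1-a$ entries $y_{a+1},...,y_{b-1}$, so skew-symmetry of $\psi$ gives that the $\psi$-value in $T_{a,b}$ equals $(-1)^{b-1-a}$ times the $\psi$-value in $T_{b,a}$.

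It then only remains to reconcile the sign prefactors $(-1)^{j-1}$: this prefactor is $(-1)^{a-1}$ for $T_{a,b}$ and $(-1)^{b-1}$ for $T_{b,a}$, so
\[
T_{a,b}+T_{b,a}=\left((-1)^{a-1}(-1)^{b-1-a}+(-1)^{b-1}\right)\tau(y_a)\tau(y_b)\,\psi(\ldots)=\left((-1)^{b-2}+(-1)^{b-1}\right)\tau(y_a)\tau(y_b)\,\psi(\ldots)=0,
\]
where $\psi(\ldots)$ denotes the common $\psi$-value of $T_{b,a}$. Summing over all pairs $a<b$ yields $S=0$.

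The only delicate point is the bookkeeping in the second step: one must be careful that the slot occupied by $c$ in the $\psi$-argument genuinely depends on whether the deleted index $j$ lies before or after $k$, so that the parity $(-1)^{b-1-a}$ extracted from skew-symmetry is measured against the correct reference ordering. Once this is set up correctly, the rest is a routine reindexing and sign count.
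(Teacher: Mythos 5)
Your proof is correct: the sign bookkeeping for the position of $c=\phi(x_1,\ldots,x_n)$ (slot $b-1$ when the deleted index precedes $k$, slot $a$ otherwise) and the resulting cancellation $(-1)^{b-2}+(-1)^{b-1}=0$ all check out. This is essentially the paper's own argument — the paper splits the double sum into $j<k$ and $j>k$, relabels, and uses skew-symmetry of $\psi$ to shift $c$ between the two slots, which is exactly your pairwise cancellation written as a reindexing.
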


\begin{proof}
\begin{align*}
S &= \sum_{k=1}^{n+1} \sum_{j=1}^{k-1} (-1)^{j-1} \tau(y_j) \tau(y_k)  \psi\para{y_1,...,\widehat{y_j},...,y_{k-1},\phi\para{x_1,...,x_n},...,y_{n+1}} \\
&+  \sum_{k=1}^{n+1} \sum_{j=k+1}^{n+1} (-1)^{j-1} \tau(y_j) \tau(y_k)  \psi\para{y_1,...,\widehat{y_j},...,y_{k-1},\phi\para{x_1,...,x_n},...,y_{n+1}} \\
&=  \sum_{k=1}^{n+1} \sum_{j=1}^{k-1} (-1)^{j-1} \tau(y_j) \tau(y_k)  \psi\para{y_1,...,\widehat{y_j},...,y_{k-1},\phi\para{x_1,...,x_n},...,y_{n+1}} \\
&+  \sum_{k=1}^{n+1} \sum_{j=1}^{k-1} (-1)^{k-1} \tau(y_j) \tau(y_k)  \psi\para{y_1,...,\widehat{y_k},...,y_{j-1},\phi\para{x_1,...,x_n},...,y_{n+1}} \\
&=  \sum_{k=1}^{n+1} \sum_{j=1}^{k-1} (-1)^{j-1} \tau(y_j) \tau(y_k)  \psi\para{y_1,...,\widehat{y_j},...,y_{k-1},\phi\para{x_1,...,x_n},...,y_{n+1}} \\
&+  \sum_{k=1}^{n+1} \sum_{j=1}^{k-1} (-1)^{j} \tau(y_j) \tau(y_k)  \psi\para{y_1,...,\widehat{y_j},...,y_{k-1},\phi\para{x_1,...,x_n},...,y_{n+1}}\\
&= 0
\end{align*}
\end{proof}

Now we give conditions under which an $n$-Hom-Lie algebras morphism is still a morphism of the induced $(n+1)$-Hom-Lie algebras:
\begin{proposition}\label{morph_tau}
Let $(A,\phi,\alpha_1,...,\alpha_{n-1})$ and $(B,\psi,\beta_1,...,\beta_{n-1})$ be $n$-Hom-Lie algebras. Let $\tau$ (resp. $\sigma$) be a $\phi$-trace (resp. $\psi$-trace) and $\alpha_n$ (resp. $\beta_n$) a linear map $\alpha :A\to A$ (resp. $\beta_n : B \to B$). Set $(A,\phi_\tau,\alpha_1,...,\alpha_n)$ (resp. $(B,\psi_\sigma,\beta_1,...,\beta_n)$) to be the induced $(n+1)$-Hom-Lie algebra. Let $f : A \to B$ be an $n$-Hom-Lie algebra homomorphism satisfying $\sigma \circ f = \tau$ and $f\circ \alpha_n = \beta_n \circ f$, then $f$ is an $(n+1)-$Hom-Lie algebra homomorphism of the induced algebras.
\end{proposition}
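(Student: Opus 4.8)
The plan is to verify the two conditions of the $(n+1)$-Hom-Lie algebra morphism directly from the corresponding conditions that $f$ already satisfies as an $n$-Hom-Lie algebra morphism, namely $f\circ\phi = \psi\circ f^{\times n}$ and $f\circ\alpha_i = \beta_i\circ f$ for $1\le i\le n-1$, together with the two new hypotheses $\sigma\circ f = \tau$ and $f\circ\alpha_n = \beta_n\circ f$. The second morphism condition is immediate: we must check $f\circ\alpha_i = \beta_i\circ f$ for all $i$ from $1$ to $n$, and this holds for $i\le n-1$ by assumption and for $i=n$ by the added hypothesis, so nothing is left to do there.

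The substance is the first condition, $f\para{\phi_\tau(x_1,\dots,x_{n+1})} = \psi_\sigma\para{f(x_1),\dots,f(x_{n+1})}$ for all $x_1,\dots,x_{n+1}\in A$. I would expand the left-hand side using Definition \ref{def:phitau}:
\[
f\para{\phi_\tau(x_1,\dots,x_{n+1})} = \sum_{k=1}^{n+1}(-1)^{k-1}\tau(x_k)\, f\para{\phi(x_1,\dots,\hat{x}_k,\dots,x_{n+1})}.
\]
Now apply linearity of $f$ (the scalar $\tau(x_k)$ pulls out) and the multiplicativity of $f$ with respect to the $n$-ary brackets, $f\circ\phi = \psi\circ(f\times\cdots\times f)$, to rewrite each term as $(-1)^{k-1}\tau(x_k)\,\psi\para{f(x_1),\dots,\widehat{f(x_k)},\dots,f(x_{n+1})}$. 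Then replace $\tau(x_k)$ by $\sigma\para{f(x_k)}$ using the hypothesis $\sigma\circ f = \tau$. The resulting expression is precisely $\sum_{k=1}^{n+1}(-1)^{k-1}\sigma\para{f(x_k)}\,\psi\para{f(x_1),\dots,\widehat{f(x_k)},\dots,f(x_{n+1})}$, which by Definition \ref{def:phitau} applied to the pair $(\psi,\sigma)$ on the elements $f(x_1),\dots,f(x_{n+1})\in B$ equals $\psi_\sigma\para{f(x_1),\dots,f(x_{n+1})}$, as desired.

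This argument is essentially a bookkeeping computation, and I do not anticipate a genuine obstacle; the only point requiring a little care is keeping the sign $(-1)^{k-1}$ and the deleted-index positions aligned correctly when passing from the $A$-side sum to the $B$-side sum, so that the two applications of Definition \ref{def:phitau} match term by term. Note also that Lemma \ref{lemma:phitlinantisym} guarantees that $\phi_\tau$ and $\psi_\sigma$ are well-defined skew-symmetric $(n+1)$-linear maps (and that $\sigma$ is a $\psi_\sigma$-trace), and Theorem \ref{thm:inducedhomnambulie} that the induced structures are genuine $(n+1)$-Hom-Lie algebras, so the statement is not vacuous; but neither of these facts is actually needed in the verification itself, which only uses the formula defining $\phi_\tau$ and the morphism properties of $f$.
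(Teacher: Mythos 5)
Your proposal is correct and follows essentially the same route as the paper's own proof: expand $f\para{\phi_\tau(x_1,\dots,x_{n+1})}$ by the definition of $\phi_\tau$, use $f\circ\phi=\psi\circ f^{\times n}$ and $\sigma\circ f=\tau$ to convert each term, and reassemble the sum as $\psi_\sigma\para{f(x_1),\dots,f(x_{n+1})}$, with the twisting-map conditions handled exactly as you do. No discrepancies to report.
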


\begin{proof}
For all $x_1,...,x_{n+1} \in A$, we have:
\begin{align*}
f\para{\phi_\tau\para{x_1,...,x_{n+1}}} &= \sum_{i=1}^{n+1} (-1)^{i-1} \tau(x_i)f\para{\phi\para{x_1,...,x_{i-1},x_{i+1},...,x_{n+1}}}\\
&=  \sum_{i=1}^{n+1} (-1)^{i-1} \tau(x_i)\psi\para{f(x_1),...,f(x_{i-1}),f(x_{i+1}),...,f(x_{n+1})}\\
&=  \sum_{i=1}^{n+1} (-1)^{i-1} \sigma(f(x_i))\psi\para{f(x_1),...,f(x_{i-1}),f(x_{i+1}),...,f(x_{n+1})}\\
&=  \psi_\sigma\para{f(x_1),...,f(x_{n+1})}.
\end{align*}
We have $f\circ \alpha_i = \beta_i \circ f, \forall 1\leq i \leq n-1,$
because $f$ is an $n$-Hom-Lie algebra homomorphism, and we also have
$f\circ \alpha_n = \beta_n \circ f.$
This means that $f$ is  an $(n+1)$-Hom-Lie homomorphism of $A_\tau$ and $B_\sigma$.
\end{proof}

The new theorem for constructing $(n+1)$-Hom-Lie algebras induced by $n$-Hom-Lie algebras can be formulated as follows:

\begin{theorem}\label{thm:inducedmul}
Let $(A,\phi,\alpha)$ be an $n$-Hom-Lie algebra and $\tau$ a $\phi$-trace. If $\tau \circ \alpha = \tau$ then $(A,\phi_\tau,\alpha_1,...,\alpha_{n})$ is an $(n+1)$-Hom-Lie algebra. Moreover, if $(A,\phi,\alpha)$ is a multiplicative $n$-Hom-Lie algebra, then, under the same condition, $(A,\phi_\tau,\alpha)$ is a multiplicative $(n+1)$-Hom-Lie algebra.
\end{theorem}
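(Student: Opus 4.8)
The plan is to deduce Theorem~\ref{thm:inducedmul} from the already-established Theorem~\ref{thm:inducedhomnambulie} by checking that the hypotheses of the latter are satisfied in the special case $\alpha_1=\dots=\alpha_{n-1}=\alpha_n=\alpha$. So first I would set $\alpha_i=\alpha$ for all $i\in\{1,\dots,n\}$ and verify the two compatibility conditions \eqref{eq:tautaualpha} and \eqref{eq:taualphaalpha}. For \eqref{eq:tautaualpha}, using $\tau\circ\alpha=\tau$ gives $\tau(\alpha_i(x))\tau(y)=\tau(x)\tau(y)=\tau(x)\tau(\alpha_i(y))$ immediately. For \eqref{eq:taualphaalpha}, the same substitution gives $\tau(\alpha_i(x))\alpha_j(y)=\tau(x)\alpha_j(y)=\alpha_i(x)\,\text{(scalar?)}$\,---\,here one must read the right-hand side of \eqref{eq:taualphaalpha} correctly: it is $\alpha_j(y)\,\tau(\alpha_i(x))$ re-grouped, i.e. the identity $\tau(\alpha_i(x))\alpha_j(y)=\tau(\alpha_j(y))\alpha_i(x)$ reduces under $\tau\circ\alpha=\tau$ to $\tau(x)\alpha_j(y)=\tau(y)\alpha_i(x)$, which need not hold for arbitrary $x,y$. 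This is the subtle point, so I expect the actual argument does \emph{not} simply invoke Theorem~\ref{thm:inducedhomnambulie} verbatim but instead re-runs the Hom-Nambu identity verification directly, exploiting that all the twisting maps coincide; that is the route I would take.

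Concretely, I would verify the Hom-Nambu identity for $(A,\phi_\tau,\alpha,\dots,\alpha)$ directly. Write out both sides of the $(n+1)$-ary Hom-Nambu identity with bracket $\phi_\tau$ and twisting map $\alpha$, expand each occurrence of $\phi_\tau$ via Definition~\ref{def:phitau} into a $\tau$-weighted alternating sum of $\phi$'s, and collect terms. The terms fall into two families: those in which the ``inner'' slot of an outer $\phi$ is itself a $\phi$-bracket of $x$'s, and those in which it is a single $\alpha(y_i)$ or $\alpha(x_j)$. For the first family, the fact that $\tau$ is a $\phi$-trace (Lemma~\ref{lemma:phitlinantisym} tells us $\tau$ is also a $\phi_\tau$-trace) kills the terms where a $\tau$ is evaluated on an inner bracket; the remaining cross-terms carrying two $\tau$-factors on the $y$'s cancel in pairs\,---\,this is precisely the combinatorial identity packaged in Lemma~\ref{lemma:DSG} (applied with $\psi=\phi$ or with $B=A$, $\psi$ the relevant multilinear map). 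For the second family, one is left with expressions of the shape $\tau(y_j)\,[\text{Hom-Nambu identity for }\phi]$, which vanish because $(A,\phi,\alpha)$ is itself an $n$-Hom-Lie algebra; the condition $\tau\circ\alpha=\tau$ is what lets me pull the $\tau$ through the $\alpha$'s so the weights on the two sides match up.

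For the multiplicativity claim, assume in addition that $\alpha$ is an algebra morphism for $\phi$, i.e. $\alpha(\phi(x_1,\dots,x_n))=\phi(\alpha(x_1),\dots,\alpha(x_n))$. Then I would compute
\[
\alpha\bigl(\phi_\tau(x_1,\dots,x_{n+1})\bigr)
=\sum_{k=1}^{n+1}(-1)^{k-1}\tau(x_k)\,\alpha\bigl(\phi(x_1,\dots,\hat{x}_k,\dots,x_{n+1})\bigr),
\]
push $\alpha$ inside each $\phi$ by multiplicativity of $\alpha$ on $A$, and use $\tau(x_k)=\tau(\alpha(x_k))$ to rewrite each scalar weight, obtaining exactly $\phi_\tau(\alpha(x_1),\dots,\alpha(x_{n+1}))$. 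This shows $\alpha$ is an algebra morphism for $\phi_\tau$, hence $(A,\phi_\tau,\alpha)$ is multiplicative.

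The main obstacle is bookkeeping: correctly indexing the double sums that arise when $\phi_\tau$ is substituted into both sides of the $(n+1)$-ary Hom-Nambu identity, and recognizing which partial sums are instances of Lemma~\ref{lemma:DSG} and which reduce to the $n$-ary Hom-Nambu identity for $\phi$. I would organize the computation by first isolating all terms with two $\tau$-factors (handled by Lemma~\ref{lemma:DSG}, after possibly also using that $\tau$ is a $\phi_\tau$-trace to discard terms with $\tau$ on an inner bracket), and then showing the single-$\tau$ terms regroup, summand by summand in the index of the surviving $\tau$, into the Hom-Nambu identity for the original bracket $\phi$; the hypothesis $\tau\circ\alpha=\tau$ enters only to align the $\alpha$-weights so that this regrouping is exact.
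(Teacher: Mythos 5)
Your proposal is correct and follows essentially the same route as the paper: a direct verification of the $(n+1)$-ary Hom-Nambu identity in which the terms with $\tau$ on an inner bracket vanish by the $\phi_\tau$-trace property, the double-$\tau(y)$ cross-terms vanish by Lemma~\ref{lemma:DSG}, the remaining terms reduce to the $n$-ary Hom-Nambu identity for $\phi$ via $\tau\circ\alpha=\tau$, and your preliminary observation that Theorem~\ref{thm:inducedhomnambulie} cannot be invoked directly is exactly the paper's own Remark following the theorem. The only cosmetic difference is that for multiplicativity the paper cites Proposition~\ref{morph_tau} with $f=\alpha$ rather than redoing the (identical) computation you write out.
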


\begin{proof}
We know, by Lemma \ref{lemma:phitlinantisym}, that $\phi_\tau$ is an $(n+1)$-linear skew-symmetric map. We show that it satisfies the Hom-Nambu identity. Let $L$ be its left-hand side, and $R$ its right-hand side:
\begin{small}
\begin{align*}
L &= \phi_\tau\para{\alpha(x_1),...,\alpha(x_n),\phi_\tau\para{y_1,...,y_{n+1}}}\\
&= \sum_{i=1}^n (-1)^{i-1} \tau(\alpha(x_i)) \phi\para{\alpha(x_1),...,\widehat{\alpha(x_i)},...,\alpha(x_n),\phi_\tau\para{y_1,...,y_{n+1}}}\\
&= \sum_{i=1}^n \sum_{j=1}^{n+1}  (-1)^{i+j} \tau(\alpha(x_i))\tau(y_j) \phi\para{\alpha(x_1),...,\widehat{\alpha(x_i)},...,\alpha(x_n),\phi\para{y_1,...,,\widehat{y_j},...,y_{n+1}}}\\
&= \sum_{i=1}^n \sum_{j=1}^{n+1} \sum_{k=1 ; k\neq j}^{n+1}  (-1)^{i+j} \tau(\alpha(x_i))\tau(y_j)  \phi\para{\alpha(y_1),...,\widehat{\alpha(y_j)},...,\phi\para{x_1,...,\widehat{x_i},...,x_n,y_k},...,\alpha(y_{n+1})}\\
&= \sum_{i=1}^n \sum_{j=1}^{n+1} \sum_{k=1 ; k\neq j}^{n+1}  (-1)^{i+j} \tau(x_i)\tau(y_j)  \phi\para{\alpha(y_1),...,\widehat{\alpha(y_j)},...,\phi\para{x_1,...,\widehat{x_i},...,x_n,y_k},...,\alpha(y_{n+1})}
\end{align*}

\begin{align*}
R &= \sum_{k=1}^{n+1}  \phi_\tau\para{\alpha(y_1),...,\phi_\tau\para{x_1,...,x_n,y_k},...,\alpha(y_{n+1})} \\
&=\sum_{k=1}^{n+1} \sum_{j=1,j\neq k}^{n+1} (-1)^{j-1} \tau(\alpha(y_j))  \phi\para{\alpha(y_1),...,\widehat{\alpha(y_j)},...,\phi_\tau\para{x_1,...,x_n,y_k},...,\alpha(y_{n+1})} \\
&=\sum_{k=1}^{n+1} \sum_{j=1,j\neq k}^{n+1} \sum_{i=1}^n (-1)^{i+j} \tau(\alpha(y_j)) \tau(x_i) \phi\para{\alpha(y_1),...,\widehat{\alpha(y_j)},...,\phi\para{x_1,...,\widehat{x_i},...,x_n,y_k},...,\alpha(y_{n+1})} \\
&+ (-1)^n \sum_{k=1}^{n+1} \sum_{j=1,j\neq k}^{n+1} (-1)^{j-1} \tau(\alpha(y_j)) \tau(y_k)  \phi\para{\alpha(y_1),...,\widehat{\alpha(y_j)},...,\phi\para{x_1,...,x_n},...,\alpha(y_{n+1})} \\
&= \sum_{i=1}^n \sum_{j=1}^{n+1} \sum_{k=1,k\neq j}^{n+1}  (-1)^{i+j} \tau(y_j) \tau(x_i) \phi\para{\alpha(y_1),...,\widehat{\alpha(y_j)},...,\phi\para{x_1,...,\widehat{x_i},...,x_n,y_k},...,\alpha(y_{n+1})} \\
&+ (-1)^n \sum_{k=1}^{n+1} \sum_{j=1,j\neq k}^{n+1} (-1)^{j-1} \tau(y_j) \tau(y_k)  \phi\para{\alpha(y_1),...,\widehat{\alpha(y_j)},...,\alpha(y_{k-1}),\phi\para{x_1,...,x_n},...,\alpha(y_{n+1})} \\
&= L + (-1)^n \sum_{k=1}^{n+1} \sum_{j=1,j\neq k}^{n+1} (-1)^{j-1} \tau(y_j) \tau(y_k)  \phi\para{\alpha(y_1),...,\widehat{\alpha(y_j)},...,\alpha(y_{k-1}),\phi\para{x_1,...,x_n},...,\alpha(y_{n+1})} \\
&= L. \qquad \text{ (by Lemma \ref{lemma:DSG})}
\end{align*}
\end{small}
We also have, in the case of a multiplicative algebra, by Proposition \ref{morph_tau} that if $\alpha$ is an endomorphism of $(A,\phi,\alpha)$ then $\alpha$ is an endomorphism of $(A,\phi_\tau,\alpha)$. That is $(A,\phi_\tau,\alpha)$ is multiplicative.
\end{proof}

\begin{remark}
This construction is not a particular case of Theorem \ref{thm:inducedhomnambulie}. Indeed, if we assume that $\alpha_1=...=\alpha_n=\alpha$ and the hypotheses of Theorem \ref{thm:inducedmul}, conditions \ref{eq:tautaualpha} and \ref{eq:taualphaalpha} are equivalent to $\alpha\para{\tau(x)y}=\alpha\para{\tau(y)x}$, which forces the algebra to be $1$-dimensional in case $\alpha$ is injective.
\end{remark}

The following proposition shows that the algebra obtained by applying Theorem \ref{thm:inducedmul} and Proposition \ref{twist}, when possible, in any order is the same.

\begin{proposition}
Let $\para{A,\bracket{\cdot,...,\cdot}}$ be an $n$-Lie algebra, $\alpha : A \to A$ an algebra morphism and $\tau : A \to \K$ such that $\para{A,\bracket{\cdot,...,\cdot}_\alpha,\alpha}$ (Proposition \ref{twist}) and $\tau$ satisfies conditions of Theorem \ref{thm:inducedmul}. Then we have $\bracket{\cdot,...,\cdot}_{\alpha,\tau} = \bracket{\cdot,...,\cdot}_{\tau,\alpha}.$
\end{proposition}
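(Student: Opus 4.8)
The plan is to unwind the two composite brackets into explicit $(n+1)$-linear maps on $A$ and observe that they are literally the same map; the only content is that the linear twisting map $\alpha$ commutes with the alternating $\tau$-weighted sum defining the induced bracket.

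First I would settle the bookkeeping that makes both sides meaningful. By the Corollary following Proposition~\ref{twist}, $\para{A,\bracket{\cdot,...,\cdot}_\alpha,\alpha}$ is a multiplicative $n$-Hom-Lie algebra, and the assumption that $\tau$ satisfies the conditions of Theorem~\ref{thm:inducedmul} with respect to it unpacks to $\tau\circ\alpha=\tau$ together with $\tau\para{\alpha\para{\bracket{x_1,...,x_n}}}=0$ for all $x_1,...,x_n$; using $\tau\circ\alpha=\tau$, the latter says precisely that $\tau$ is a $\bracket{\cdot,...,\cdot}$-trace on the $n$-Lie algebra $A$. Hence: Theorem~\ref{thm:inducedmul} yields the $(n+1)$-Hom-Lie bracket $\bracket{\cdot,...,\cdot}_{\alpha,\tau}=\para{\bracket{\cdot,...,\cdot}_\alpha}_\tau$; the trace property gives the induced $(n+1)$-Lie bracket $\bracket{\cdot,...,\cdot}_\tau$ on $A$; and since $\alpha$ is an algebra morphism of $\para{A,\bracket{\cdot,...,\cdot}}$ with $\tau\circ\alpha=\tau$, it is also an algebra morphism of $\para{A,\bracket{\cdot,...,\cdot}_\tau}$, so the Corollary following Proposition~\ref{twist} (applied with $n$ replaced by $n+1$) yields $\bracket{\cdot,...,\cdot}_{\tau,\alpha}=\para{\bracket{\cdot,...,\cdot}_\tau}_\alpha$.

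Then I would just compute both sides. Using Definition~\ref{def:phitau} with the $n$-linear map $\bracket{\cdot,...,\cdot}_\alpha=\alpha\circ\bracket{\cdot,...,\cdot}$,
\begin{align*}
\bracket{x_1,...,x_{n+1}}_{\alpha,\tau}
&= \sum_{k=1}^{n+1}(-1)^{k-1}\tau(x_k)\,\bracket{x_1,...,\widehat{x_k},...,x_{n+1}}_\alpha\\
&= \sum_{k=1}^{n+1}(-1)^{k-1}\tau(x_k)\,\alpha\para{\bracket{x_1,...,\widehat{x_k},...,x_{n+1}}},
\end{align*}
while, applying Definition~\ref{def:phitau} to $\bracket{\cdot,...,\cdot}$ and then twisting by $\alpha$ as in Proposition~\ref{twist},
\begin{align*}
\bracket{x_1,...,x_{n+1}}_{\tau,\alpha}
&= \alpha\para{\bracket{x_1,...,x_{n+1}}_\tau}
= \alpha\para{\sum_{k=1}^{n+1}(-1)^{k-1}\tau(x_k)\,\bracket{x_1,...,\widehat{x_k},...,x_{n+1}}}.
\end{align*}
Because $\alpha$ is linear and every $\tau(x_k)\in\K$ is a scalar, $\alpha$ distributes over the finite sum and over the scalar factors, turning the last expression into the right-hand side of the preceding display. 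Thus $\bracket{\cdot,...,\cdot}_{\alpha,\tau}=\bracket{\cdot,...,\cdot}_{\tau,\alpha}$.

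There is no real computational obstacle here — the equality of the two multilinear maps follows at once from linearity of $\alpha$. The only point that warrants care, and that I would write out explicitly, is the compatibility bookkeeping of the second paragraph: one must check that the single hypothesis ``$\tau$ is a trace of the relevant bracket and $\tau\circ\alpha=\tau$'' is exactly what makes \emph{both} orders of applying Proposition~\ref{twist} and Theorem~\ref{thm:inducedmul} legitimate.
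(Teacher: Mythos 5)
Your computation is exactly the paper's proof: expand $\bracket{\cdot,...,\cdot}_{\alpha,\tau}$ via Definition~\ref{def:phitau} applied to $\alpha\circ\bracket{\cdot,...,\cdot}$, pull $\alpha$ out of the alternating $\tau$-weighted sum by linearity, and recognize the result as $\alpha\para{\bracket{\cdot,...,\cdot}_\tau}=\bracket{\cdot,...,\cdot}_{\tau,\alpha}$. The extra bookkeeping you supply (checking that $\tau\circ\alpha=\tau$ makes $\tau$ a trace for both brackets and $\alpha$ a weak morphism of the induced structure, so both composites are legitimately defined) is correct and is a reasonable addition that the paper leaves implicit.
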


\begin{proof}
Let $x_1,...,x_{n+1} \in A$, we have:
\begin{align*}
\bracket{x_1,...,x_{n+1}}_{\alpha,\tau}&= \sum_{i=1}^{n+1} (-1)^{i-1} \tau(x_i) \bracket{x_1,...,\widehat{x_i},...,x_{n+1}}_\alpha \\
&
= \sum_{i=1}^{n+1} (-1)^{i-1} \tau(x_i) \alpha \para{\bracket{x_1,...,\widehat{x_i},...,x_{n+1}}} \\
&=\alpha \para{\sum_{i=1}^{n+1} (-1)^{i-1} \tau(x_i) \bracket{x_1,...,\widehat{x_i},...,x_{n+1}}}\\
&=\alpha \para{\bracket{x_1,...,x_{n+1}}_\tau}
=\bracket{x_1,...,x_{n+1}}_{\tau,\alpha}.
\end{align*}

\end{proof}

Now we give two results about subalgebras and ideals of $(n+1)$-Hom-Lie algebras induced by $n$-Hom-Lie algebras.Let $(A,\bracket{\cdot,...,\cdot},\alpha_1,...,\alpha_{n-1})$ be an $n$-Hom-Lie algebra, $\tau$ a trace, $\alpha_n : A \to A$ a linear map and $(A,\bracket{\cdot,...,\cdot}_\tau, \alpha_1,...,\alpha_n)$ the induced $(n+1)$-Hom-Lie algebra.
\begin{proposition}
Let $B$ be a subalgebra of $A$. If $\alpha_n(B) \subseteq B$ then $B$ is also a subalgebra of $A_\tau$.
\end{proposition}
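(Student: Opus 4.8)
The plan is to simply verify the two defining conditions for $B$ to be a subalgebra of the induced $(n+1)$-Hom-Lie algebra $(A,\bracket{\cdot,...,\cdot}_\tau,\alpha_1,...,\alpha_n)$, using only that $B$ is already a subalgebra of $(A,\bracket{\cdot,...,\cdot},\alpha_1,...,\alpha_{n-1})$ together with the extra hypothesis $\alpha_n(B)\subseteq B$.

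First I would check the invariance condition: a subalgebra of $A_\tau$ must satisfy $\alpha_i(B)\subseteq B$ for all $i$ with $1\leq i\leq n$. Since $B$ is a subalgebra of $A$, we already have $\alpha_i(B)\subseteq B$ for $1\leq i\leq n-1$; the remaining case $i=n$ is precisely the hypothesis. Next I would check closure under the bracket $\bracket{\cdot,...,\cdot}_\tau$. For $x_1,\dots,x_{n+1}\in B$, expand
\[
\bracket{x_1,...,x_{n+1}}_\tau = \sum_{k=1}^{n+1}(-1)^{k-1}\tau(x_k)\bracket{x_1,...,\widehat{x_k},...,x_{n+1}}.
\]
Each term $\bracket{x_1,...,\widehat{x_k},...,x_{n+1}}$ is the value of the original $n$-ary bracket on $n$ elements of $B$, hence lies in $B$ because $B$ is a subalgebra of $A$. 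Multiplying by the scalar $\tau(x_k)\in\K$ keeps it in $B$ (as $B$ is a subspace), and summing finitely many elements of $B$ stays in $B$. Therefore $\bracket{x_1,...,x_{n+1}}_\tau\in B$, which completes the verification.

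There is essentially no obstacle here: the argument is a direct unwinding of the definition of the induced bracket in Definition~\ref{def:phitau} and of the notion of subalgebra, and it uses no property of $\tau$ beyond its being a linear form (in particular the $\phi$-trace property and the compatibility conditions~\eqref{eq:tautaualpha}--\eqref{eq:taualphaalpha} are not needed). If anything deserves a remark, it is only that the hypothesis $\alpha_n(B)\subseteq B$ cannot be dropped, since $\alpha_n$ is genuinely new data not controlled by $B$ being a subalgebra of $A$.
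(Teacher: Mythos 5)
Your proof is correct and follows exactly the same route as the paper's: verify $\alpha_i(B)\subseteq B$ for $1\leq i\leq n$ (the first $n-1$ cases from $B$ being a subalgebra of $A$, the last from the hypothesis), then observe that $\bracket{x_1,\dots,x_{n+1}}_\tau$ is a linear combination of values of the original bracket on elements of $B$ and hence lies in $B$. Your closing remarks on not needing the $\phi$-trace property are accurate, though the paper does not comment on this.
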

\begin{proof}
Let $B$ be a subalgebra of $(A,\bracket{\cdot,...,\cdot},\alpha_1,...,\alpha_{n-1})$. We have that $\alpha_i(B) \subseteq B$ for $1 \leq i \leq n-1$ because $B$ is a subalgebra of A, and $\alpha_n(B) \subseteq B$, then $\alpha_i(B) \subseteq B$ for $1 \leq i \leq n$.

Now let $x_1,...,x_{n+1} \in B$:
\[ \bracket{x_1,...,x_{n+1}}_\tau = \sum_{i=1}^{n+1} (-1)^{i-1}\tau(x_i)\bracket{x_1,...,\hat{x}_{i},...,x_{n+1}},\]
which is a linear combination of elements of $B$ and then belongs to $B$.
\end{proof}

\begin{proposition}
Let $J$ be an ideal of  $A$. If $\alpha_n(J)\subseteq J$, then $J$ is an ideal of $A_\tau$ if and only if
\[   \bracket{A,...,A} \subseteq J \text{ or } J \subseteq \ker \tau. \]
\end{proposition}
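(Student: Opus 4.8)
# Proof Proposal

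The plan is to prove both implications directly from the definition of an ideal of the induced $(n+1)$-Hom-Lie algebra $A_\tau$. The condition $\alpha_i(J) \subseteq J$ for $1 \leq i \leq n-1$ holds because $J$ is an ideal of $A$, and $\alpha_n(J) \subseteq J$ is assumed; so the only thing to analyze is when the induced bracket $\bracket{x_1,\dots,x_n,y}_\tau$ lies in $J$ for all $x_1,\dots,x_n \in A$ and $y \in J$. Expanding the definition,
\begin{align*}
\bracket{x_1,\dots,x_n,y}_\tau &= \sum_{i=1}^{n}(-1)^{i-1}\tau(x_i)\bracket{x_1,\dots,\hat{x}_i,\dots,x_n,y} \\
&\quad + (-1)^{n}\tau(y)\bracket{x_1,\dots,x_n}.
\end{align*}
Every term in the first sum lies in $J$ since $J$ is an ideal of $A$ and $y \in J$. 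Hence $\bracket{x_1,\dots,x_n,y}_\tau \in J$ for all such arguments if and only if $(-1)^n\tau(y)\bracket{x_1,\dots,x_n} \in J$ for all $x_1,\dots,x_n \in A$ and all $y \in J$.

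For the ``if'' direction: if $\bracket{A,\dots,A}\subseteq J$ then $\tau(y)\bracket{x_1,\dots,x_n}\in J$ trivially, and if $J \subseteq \ker\tau$ then $\tau(y)=0$ for $y \in J$, so again the extra term vanishes; in either case $J$ is an ideal of $A_\tau$. For the ``only if'' direction, suppose $J$ is an ideal of $A_\tau$ but $J \not\subseteq \ker\tau$: pick $y_0 \in J$ with $\tau(y_0)\neq 0$. Then for every $x_1,\dots,x_n \in A$ the element $(-1)^n\tau(y_0)\bracket{x_1,\dots,x_n}$ lies in $J$, and dividing by the nonzero scalar $\tau(y_0)$ (we are over a field) gives $\bracket{x_1,\dots,x_n}\in J$. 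Since the $x_i$ were arbitrary, $\bracket{A,\dots,A}\subseteq J$, which establishes the disjunction.

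I do not expect a serious obstacle here; the argument is essentially a bookkeeping exercise once the induced bracket is expanded and the $n$ ``inside'' terms are recognized as lying in $J$ automatically. The only point requiring a little care is the separation of the last term $\tau(y)\bracket{x_1,\dots,x_n}$ from the rest and the observation that it is the unique obstruction, together with the use of the field hypothesis (characteristic $0$, in particular nonzero scalars are invertible) to cancel $\tau(y_0)$ in the converse. One should also note at the outset that the case $J = \{0\}$ is vacuous and that when $\bracket{A,\dots,A}\subseteq J$ the first disjunct already holds, so the two conditions in the statement are genuinely a logical ``or'' and not mutually exclusive.
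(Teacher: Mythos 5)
Your proof is correct and follows essentially the same route as the paper: expand $\bracket{x_1,\dots,x_n,y}_\tau$, observe that the first $n$ terms lie in $J$ automatically, and reduce everything to the single obstruction term $\tau(y)\bracket{x_1,\dots,x_n}$. If anything, you are slightly more careful than the paper on the quantifier step in the converse (fixing $y_0$ with $\tau(y_0)\neq 0$ and cancelling the scalar to pass from the pointwise disjunction to the global one), which the paper leaves implicit.
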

\begin{proof}
Let $J$ be an ideal of $A$, satisfying $\alpha_n(J)\subseteq J$, we have that $\alpha_i(J)\subseteq J$ for $1 \leq i \leq n$. Consider $j \in J$ and $x_1,...,x_n \in A$, then we have:
\[ \bracket{x_1,...,x_n,j}_\tau = \sum_{i=1}^n (-1)^{i-1} \tau(x_i) \bracket{x_1,...,\hat{x}_i,...,x_n,j} + (-1)^n \tau(j) \bracket{x_1,...,x_n}. \]
We have $\sum_{i=1}^n (-1)^{i-1} \tau(x_i) \bracket{x_1,...,\hat{x}_i,...,x_n,j} \in J$, then, to obtain $\left. \bracket{x_1,...,x_n,j}_\tau \in J \right.$ it is necessary and sufficient to have $\tau(j) \bracket{x_1,...,x_n} \in J$, which is equivalent to $\tau(j) = 0$ or $\bracket{x_1,...,x_n} \in J$. 
\end{proof}


\section{Solvability and nilpotency of $(n+1)$-Hom-Lie algebras induced by $n$-Hom-Lie algebras} \label{sec:solvnil}
In this section, we define the derived series, central descending series and center of $n$-Hom-Lie algebras, then we show the relations between central descending series, derived series and center of an $n$-Hom-Lie algebra, and those of the induced $(n+1)$-Hom-Lie algebra. The definitions generalize those given in \cite{Filippov:nLie}. The results generalize those presented in \cite{km:n-ary} and independently in \cite{Bai:n}.

\subsection{Solvability and nilpotency of $n$-Hom-Lie algebras}
Now, we define the derived series, central descending series and the center of an $n$-Hom-Lie algebra, these generalization are relevant only in the case of multiplicative algebras.
\begin{definition}
Let $(A,[\cdot , ... ,\cdot],\alpha)$ be a multiplicative $n$-Hom-Lie algebra, and $I$ an ideal of $A$. We define $D^r(I), r \in \mathbb{N}$, the derived series of $I$ by:
\[D^0(I)=I \text{ and }D^{r+1}(I)=[D^r(I),...,D^r(I)].\]
\end{definition}
\begin{proposition}
The subspaces $D^r(I), r \in \mathbb{N}$, are subalgebras of $(A,[\cdot , ... ,\cdot],\alpha)$.
\end{proposition}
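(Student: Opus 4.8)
The plan is to prove by induction on $r$ that each $D^r(I)$ is a subalgebra of $(A,[\cdot,\dots,\cdot],\alpha)$, that is, that it is stable under $\alpha$ and under the $n$-bracket. The base case $r=0$ is immediate: $D^0(I)=I$ is an ideal, hence in particular $\alpha(I)\subseteq I$ and $[I,\dots,I]\subseteq I$ since an ideal absorbs brackets with arbitrary elements of $A$. For the inductive step I would assume $D^r(I)$ is a subalgebra and examine $D^{r+1}(I)=[D^r(I),\dots,D^r(I)]$, which by definition is the linear span of all elements $[a_1,\dots,a_n]$ with $a_1,\dots,a_n\in D^r(I)$.

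For stability under $\alpha$: since the algebra is multiplicative, $\alpha$ is an algebra morphism, so $\alpha([a_1,\dots,a_n]) = [\alpha(a_1),\dots,\alpha(a_n)]$. By the induction hypothesis each $\alpha(a_i)\in D^r(I)$, so this lies in $D^{r+1}(I)$; extending by linearity, $\alpha(D^{r+1}(I))\subseteq D^{r+1}(I)$. For stability under the bracket: take $n$ generators of $D^{r+1}(I)$, each of the form $b_j=[a_1^{(j)},\dots,a_n^{(j)}]$ with all $a_i^{(j)}\in D^r(I)$; then $[b_1,\dots,b_n]$ is a bracket of $n$ elements, each of which lies in $D^r(I)$ (because $D^r(I)$ is a subalgebra, so it is closed under its own $n$-bracket, giving $b_j\in D^r(I)$). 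Hence $[b_1,\dots,b_n]\in[D^r(I),\dots,D^r(I)]=D^{r+1}(I)$, and by $n$-linearity this extends from generators to all of $D^{r+1}(I)$.

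The key observation that makes the step work is simply that $D^{r+1}(I)\subseteq D^r(I)$: every generator $[a_1,\dots,a_n]$ of $D^{r+1}(I)$ is a bracket of elements of the subalgebra $D^r(I)$, hence already belongs to $D^r(I)$. Once this inclusion $D^{r+1}(I)\subseteq D^r(I)$ is in hand, closure of $D^{r+1}(I)$ under the bracket and under $\alpha$ follows from the corresponding closure properties of $D^r(I)$ restricted to the subset $D^{r+1}(I)$, together with multiplicativity for the $\alpha$-part.

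I do not expect any genuine obstacle here; the only point requiring a little care is the bookkeeping with $n$-linearity, since $D^{r+1}(I)$ is defined as a span and one must check the defining properties on spanning elements and then extend linearly — and for the $\alpha$-stability one genuinely uses the multiplicativity hypothesis, which is why the definition was stated only for multiplicative $n$-Hom-Lie algebras. It is worth noting explicitly that without multiplicativity the statement could fail, so the hypothesis is used essentially and not merely for convenience.
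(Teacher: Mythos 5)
Your proof is correct and follows essentially the same route as the paper's: induction on $r$, using multiplicativity of $\alpha$ to get $\alpha$-stability of $D^{r+1}(I)$ and the inclusion $D^{r+1}(I)\subseteq D^r(I)$ (i.e.\ closure of the subalgebra $D^r(I)$ under its own bracket) to get closure under the $n$-bracket. You are, if anything, slightly more careful than the paper about verifying the properties on spanning elements and extending by linearity.
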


\begin{proof}
We proceed by induction over $r \in \mathbb{N}$, the case of $r=0$ is trivial. Now suppose that $D^r(I)$ is a subalgebra of $A$, we prove that $D^{r+1}(I)$ is a subalgebra of $A$:
\begin{itemize}
\item Let $y \in D^{r+1}(I)$
\begin{align*}
\alpha(y)&=\alpha([y_{1},...,y_{n}])= [\alpha(y_{1}),...,\alpha(y_{n})] \quad y_{1},...,y_{n} \in D^r(I),
\end{align*}
which is in $D^{r+1}(I)$ because $\alpha(y_{1}),...,\alpha(y_{n})\in D^r(I)$. That is $\alpha(D^{r+1}(I))\subseteq D^{r+1}(I)$.
\item Let $x_1,...,x_n \in D^{r+1}(I)$:
\begin{align*}
[x_1,...,x_n] &= [[x_{11},...,x_{1n}],...,[x_{n1},...,x_{nn}]] \quad x_{11},...,x_{1n},...,x_{n1},...,x_{nn} \in D^r(I) \\
[x_1,...,x_n]&\in D^{r+1}(I).
\end{align*}
\end{itemize}
\end{proof}
\begin{proposition}
Let $(A,[\cdot , ... ,\cdot],\alpha)$ be a multiplicative $n$-Hom-Lie algebra, and $I$ an ideal of $A$. If $\alpha$ is surjective then $D^r(I),\, r \in \mathbb{N},$ are ideals of $A$.
\end{proposition}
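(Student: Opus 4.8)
The plan is to proceed by induction on $r \in \mathbb{N}$. The base case $r = 0$ is immediate, since $D^0(I) = I$ is an ideal of $A$ by hypothesis. For the inductive step, assume $D^r(I)$ is an ideal of $A$; I want to show that $D^{r+1}(I) = [D^r(I),\ldots,D^r(I)]$ is an ideal as well.

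Invariance under $\alpha$ is already available: by the preceding proposition, $D^{r+1}(I)$ is a subalgebra of $A$, hence $\alpha(D^{r+1}(I)) \subseteq D^{r+1}(I)$. So the only thing left to check is that $[x_1,\ldots,x_{n-1},y] \in D^{r+1}(I)$ for all $x_1,\ldots,x_{n-1} \in A$ and $y \in D^{r+1}(I)$. Since the bracket is $n$-linear and $D^{r+1}(I)$ is spanned by elements of the form $[z_1,\ldots,z_n]$ with $z_1,\ldots,z_n \in D^r(I)$, it suffices to treat the case $y = [z_1,\ldots,z_n]$.

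This is where surjectivity of $\alpha$ enters, and it is the crux of the argument: choose $x_j' \in A$ with $x_j = \alpha(x_j')$ for $1 \le j \le n-1$. The Hom-Nambu identity (with $\alpha_1 = \cdots = \alpha_{n-1} = \alpha$) then gives
\[
[x_1,\ldots,x_{n-1},[z_1,\ldots,z_n]] = \sum_{i=1}^n [\alpha(z_1),\ldots,\alpha(z_{i-1}),[x_1',\ldots,x_{n-1}',z_i],\alpha(z_{i+1}),\ldots,\alpha(z_n)].
\]
Now every factor appearing on the right lies in $D^r(I)$: the entries $\alpha(z_k)$ lie in $D^r(I)$ because $D^r(I)$ is a subalgebra, hence $\alpha$-invariant, and $[x_1',\ldots,x_{n-1}',z_i] \in D^r(I)$ because $D^r(I)$ is an ideal by the induction hypothesis and $z_i \in D^r(I)$. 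Therefore each summand belongs to $[D^r(I),\ldots,D^r(I)] = D^{r+1}(I)$, and so does the sum. This yields $[x_1,\ldots,x_{n-1},y] \in D^{r+1}(I)$, and the induction is complete.

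The main obstacle is precisely the use of surjectivity: without it, one cannot pull the $x_j$ back through $\alpha$ to bring the Hom-Nambu identity into play, and the statement is expected to fail for non-surjective twisting maps. Everything else — the reduction to spanning elements of $D^{r+1}(I)$ via multilinearity, and the bookkeeping showing that the right-hand side of the Hom-Nambu identity is assembled from elements of $D^r(I)$ — is routine once the two facts already established about $D^r(I)$ are in hand (that it is a subalgebra, and, inductively, an ideal).
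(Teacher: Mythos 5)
Your proof is correct and follows essentially the same route as the paper: induction on $r$, using surjectivity to write $x_j = \alpha(x_j')$ and then applying the Hom-Nambu identity so that each resulting summand is a bracket of elements of $D^r(I)$. If anything, your explicit reduction to spanning elements $y = [z_1,\ldots,z_n]$ via multilinearity is slightly more careful than the paper's presentation.
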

\begin{proof}
We already have that $D^r(I), \, r \in \mathbb{N},$ are subalgebras, we only need to prove that for all $x_1,...,x_{n-1} \in A$, and $y \in D^r(I)$, $[x_1,...,x_{n-1},y]\in  D^r(I)$.

We proceed by induction over $r \in \mathbb{N}$, the case of $r=0$ is trivial. Now suppose that $D^r(I)$ is an ideal of $A$, we prove that $D^{r+1}(I)$ is an ideal of $A$:

Let $x_1,...,x_{n-1} \in A$ and $y\in D^{r+1}(I)$:
\begin{align*}
[x_1,...,x_{n-1},y] &= [x_1,...,x_{n-1},[y_1,...,y_n]] \quad y_1,...,y_n \in D^r(I) \\
&= [\alpha(v_1),...,\alpha(v_{n-1}),[y_1,...,y_n]] \text{ for some } v_1,...,v_{n-1} \in A \\
&= \sum_{i=1}^n [\alpha(y_1),...,\alpha(y_{i-1}),[v_1,...,v_{n-1},y_i], \alpha(y_{i+1}),...,\alpha(y_{n})]\\
[x_1,...,x_{n-1},y]& \in D^{r+1}(I),
\end{align*}
because all the $\alpha(y_{i}) \in D^{r}(I)$ and all the $[v_1,...,v_{n-1},y_i]\in D^{r}(I)$ ($D^r(I)$ is an ideal), and then all the $[\alpha(y_1),...,\alpha(y_{i-1}),[v_1,...,v_{n-1},y_i], \alpha(y_{i+1}),...,\alpha(y_{n})]$ are in  $D^{r+1}(I)$. 
\end{proof}

\begin{definition}
Let $(A,[\cdot , ... ,\cdot],\alpha)$ be a multiplicative $n$-Hom-Lie algebra, and $I$ an ideal of $A$. We define $C^r(I) ,\, r \in \mathbb{N}$, the central descending series of $I$ by:
\[C^0(I)=I \text{ and }C^{r+1}(I)=[C^r(I),I ,...,I].\]
\end{definition}

\begin{proposition}
Let $(A,[\cdot , ... ,\cdot],\alpha)$ be a multiplicative $n$-Hom-Lie algebra, and $I$ an ideal of $A$. If $\alpha$ is surjective then $C^r(I) ,\, r \in \mathbb{N},$ are ideals of $A$.
\end{proposition}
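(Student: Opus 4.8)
The plan is to follow verbatim the structure of the proof already given for the derived series $D^r(I)$, arguing by induction on $r\in\mathbb N$. Recall that being an ideal means $\alpha(C^r(I))\subseteq C^r(I)$ together with $[x_1,\dots,x_{n-1},y]\in C^r(I)$ for all $x_i\in A$ and $y\in C^r(I)$; the latter condition in particular makes $C^r(I)$ a subalgebra. The base case $r=0$ is immediate since $C^0(I)=I$ is an ideal by hypothesis.

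For the inductive step, assume $C^r(I)$ is an ideal of $A$ and consider $C^{r+1}(I)=[C^r(I),I,\dots,I]$, which is by definition the span of all brackets $[c,i_1,\dots,i_{n-1}]$ with $c\in C^r(I)$, $i_j\in I$. First I would check $\alpha$-invariance: by multiplicativity $\alpha([c,i_1,\dots,i_{n-1}])=[\alpha(c),\alpha(i_1),\dots,\alpha(i_{n-1})]$, and since $\alpha(c)\in C^r(I)$ and $\alpha(i_j)\in I$, this lies in $C^{r+1}(I)$; by multilinearity $\alpha(C^{r+1}(I))\subseteq C^{r+1}(I)$. Next, take $x_1,\dots,x_{n-1}\in A$ and a generator $y=[c,i_1,\dots,i_{n-1}]$ of $C^{r+1}(I)$. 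Using surjectivity of $\alpha$, write $x_k=\alpha(v_k)$ for suitable $v_k\in A$, set $(y_1,\dots,y_n)=(c,i_1,\dots,i_{n-1})$, and apply the Hom-Nambu identity:
\[
[x_1,\dots,x_{n-1},y]=\sum_{i=1}^n[\alpha(y_1),\dots,\alpha(y_{i-1}),[v_1,\dots,v_{n-1},y_i],\alpha(y_{i+1}),\dots,\alpha(y_n)].
\]
In the summand $i=1$ the inner bracket is $[v_1,\dots,v_{n-1},c]\in C^r(I)$ because $C^r(I)$ is an ideal by the induction hypothesis, while the remaining entries $\alpha(i_1),\dots,\alpha(i_{n-1})$ lie in $I$, so this term is an element of $[C^r(I),I,\dots,I]=C^{r+1}(I)$. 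In each summand with $i\geq 2$ the first slot $\alpha(c)$ lies in $C^r(I)$, the modified slot $[v_1,\dots,v_{n-1},i_{i-1}]$ lies in $I$ since $I$ is an ideal, and all other slots $\alpha(i_j)$ lie in $I$; hence this term too lies in $C^{r+1}(I)$. Summing and using multilinearity over a spanning set of $y$'s gives $[x_1,\dots,x_{n-1},y]\in C^{r+1}(I)$, which completes the induction.

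The only point requiring attention — and the step I expect to be the real content rather than the routine verifications — is the bookkeeping in the Hom-Nambu sum: one must observe that in exactly one summand the $C^r(I)$-slot is the one that gets bracketed, and that this is harmless precisely because $C^r(I)$ is an ideal (here the induction hypothesis is essential), whereas in every other summand the $C^r(I)$-element is merely hit by $\alpha$ and it is one of the $I$-slots that is bracketed, staying in $I$ because $I$ is an ideal. Surjectivity of $\alpha$ is used only to place $x_1,\dots,x_{n-1}$ in the image of $\alpha$ so that the Hom-Nambu identity is applicable, and multiplicativity is used only for the $\alpha$-invariance of $C^{r+1}(I)$.
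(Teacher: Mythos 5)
Your proposal is correct and follows essentially the same route as the paper: induction on $r$, multiplicativity of $\alpha$ for the invariance $\alpha(C^{r+1}(I))\subseteq C^{r+1}(I)$, surjectivity to write $x_k=\alpha(v_k)$, and the Hom-Nambu identity to split the bracket into one summand handled by the induction hypothesis (the $C^r(I)$-slot gets bracketed) and $n-1$ summands handled by $I$ being an ideal. The only differences are cosmetic (you place the $C^r(I)$-element in the first slot rather than the last, and you spell out the extension from generators to spans), so nothing further is needed.
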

\begin{proof}
We proceed by induction over $r \in \mathbb{N}$, the case of $r=0$ is trivial. Now suppose that $C^r(I)$ is an ideal of $A$, we prove that $D^{r+1}(I)$ is an ideal of $A$:
\begin{itemize}
\item Let $y \in C^{r+1}(I)$
\begin{align*}
\alpha(y)&=\alpha([y_{1},...,y_{n-1},w])= [\alpha(y_{1}),...,\alpha(y_{n-1}), \alpha(w)] \quad y_{1},...,y_{n-1} \in I, w \in C^r(I), 
\end{align*}
which is in $C^{r+1}(I)$ because $\alpha(y_{1}),...,\alpha(y_{n-1})\in I$ and $\alpha(w)\in C^r(I)$. \\That is $\alpha(C^{r+1}(I))\subseteq C^{r+1}(I)$.
\item Let $x_1,...,x_{n-1} \in A$ and $y\in C^{r+1}(I)$:
\begin{align*}
[x_1,...,x_{n-1},y] &= [x_1,...,x_{n-1},[y_{1},...,y_{n-1},w]] \quad y_1,...,y_{n-1} \in I, w \in C^r(I) \\
&= [\alpha(v_1),...,\alpha(v_{n-1}),[y_1,...,y_{n-1},w]] \text{ for some } v_1,...,v_{n-1} \in A \\
&= \sum_{i=1}^{n-1} [\alpha(y_1),...,\alpha(y_{i-1}),[v_1,...,v_{n-1},y_i], \alpha(y_{i+1}),...,\alpha(y_{n-1}),\alpha(w)] \\
 &+ [\alpha(y_1),...,\alpha(y_{n-1}),[v_1,...,v_{n-1},w]],
\end{align*}
which is in $C^{r+1}(I)$, because all the $\alpha(y_i)\in I$, $\alpha(w) \in C^{r}(I)$, all the $[v_1,...,v_{n-1},y_i]\in I$ (I is an ideal) and $[v_1,...,v_{n-1},w]\in C^{r}(I)$ ($C^r(I)$ is an ideal). Therefore all the $[\alpha(y_1),...,\alpha(y_{i-1}),[v_1,...,v_{n-1},y_i], \alpha(y_{i+1}),...,\alpha(y_{n})]$ and\\ $[\alpha(y_1),...,\alpha(y_{n-1}),[v_1,...,v_{n-1},w]]$ are in  $C^{r+1}(I)$. 
\end{itemize}
\end{proof}

\begin{definition}
Let $(A,[\cdot , ... , \cdot],\alpha)$ be a multiplicative $n$-Hom-Lie algebra, and $I$ an ideal of $A$. The ideal $I$ is said to be solvable if there exists $r \in \mathbb{N}$ such that $D^r(I)=\{0\}$. It is said to be nilpotent if there exists $r \in \mathbb{N}$ such that $C^r(I)=\{0\}$.
\end{definition}

\begin{definition}
Let $(A,[\cdot , ... , \cdot],\alpha)$ be a multiplicative $n$-Hom-Lie algebra. define the center of $A$, denoted by $Z(A)$, as:
\[Z(A) = \{z \in A : [x_1,...,x_{n-1},z]=0, \forall x_1,...,x_{n-1} \in A\}.\]
\end{definition}

\begin{proposition}
Let $(A,[\cdot , ... , \cdot],\alpha)$ be a multiplicative $n$-Hom-Lie algebra. If $\alpha$ is surjective then the center of $A$ is an ideal of $A$.
\end{proposition}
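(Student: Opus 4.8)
The plan is to verify the two defining conditions of an ideal for $Z(A)$: invariance under $\alpha$, and absorption of the bracket. The second condition is immediate, since for any $z\in Z(A)$ and any $x_1,\dots,x_{n-1}\in A$ we have $[x_1,\dots,x_{n-1},z]=0$ by definition of the center, and $0\in Z(A)$; so $Z(A)$ trivially absorbs brackets. (That $Z(A)$ is a linear subspace is clear from the multilinearity of the bracket.) Hence the only point requiring work is $\alpha(Z(A))\subseteq Z(A)$, and this is where surjectivity of $\alpha$ enters.

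For the invariance, I would fix $z\in Z(A)$ and arbitrary $x_1,\dots,x_{n-1}\in A$, and use surjectivity of $\alpha$ to choose $v_1,\dots,v_{n-1}\in A$ with $\alpha(v_i)=x_i$ for each $i$. Then, using multiplicativity (so that $\alpha$ commutes with the bracket),
\[
[x_1,\dots,x_{n-1},\alpha(z)] = [\alpha(v_1),\dots,\alpha(v_{n-1}),\alpha(z)] = \alpha\big([v_1,\dots,v_{n-1},z]\big) = \alpha(0) = 0,
\]
where the last line uses $z\in Z(A)$. Since $x_1,\dots,x_{n-1}$ were arbitrary, $\alpha(z)\in Z(A)$, giving $\alpha(Z(A))\subseteq Z(A)$.

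Combining the two verifications shows $Z(A)$ is an ideal of $A$. The ``main obstacle'' is really just the bookkeeping observation that without surjectivity of $\alpha$ one cannot pull the twisting maps off the arguments $x_i$, so the hypothesis is used in an essential way precisely in the $\alpha$-invariance step; everything else is formal.
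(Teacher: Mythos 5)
Your proof is correct and follows essentially the same route as the paper: the bracket-absorption condition is immediate since $[x_1,\dots,x_{n-1},z]=0\in Z(A)$, and the $\alpha$-invariance is obtained exactly as in the paper by writing each $x_i=\alpha(v_i)$ via surjectivity and then using multiplicativity to pull $\alpha$ outside the bracket. No gaps; your closing remark correctly identifies where surjectivity is essential.
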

\begin{proof}
Let $z \in Z(A)$ and $x_1,...,x_{n-1} \in A$, we put $x_i = \alpha(u_i)$, for $1 \leq i \leq n-1$, then we have:
\begin{align*}
\bracket{x_1,...,x_{n-1},\alpha(z)}= \bracket{\alpha(u_1),...,\alpha(u_{n-1}),\alpha(z)} 
= \alpha\para{\bracket{u_1,...,u_{n-1},z}} 
=0,
\end{align*}
that is $\alpha(Z(A)) \subseteq Z(A)$.

Let $z \in Z(A)$ and $x_1,...,x_{n-1},y_1,...,y_{n-1} \in A$:
\begin{align*}
\bracket{x_1,...,x_{n-1},\bracket{y_1,...,y_{n-1},z}}= \bracket{x_1,...,x_{n-1},0}
=0.
\end{align*}
Which means that $Z(A)$ is an ideal of $A$.
\end{proof}

\begin{remark}
In the preceding definitions, if $\alpha=Id_A$ then we find back the definitions introduced by Filippov \cite{Filippov:nLie} for $n$-Lie algebras. We have different definitions of derived series and central descending series of $n$-Lie algebras (See \cite{Kasymov:nLie}) which are not included in this generalization.
\end{remark}

\subsection{Solvability and nilpotency of $(n+1)$-Hom-Lie algebras induced by $n$-Hom-Lie algebras}
Now we show the relationships between central descending series, derived series and center of an $n$-Hom-Lie algebra, and those of the induced $(n+1)$-Hom-Lie algebra.

\begin{theorem} \label{solv2}
Let $(A,\bracket{\cdot,...,\cdot},\alpha_1,...,\alpha_{n-1})$ be an $n$-Hom-Lie algebra, $\tau$ a trace, $\alpha_n : A \to A$ a linear map satisfying the conditions of Theorem \ref{thm:inducedhomnambulie} and $(A,\bracket{\cdot,...,\cdot}_\tau, \alpha_1,...,\alpha_n)$ the induced $(n+1)$-Hom-Lie algebra. The induced algebra  is solvable, more precisely $D^2(A_\tau) = 0$ i.e. $\left( D^1(A_\tau)=\bracket{A,...,A}_\tau,\bracket{\cdot,...,\cdot}_\tau\right)$ is abelian.
\end{theorem}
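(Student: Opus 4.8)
The plan is to show that any element of $D^1(A_\tau) = \bracket{A,\dots,A}_\tau$ can be written as a linear combination of terms of the form $\tau(z)\,\bracket{z_1,\dots,z_n}$, and then to check that the $\phi_\tau$-bracket of $n+1$ such terms vanishes identically, using the key fact from Lemma \ref{lemma:phitlinantisym} that $\tau$ is a $\phi_\tau$-trace (equivalently, that $\tau$ kills every bracket $\bracket{x_1,\dots,x_{n+1}}_\tau$), together with the original property that $\tau$ is a $\phi$-trace.

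First I would expand, for arbitrary $u_1,\dots,u_{n+1}\in A_\tau$,
\[
\bracket{u_1,\dots,u_{n+1}}_\tau = \sum_{k=1}^{n+1}(-1)^{k-1}\tau(u_k)\,\bracket{u_1,\dots,\widehat{u_k},\dots,u_{n+1}},
\]
so that $D^1(A_\tau)$ is spanned by elements of the form $w = \tau(a)\,\bracket{a_1,\dots,a_n}$ with $a,a_1,\dots,a_n\in A$ (scalars $\tau(a)$ absorbed). Now take $n+1$ such generators $w_1,\dots,w_{n+1}$ and compute $\bracket{w_1,\dots,w_{n+1}}_\tau = \sum_{k=1}^{n+1}(-1)^{k-1}\tau(w_k)\,\bracket{w_1,\dots,\widehat{w_k},\dots,w_{n+1}}$. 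Each $w_k$ lies in $\bracket{A,\dots,A}$, hence in the image of $\phi$; since $\tau$ is a $\phi$-trace (Definition \ref{def:phitrace}), $\tau(w_k) = 0$ for every $k$. Therefore $\bracket{w_1,\dots,w_{n+1}}_\tau = 0$ on generators, and by $(n+1)$-linearity $D^2(A_\tau) = \bracket{D^1(A_\tau),\dots,D^1(A_\tau)}_\tau = \{0\}$. This immediately gives that $\bigl(D^1(A_\tau),\bracket{\cdot,\dots,\cdot}_\tau\bigr)$ is abelian and that $A_\tau$ is solvable with $D^2(A_\tau)=0$.

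Actually the argument is even cleaner: it suffices to observe that $\tau$ vanishes on $D^1(A_\tau)$ itself. Indeed $D^1(A_\tau) = \bracket{A,\dots,A}_\tau = \operatorname{Im}\phi_\tau$, and by Lemma \ref{lemma:phitlinantisym} $\tau$ is a $\phi_\tau$-trace, so $\tau\bigl(\bracket{u_1,\dots,u_{n+1}}_\tau\bigr)=0$ for all $u_i$. Hence for any $v_1,\dots,v_{n+1}\in D^1(A_\tau)$, in the expansion $\bracket{v_1,\dots,v_{n+1}}_\tau = \sum_{k}(-1)^{k-1}\tau(v_k)\,\bracket{v_1,\dots,\widehat{v_k},\dots,v_{n+1}}$ every coefficient $\tau(v_k)$ is zero, so the bracket vanishes. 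This shows $D^2(A_\tau)=\{0\}$ directly, without even needing to pass to generators.

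The only point requiring a little care — and the main (mild) obstacle — is the bookkeeping that $D^1(A_\tau)$ really equals $\operatorname{Im}\phi_\tau$ and is itself a subspace stable under the $\alpha_i$, so that it is a legitimate ideal/subalgebra on which $D^2$ is defined; but this is immediate from $(n+1)$-linearity and skew-symmetry of $\phi_\tau$ and the fact (from Theorem \ref{thm:inducedhomnambulie}) that the $\alpha_i$ are structure maps of $A_\tau$. No multiplicativity of $\alpha$ is needed here because the statement only concerns $D^1$ and $D^2$; the hypothesis that $\tau$ is a $\phi$-trace (built into Theorem \ref{thm:inducedhomnambulie} via Definition \ref{def:phitrace}) does all the work.
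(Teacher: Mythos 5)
Your proposal is correct and, in its second (``cleaner'') form, is exactly the paper's own argument: since $\tau$ is a $\phi_\tau$-trace (Lemma \ref{lemma:phitlinantisym}), $\tau$ vanishes on $D^1(A_\tau)$, so every coefficient $\tau(v_k)$ in the expansion of $\bracket{v_1,\dots,v_{n+1}}_\tau$ for $v_i\in D^1(A_\tau)$ is zero. Your first variant, which instead uses that $\tau$ is a $\phi$-trace on the generators $\tau(a)\bracket{a_1,\dots,a_n}$, is an equally valid minor rephrasing of the same idea.
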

\begin{proof}
Let $x_1,...,x_{n+1}\in \bracket{A,...,A}_\tau$, $x_i=\bracket{x_i^1,...,x_i^{n+1}}_\tau$,  $\forall 1 \leq i \leq n+1$ , then:
\begin{align*}
&\bracket{x_1,...,x_{n+1}}_\tau \\ &= \sum_{i=1}^{n+1} \tau\para{\bracket{x_i^1,...,x_i^{n+1}}_\tau}\bracket{\bracket{x_1^1,...,x_1^{n+1}}_\tau,...,\widehat{\bracket{x_i^1,...,x_i^{n+1}}_\tau},...\bracket{x_{n+1}^1,...,x_{n+1}^{n+1}}_\tau}= 0, 
\end{align*}
because $\tau\left([\cdot, ... ,\cdot]_\tau \right)=0$.

\end{proof}

\begin{proposition}
Let $(A,\bracket{\cdot,...,\cdot},\alpha)$ be a multiplicative $n$-Hom-Lie algebra, $\tau$ a trace map satisfying $\tau \circ \alpha = \tau$ and $(A,\bracket{\cdot,...,\cdot}_\tau,\alpha)$ the induced $(n+1)$-Hom-Lie algebra. Let $c \in Z(A)$, if $\tau(c) = 0$ then $c \in Z(A_\tau)$. 
Moreover, if $A$ is not abelian then $\tau(c) = 0$ if and only if $c \in Z(A_\tau)$.
\end{proposition}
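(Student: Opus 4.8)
The plan is to reduce everything to a single explicit formula for the induced bracket evaluated at a central element. First I would expand $\bracket{x_1,...,x_n,c}_\tau$ according to Definition \ref{def:phitau}:
\[
\bracket{x_1,...,x_n,c}_\tau=\sum_{i=1}^{n}(-1)^{i-1}\tau(x_i)\bracket{x_1,...,\hat{x}_i,...,x_n,c}+(-1)^n\tau(c)\bracket{x_1,...,x_n}.
\]
Since $c\in Z(A)$, every summand $\bracket{x_1,...,\hat{x}_i,...,x_n,c}$ is an $n$-ary bracket one of whose $n$ entries is $c$, hence vanishes; this is the only place centrality of $c$ in $A$ enters. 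Thus $\bracket{x_1,...,x_n,c}_\tau=(-1)^n\tau(c)\bracket{x_1,...,x_n}$ for all $x_1,...,x_n\in A$.

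With this identity both claims are immediate. If $\tau(c)=0$, the right-hand side vanishes for every choice of $x_1,...,x_n$, so $c\in Z(A_\tau)$ by the definition of the center of the induced $(n+1)$-Hom-Lie algebra; this is the first assertion. For the ``moreover'' part, assume $A$ is not abelian and $c\in Z(A_\tau)$. Then $(-1)^n\tau(c)\bracket{x_1,...,x_n}=0$ for all $x_1,...,x_n$, and choosing $x_1,...,x_n$ with $\bracket{x_1,...,x_n}\neq 0$ (possible precisely because $A$ is not abelian) forces $\tau(c)=0$. Combined with the first assertion, this yields $\tau(c)=0\iff c\in Z(A_\tau)$ for $c\in Z(A)$ when $A$ is not abelian.

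I do not anticipate any real obstacle: the argument is a two-line computation followed by a case distinction. The only point requiring care is the bookkeeping in the expansion — checking that after deleting $x_i$ the remaining bracket really does have $c$ among its $n$ arguments, so that centrality of $c$ in $A$ applies — together with the observation that the hypothesis $\tau\circ\alpha=\tau$ is used only to guarantee (via Theorem \ref{thm:inducedmul}) that $A_\tau$ is a well-defined $(n+1)$-Hom-Lie algebra, and not in the computation itself.
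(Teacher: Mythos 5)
Your proof is correct and follows exactly the paper's argument: expand $\bracket{x_1,\dots,x_n,c}_\tau$, use centrality of $c$ in $A$ to kill all terms containing $c$ inside an $n$-bracket, and read off both directions from the surviving term $(-1)^n\tau(c)\bracket{x_1,\dots,x_n}$. Nothing to add.
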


\begin{proof}
Let $c \in Z(A)$ and $x_1,...,x_n \in A$:
\begin{align*}
\bracket{x_1,...,x_n,c}_\tau &= \sum_{i=1}^n (-1)^{i-1} \tau(x_i)\bracket{x_1,...,\widehat{x_i},...,x_n,c} + (-1)^n \tau(c) \bracket{x_1,...,x_n}\\
&= (-1)^n \tau(c) \bracket{x_1,...,x_n}.
\end{align*}
If $\tau(c)=0$ then $c \in Z(A_\tau)$.\\ Conversely, if $c \in Z(A_\tau)$ and $A$ is not abelian, then $\tau(c)=0$. 
\end{proof}

\begin{proposition}
Let $(A,\bracket{\cdot,...,\cdot},\alpha)$ be a non-abelian multiplicative $n$-Hom-Lie algebra, $\tau$ a trace satisfying $\tau \circ \alpha = \tau$, and $(A,\bracket{\cdot,...,\cdot}_\tau,\alpha)$ the induced $(n+1)$-Hom-Lie algebra. If $\tau\para{Z(A)} \neq \{0\}$ then $A_\tau$ is not abelian.
\end{proposition}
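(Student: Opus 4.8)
The plan is to exhibit one explicit nonzero $(n+1)$-bracket in $A_\tau$. Since $\tau\para{Z(A)}\neq\{0\}$, fix $c\in Z(A)$ with $\tau(c)\neq 0$; since $A$ is non-abelian, fix $y_1,\dots,y_n\in A$ with $\bracket{y_1,\dots,y_n}\neq 0$. The candidate witness is the tuple $(y_1,\dots,y_n,c)$.

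The key step is to expand $\bracket{y_1,\dots,y_n,c}_\tau$ via Definition \ref{def:phitau}, putting $c$ in the last slot, exactly as in the proof of the preceding proposition:
\[
\bracket{y_1,\dots,y_n,c}_\tau = \sum_{i=1}^{n}(-1)^{i-1}\tau(y_i)\,\bracket{y_1,\dots,\widehat{y_i},\dots,y_n,c} + (-1)^n\tau(c)\,\bracket{y_1,\dots,y_n}.
\]
Each summand in the first sum is an $n$-bracket having $c$ in its last argument, hence vanishes because $c\in Z(A)$. Thus $\bracket{y_1,\dots,y_n,c}_\tau=(-1)^n\tau(c)\bracket{y_1,\dots,y_n}$, and this is nonzero since $\K$ is a field, $\tau(c)\neq 0$, and $\bracket{y_1,\dots,y_n}\neq 0$. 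Therefore $\bracket{\cdot,\dots,\cdot}_\tau$ does not vanish identically, i.e. $A_\tau$ is not abelian.

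A shorter, essentially equivalent route is to invoke the preceding proposition directly: applied to the non-abelian $A$, it gives that $c\in Z(A_\tau)$ if and only if $\tau(c)=0$; since $\tau(c)\neq 0$ we conclude $c\notin Z(A_\tau)$, and an abelian $A_\tau$ would force $Z(A_\tau)=A\ni c$, a contradiction. There is no real obstacle in either argument; the only point needing care is that the center hypothesis genuinely annihilates every $\bracket{y_1,\dots,\widehat{y_i},\dots,y_n,c}$ (immediate, as $c$ already sits in the last slot) and that a nonzero scalar multiple of a nonzero vector in a $\K$-vector space is nonzero.
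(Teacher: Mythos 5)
Your proof is correct and is essentially identical to the paper's: both expand $\bracket{y_1,\dots,y_n,c}_\tau$, kill the first sum using $c\in Z(A)$, and observe that the surviving term $(-1)^n\tau(c)\bracket{y_1,\dots,y_n}$ is nonzero. The alternative route via the preceding proposition is a valid shortcut but adds nothing essential.
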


\begin{proof}
Let $x_1,...,x_n \in A$ such that $\bracket{x_1,...,x_n} \neq 0$ and $c \in Z(A)$ such that $\tau(c) \neq 0$ then we have:
\begin{align*}
\bracket{x_1,...,x_n,c}_\tau &= \sum_{i=1}^n (-1)^{i-1} \tau(x_i)\bracket{x_1,...,\widehat{x_i},...,x_n,c} + (-1)^n \tau(c) \bracket{x_1,...,x_n}\\
&= (-1)^n \tau(c) \bracket{x_1,...,x_n} \neq 0,
\end{align*}
which means that $A_\tau$ is not abelian.
\end{proof}

\begin{proposition}
Let $\para{A,\bracket{\cdot,...,\cdot},\alpha}$ be a multiplicative $n$-Hom-Lie algebra, $\tau$ be a trace satisfying $\tau \circ \alpha = \tau$, and $\para{A,\bracket{\cdot,...,\cdot}_\tau,\alpha}$ the induced algebra. Let $\para{C^p(A)}_p$ be the central descending series of $A$, and $\para{C^p(A_\tau)}_p$ be the central descending series of $A_\tau$. Then we have 
\[ C^p(A_\tau) \subset C^p(A), \forall p \in \mathbb{N}. \]
If there exists $u \in A$ such that $\bracket{u,x_1,...,x_n}_\tau = \bracket{x_1,...,x_n}, \forall x_1,...,x_n \in A$,  then
\[ C^p(A_\tau) = C^p(A), \forall p \in \mathbb{N}. \]
\end{proposition}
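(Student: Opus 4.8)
The plan is to prove the two assertions separately, both by induction on $p$, using the explicit formula for $\bracket{\cdot,...,\cdot}_\tau$ in terms of $\bracket{\cdot,...,\cdot}$.

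First I would establish the inclusion $C^p(A_\tau) \subseteq C^p(A)$. The base case $p=0$ is trivial since both sides equal $A$. For the inductive step, recall that $C^{p+1}(A_\tau) = \bracket{C^p(A_\tau),A,...,A}_\tau$, so take a generator $\bracket{w,x_1,...,x_n}_\tau$ with $w \in C^p(A_\tau)$ and $x_1,...,x_n \in A$, and expand it via Definition \ref{def:phitau}:
\[ \bracket{w,x_1,...,x_n}_\tau = \tau(w)\bracket{x_1,...,x_n} + \sum_{i=1}^{n}(-1)^{i}\tau(x_i)\bracket{w,x_1,...,\widehat{x_i},...,x_n}. \]
By the induction hypothesis $w \in C^p(A_\tau) \subseteq C^p(A)$, and $C^p(A)$ is an ideal of $A$ (the algebra is multiplicative with — one should check — $\alpha$ surjective, or else one argues directly that each bracket $\bracket{w,x_1,...,\widehat{x_i},...,x_n}$ lies in $C^{p+1}(A) \subseteq C^p(A)$). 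The term $\tau(w)\bracket{x_1,...,x_n}$ need not obviously lie in $C^p(A)$; here I would use that $\bracket{x_1,...,x_n} \in \bracket{A,...,A} = C^1(A)$, together with the fact that $p \geq 1$ forces... actually the cleanest route is: each summand $\bracket{w,x_1,...,\widehat{x_i},...,x_n}$ with $w \in C^p(A)$ lies in $C^{p+1}(A)$, hence in $C^p(A)$; for the leading term one checks $C^p(A) \supseteq C^1(A)$ is false in general, so instead one observes $\bracket{x_1,\dots,x_n}=\bracket{x_1,\dots,x_n}$ contributes only when multiplied by $\tau(w)$, and since $w\in C^p(A)$ with $p\ge 1$... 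The honest statement of the argument is that one shows by a refined induction that $C^p(A_\tau) \subseteq C^{p}(A) \cap \ker\tau$-type control is not needed: each generator of $C^{p+1}(A_\tau)$ is a $\K$-combination of elements $\bracket{z,a_1,\dots,a_{n-1}}$ with $z\in C^p(A)$, all of which lie in $C^{p+1}(A)\subseteq C^p(A)$, plus the single term $\tau(w)\bracket{x_1,\dots,x_n}$, and one absorbs the latter by strengthening the inductive claim appropriately (e.g. proving $C^{p}(A_\tau)\subseteq C^{p}(A)$ with the observation that for $p\ge 1$ every element of $C^p(A_\tau)$ is already a bracket, so $\tau$ vanishes on the relevant arguments). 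This bookkeeping around the $\tau(w)\bracket{x_1,...,x_n}$ term is the main obstacle.

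For the equality, suppose $u \in A$ satisfies $\bracket{u,x_1,...,x_n}_\tau = \bracket{x_1,...,x_n}$ for all $x_1,...,x_n$. By the first part it suffices to prove $C^p(A) \subseteq C^p(A_\tau)$, again by induction on $p$, the case $p=0$ being trivial. For the step, take $\bracket{w,x_1,...,x_{n-1}} \in C^{p+1}(A)$ with $w \in C^p(A)$; I would rewrite it using the hypothesis as
\[ \bracket{w,x_1,...,x_{n-1}} = \bracket{u,w,x_1,...,x_{n-1}}_\tau, \]
where on the right $u,x_1,\dots,x_{n-1}\in A$ and $w\in C^p(A)\subseteq C^p(A_\tau)$ by the induction hypothesis, so the right-hand side lies in $\bracket{C^p(A_\tau),A,\dots,A}_\tau = C^{p+1}(A_\tau)$. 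Since such elements span $C^{p+1}(A)$, we get $C^{p+1}(A)\subseteq C^{p+1}(A_\tau)$, closing the induction. Combined with the inclusion from the first part, this yields $C^p(A_\tau) = C^p(A)$ for all $p$, completing the proof.
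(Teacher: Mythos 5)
Your proposal is correct and follows essentially the same route as the paper: expand $\bracket{w,x_1,\dots,x_n}_\tau$ termwise for the inclusion, and use $u$ together with skew-symmetry to rewrite generators of $C^{p+1}(A)$ as elements of $C^{p+1}(A_\tau)$ for the equality. The ``main obstacle'' you circle around, the term $\tau(w)\bracket{x_1,\dots,x_n}$, is resolved exactly as you finally suggest and as the paper does in one parenthetical: for $p=1$ that term lies in $C^1(A)$ anyway, and for $p\geq 1$ every $w\in C^p(A_\tau)$ is a sum of induced brackets, on which $\tau$ vanishes because $\tau$ is a $\phi_\tau$-trace (Lemma \ref{lemma:phitlinantisym}), so $\tau(w)=0$ and the term disappears. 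State that affirmatively rather than as a hedge and your argument is complete.
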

\begin{proof}
Theorem \ref{thm:inducedmul} provides that $A_\tau$ is multiplicative. We proceed by induction over $p \in \mathbb{N}$. The case of $p=0$ is trivial, for $p=1$ we have:
\[\forall x = \bracket{x_1,...,x_{n+1}}_\tau \in C^1(A_\tau),\  x=\sum_{i=1}^{n+1} (-1)^{i-1} \tau(x_i) \bracket{x_1,...,\hat{x}_i,...,x_{n+1}}, \] which is a linear combination of elements of $C^1(A)$ and then is an element of $C^1(A)$.
Suppose now that there exists $u \in A$ such that $\bracket{u,x_1,...,x_n}_\tau = \bracket{x_1,...,x_n}, \forall x_1,...,x_n \in A$. Then for $x = \bracket{x_1,...,x_n} \in C^1(A)$, $x=\bracket{u,x_1,...,x_n}_\tau$ and hence it  is an element of $C^1(A_\tau)$.

Now, we suppose this proposition  true for some $p \in \mathbb{N}$, and let $x \in C^{p+1}(A_\tau)$. Then $x=\bracket{a,x_1,...,x_n}_\tau$ with $x_1,...,x_n \in A$ and $a \in C^{p}(A_\tau)$,
\[x=\bracket{a,x_1,...,x_n}_\tau = \sum_{i=1}^{n} (-1)^{i} \tau(x_i) \bracket{a,x_1,...,\hat{x}_i,...,x_n}, \qquad (\tau(a)=0)\]
which is an element of $C^{p+1}(A)$ because $a \in C^{p}(A_\tau) \subset C^p(A)$.
Assume there exists $u \in A$ such that $\bracket{u,x_1,...,x_n}_\tau = \bracket{x_1,...,x_n}, \forall x_1,...,x_n \in A$. Then if $x \in C^{p+1}(A)$ we have $x = \bracket{a,x_1,...,x_{n-1}}$ with $a \in C^{p}(A)$ and $x_1,...,x_{n-1} \in A$. Therefore \[ x = \bracket{a,x_1,...,x_{n-1}} = \bracket{u,a,x_1,...,x_{n-1}}_\tau = (-1)^n \bracket{a,x_1,...,x_{n-1},i}_\tau \in C^{p+1}(A_\tau).\]  
\end{proof}

\begin{remark} \label{Hom-D3subD}
It also results from the preceding proposition that \[D^1(A_\tau) = \bracket{A,...,A}_\tau \subset D^1(A) = \bracket{A,...,A},\] and if there exists $u \in A$ such that \[\bracket{u,x_1,...,x_n}_\tau = \bracket{x_1,...,x_n}, \forall x_1,...,x_n \in A.\] Then $D^1 (A_\tau) = D^1(A)$. For the rest of the derived series, we have obviously the first inclusion by Theorem \ref{solv2}, which states also that all induced algebras are solvable.
\end{remark}

\begin{theorem}
Let $\para{A,\bracket{\cdot,...,\cdot},\alpha}$ be a multiplicative $n$-Hom-Lie algebra, $\tau$ be a trace satisfying $\tau \circ \alpha = \tau$, and $\para{A,\bracket{\cdot,...,\cdot}_\tau,\alpha}$ the induced algebra. Then,
if $A$ is nilpotent of class $p$, we have $A_\tau$ is nilpotent of class at most  $p$.
Moreover, if there exists $u \in A$ such that $\bracket{u,x_1,...,x_n}_\tau = \bracket{x_1,...,x_n}, \forall x_1,...,x_n \in A$, then $A$ is nilpotent of class  $p$ if and only if $A_\tau$ is nilpotent of class  $p$.
\end{theorem}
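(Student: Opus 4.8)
The plan is to obtain this theorem as an immediate consequence of the preceding proposition, which already established $C^p(A_\tau) \subseteq C^p(A)$ for all $p \in \mathbb{N}$, together with the reverse inclusion $C^p(A_\tau) = C^p(A)$ under the hypothesis that the distinguished element $u$ exists. First I would recall the bookkeeping convention: $A$ is nilpotent of class $p$ means $p$ is the least integer with $C^p(A) = \{0\}$, and similarly for $A_\tau$. With this in place the two assertions follow from the two halves of that proposition.

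For the first assertion, suppose $A$ is nilpotent of class $p$, so that $C^p(A) = \{0\}$. The general inclusion $C^p(A_\tau) \subseteq C^p(A)$ then forces $C^p(A_\tau) = \{0\}$, hence $A_\tau$ is nilpotent; its class is the least $q$ with $C^q(A_\tau) = \{0\}$, and since $q = p$ works we conclude that $A_\tau$ is nilpotent of class at most $p$. Note that this direction uses only the one-sided inclusion and needs nothing about $u$.

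For the converse, assume in addition that there is $u \in A$ with $\bracket{u,x_1,\ldots,x_n}_\tau = \bracket{x_1,\ldots,x_n}$ for all $x_1,\ldots,x_n \in A$. By the preceding proposition we then have $C^q(A_\tau) = C^q(A)$ for every $q \in \mathbb{N}$, so $C^q(A_\tau) = \{0\}$ holds precisely when $C^q(A) = \{0\}$. Consequently the least index at which the central descending series vanishes is the same for $A$ and for $A_\tau$, i.e. $A$ is nilpotent of class $p$ if and only if $A_\tau$ is nilpotent of class $p$.

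Since the substantive work has already been carried out in the previous proposition, I do not expect a genuine obstacle here; the only points deserving care are to keep the one-sided inclusion (which yields only ``class at most $p$'') separate from the two-sided equality (which is what upgrades this to an exact comparison of classes), and to handle the trivial edge cases of the nilpotency index, such as when $A$ is already abelian.
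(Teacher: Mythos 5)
Your proposal is correct and follows essentially the same route as the paper: the first assertion from the one-sided inclusion $C^p(A_\tau)\subseteq C^p(A)$, and the exact comparison of nilpotency classes from the equality $C^q(A_\tau)=C^q(A)$ guaranteed by the existence of $u$. No gaps.
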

\begin{proof} 
Theorem \ref{thm:inducedmul} provides that $A_\tau$ is multiplicative.
\begin{enumerate}
\item Suppose that $\para{A,\bracket{\cdot ,..., \cdot}}$ is nilpotent of class $p\in \mathbb{N}$, then $C^p(A)=\{0\}$. By the preceding proposition, $C^p (A_\tau) \subseteq C^p(A)=\{0\}$, therefore $\para{A,\bracket{\cdot ,..., \cdot}_\tau}$ is nilpotent of class at most $p$.
\item We suppose now that $\para{A,\bracket{\cdot ,..., \cdot}_\tau}$ is nilpotent of class $p \in \mathbb{N}$, and that there exists $u \in A$ such that $\bracket{u,x_1,...,x_n}_\tau = \bracket{x_1,...,x_n}, \forall x_1,...,x_n \in A$, then $C^p(A_\tau)=\{0\}$. By the preceding proposition, $C^p(A) = C^p(A_\tau)=\{0\}$. Therefore $\para{A,\bracket{\cdot ,..., \cdot}}$ is nilpotent, since $C^{p-1}(A) = C^{p-1}(A_\tau) \neq \{0\}$,  $\para{A,\bracket{\cdot ,..., \cdot}_\tau}$ and $\para{A,\bracket{\cdot ,..., \cdot}}$ have the same nilpotency class.
\end{enumerate}
\end{proof}


\section{Central extensions of $(n+1)$-Hom-Lie algebras induced by $n$-Hom-Lie algebras} \label{sec:centralextentions}
We first review  the definition and the main properties of central extensions of $n$-Hom-Lie algebras, then we study central extensions of $(n+1)$-Hom-Lie algebras induced by $n$-Hom-Lie algebras. Central extensions of Hom-Lie algebras were introduced in \cite{sheng:hom rep} together with the relevant cohomology complex, those definitions were then generalized to the $n$-ary case in \cite{n-ary hom rep}.

\begin{definition}[\cite{sheng:hom rep,n-ary hom rep}]
Let $\para{A,\bracket{\cdot,...,\cdot},\alpha}$ be a multiplicative $n$-Hom-Lie algebra. We call central extension of $A$ the space $\bar{A}=A \oplus \K c$ equipped with the bracket $\bracket{\cdot,...,\cdot}_c$ and the morphism $\alpha_c$ defined by:
\[ \bracket{x_1,...,x_n}_c = \bracket{x_1,...,x_n} + \omega\para{x_1,...,x_n} c \text{ and } \bracket{x_1,...,x_{n-1},c}=0 , \forall x_1,...,x_n \in A. \]
\[ \alpha_c(\bar{x}) = \alpha(x) + (\lambda (\bar{x})) c \quad \forall \bar{x}=x+x_c c \in \bar{A}, x\in A \]
Where $\lambda: \bar{A}\to \K$ is a linear map and $\omega: A^n \to \K$ is a skew-symmetric $n$-linear map such that $\bracket{\cdot,...,\cdot}_c$ and $\alpha_c$ satisfy the Hom-Nambu identity.
\end{definition}

\begin{proposition}[\cite{sheng:hom rep,n-ary hom rep}]
\begin{itemize}
\item The bracket of a central extension of an $n$-Hom-Lie algebra satisfies the Hom-Nambu identity if and only if the map $\omega$ is a $2$-cocycle for the scalar cohomology of $n$-Hom-Lie algebras.
\item Two central extensions defined by two maps $\omega_1$ and $\omega_2$ are isomorphic if and only if $\omega_2 - \omega_1$ is a $2$-coboundary for the scalar cohomology of $n$-Hom-Lie algebras.
\end{itemize}
\end{proposition}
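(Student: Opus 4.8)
The plan is to verify both statements by a direct but careful expansion, exploiting that $c$ is central in $\bar{A}$. First I would record that $\bracket{\cdot,\ldots,\cdot}_c$ is automatically skew-symmetric, since both $\bracket{\cdot,\ldots,\cdot}$ and $\omega$ are. For the first claim I would write the Hom-Nambu identity for $\bar{A}$ on arguments $\bar{x}_1,\ldots,\bar{x}_{n-1},\bar{y}_1,\ldots,\bar{y}_n$ with $\bar{x}_i = x_i + s_i c$ and $\bar{y}_j = y_j + t_j c$. Because $\bracket{z_1,\ldots,z_{n-1},c}=0$, every bracket evaluated on a tuple containing $c$ in some slot vanishes, so by multilinearity only the $A$-parts $x_i,y_j$ survive, and $\alpha_c$ may be replaced by $\alpha$ throughout (its $c$-component $\lambda(\cdot)c$ always lands in a bracket slot and dies). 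Consequently both sides of the identity lie in $A\oplus\K c$, and the identity splits into its $A$-component and its $\K c$-component.

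The $A$-component is precisely the Hom-Nambu identity of $(A,\bracket{\cdot,\ldots,\cdot},\alpha)$, hence holds by hypothesis. Collecting the coefficients of $c$, the $\K c$-component reads
\begin{align*}
&\omega\para{\alpha(x_1),\ldots,\alpha(x_{n-1}),\bracket{y_1,\ldots,y_n}} \\
&\qquad =\sum_{i=1}^n\omega\para{\alpha(y_1),\ldots,\bracket{x_1,\ldots,x_{n-1},y_i},\ldots,\alpha(y_n)}.
\end{align*}
I would then compare this with the scalar coboundary $d^2$ evaluated at $X_1=x_1\wedge\cdots\wedge x_{n-1}$, $X_2=y_1\wedge\cdots\wedge y_{n-1}$, $z=y_n$: expanding $X_j\cdot z$ and $[X_1,X_2]_\alpha$ according to their definitions turns $d^2\omega(X_1,X_2,z)=0$ into exactly the displayed relation (up to an overall sign). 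Hence the Hom-Nambu identity for $\bracket{\cdot,\ldots,\cdot}_c$ holds if and only if $\omega\in Z_0^2(A,\K)$, which is the first assertion.

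For the second claim I would describe a morphism of central extensions $\bar{A}^{\omega_1}\to\bar{A}^{\omega_2}$ restricting to the identity on $A$ and on $\K c$; any such map has the form $f(x+tc)=x+\para{\mu(x)+t}c$ for a linear form $\mu:A\to\K$, and is automatically bijective (inverse $x+tc\mapsto x+(t-\mu(x))c$), so ``isomorphic'' reduces to the existence of a weak morphism of this shape. Imposing $f\para{\bracket{x_1,\ldots,x_n}_{c,\omega_1}}=\bracket{f(x_1),\ldots,f(x_n)}_{c,\omega_2}$ and again using centrality of $c$ to discard the $\mu(x_i)c$ terms, this collapses to $\mu\para{\bracket{x_1,\ldots,x_n}}=(\omega_2-\omega_1)(x_1,\ldots,x_n)$, which by the $p=1$ instance of the coboundary formula is exactly $\omega_2-\omega_1=\pm d^1\mu$, i.e. $\omega_2-\omega_1\in B_0^2(A,\K)$. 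The converse is the same computation read backwards: given $\mu$ with $d^1\mu=\omega_2-\omega_1$, the map $f$ above is an extension isomorphism. Finally one checks the compatibility $f\circ\alpha_{c,1}=\alpha_{c,2}\circ f$ with the twist maps, which matches the cochain-level conditions on $\lambda$ and $\mu$.

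The computations here are essentially bookkeeping; the one place to be careful is matching the sign conventions of $d^1$ and $d^2$ with the fully expanded Hom-Nambu identity, and confirming that the auxiliary data $\lambda$ (and the intertwining condition on $f$) genuinely decouple from the cocycle/coboundary conditions rather than introducing extra constraints. Since $c$ is central this decoupling does occur, so no genuine obstacle arises.
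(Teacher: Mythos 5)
The paper does not prove this proposition; it is quoted from the cited references \cite{sheng:hom rep,n-ary hom rep} without argument, so there is no in-paper proof to compare against. Your argument is the standard one and is correct: splitting the Hom-Nambu identity on $\bar{A}=A\oplus\K c$ into its $A$- and $\K c$-components does identify the $\K c$-component with $d^2\omega=0$ evaluated on decomposables $X=x_1\wedge\cdots\wedge x_{n-1}$, $Y=y_1\wedge\cdots\wedge y_{n-1}$, $z=y_n$ (indeed $d^2\omega(X,Y,z)=\omega(\bar{\alpha}(X),Y\cdot z)-\omega([X,Y]_\alpha,\alpha(z))-\omega(\bar{\alpha}(Y),X\cdot z)$ reproduces exactly the displayed relation), and the reduction of equivalence of extensions to $\omega_2-\omega_1=\pm d^1\mu$ is likewise the standard computation. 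The only point you rightly flag but leave unexamined is the intertwining condition $f\circ\alpha_{c,1}=\alpha_{c,2}\circ f$, which imposes $\mu(\alpha(x))+\lambda_1(\bar{x})=\lambda_2(f(\bar{x}))$; since the paper never specifies whether $\lambda$ is part of the data an isomorphism must preserve, this is a gap in the statement rather than in your proof.
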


Now we show the relationship between the central extensions of an $n$-Hom-Lie algebra and those of the induced $(n+1)$-Hom-Lie algebra (by some trace $\tau$): 
\begin{theorem}
Let $(A,\bracket{\cdot,...,\cdot},\alpha)$ be a multiplicative $n$-Hom-Lie algebra, $\tau$ be a trace satisfying $\tau \circ \alpha = \tau$, and $\para{A,\bracket{\cdot,...,\cdot}_\tau,\alpha}$ be the induced (multiplicative) $(n+1)$-Hom-Lie algebra. Let $\para{\bar{A},\bracket{\cdot,...,\cdot}_c,\alpha_c}$ be a central extension of $(A,\bracket{\cdot,...,\cdot},\alpha)$, where 
\[\bar{A}=A\oplus \mathbb{K} c ,\quad \bracket{x_1,...,x_n}_c = \bracket{x_1,...,x_n} + \omega\para{x_1,...,x_n}c\text{ and }\alpha_c(\bar{x})=\alpha(x)+\lambda(\bar{x}) c, \] 
with $\lambda : \bar{A} \to \K$, and assume that $\tau$ extends  to $\bar{A}$ by $\tau(c)=0$. Then the $(n+1)$-Hom-Lie algebra $\para{\bar{A},\bracket{\cdot,...,\cdot}_{c,\tau},\alpha_c}$ induced by $\para{\bar{A},\bracket{\cdot,...,\cdot}_c,\alpha_c}$, is a central extension of $(A,\bracket{\cdot,...,\cdot}_\tau,\alpha)$, where 
\[\bracket{x_1,...,x_n}_{c,\tau} = \bracket{x_1,...,x_{n+1}}_\tau + \omega_\tau \para{x_1,...,x_{n+1}}c\]  with \[\omega_\tau \para{x_1,...,x_{n+1}} = \sum_{i=1}^{n+1} (-1)^{i-1} \tau\para{x_i} \omega(x_1,...,\hat{x}_i,...,x_{n+1}). \]
\end{theorem}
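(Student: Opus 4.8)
The plan is to verify directly that $\bigl(\bar A,\bracket{\cdot,...,\cdot}_{c,\tau},\alpha_c\bigr)$ is a central extension of $\bigl(A,\bracket{\cdot,...,\cdot}_\tau,\alpha\bigr)$ in the sense of the definition recalled above: that is, that the bracket $\bracket{\cdot,...,\cdot}_{c,\tau}$ has the prescribed form $\bracket{x_1,...,x_{n+1}}_\tau+\omega_\tau(x_1,...,x_{n+1})c$, that $\bracket{x_1,...,x_n,c}_{c,\tau}=0$, and that $\bar A=A\oplus\K c$ with $\alpha_c$ of the prescribed shape indeed carries an $(n+1)$-Hom-Lie structure. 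The first task is purely computational: starting from Definition \ref{def:phitau} applied to the bracket $\bracket{\cdot,...,\cdot}_c$ on $\bar A$, expand $\bracket{x_1,...,x_{n+1}}_{c,\tau}=\sum_{k}(-1)^{k-1}\tau(x_k)\bracket{x_1,...,\hat x_k,...,x_{n+1}}_c$, substitute $\bracket{\cdots}_c=\bracket{\cdots}+\omega(\cdots)c$, and separate the $A$-component from the $\K c$-component; the $A$-part is exactly $\bracket{x_1,...,x_{n+1}}_\tau$ and the $\K c$-part is exactly $\omega_\tau(x_1,...,x_{n+1})c$, which is the claimed formula. Here one uses $\tau(c)=0$ to see that arguments containing $c$ contribute nothing when $c$ sits in the $\tau$-slot.

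Next I would check that $\tau$ (extended by $\tau(c)=0$) is still a trace for the extended bracket, i.e.\ that $\tau$ is a $\bracket{\cdot,...,\cdot}_c$-trace on $\bar A$: indeed $\tau\bigl(\bracket{x_1,...,x_n}_c\bigr)=\tau\bigl(\bracket{x_1,...,x_n}\bigr)+\omega(x_1,...,x_n)\tau(c)=0$. This is the hypothesis needed to invoke Theorem \ref{thm:inducedhomnambulie} (or Theorem \ref{thm:inducedmul}, after checking $\tau\circ\alpha_c=\tau$, which follows from $\tau\circ\alpha=\tau$ and $\tau(c)=0$): it guarantees that $\bigl(\bar A,\bracket{\cdot,...,\cdot}_{c,\tau},\alpha_c\bigr)$ is genuinely an $(n+1)$-Hom-Lie algebra, so we do not have to re-prove the Hom-Nambu identity by hand. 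One should also verify the compatibility conditions \eqref{eq:tautaualpha}--\eqref{eq:taualphaalpha} carry over to $\bar A$, or simply note that $\alpha_c$ restricted appropriately meets the hypotheses of the multiplicative construction theorem; this is the point where a small amount of care with the map $\lambda$ is needed, since $\alpha_c$ is not simply $\alpha$ on $\bar A$.

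Then I would confirm the centrality condition: $\bracket{x_1,...,x_n,c}_{c,\tau}=0$ for all $x_1,...,x_n\in A$. Expanding via the formula just derived, the terms with $\tau(x_i)$ in front involve $\bracket{x_1,...,\hat x_i,...,x_n,c}_c$, and since $\bracket{y_1,...,y_{n-1},c}_c=0$ by the centrality of the original extension, these vanish; the single remaining term carries the factor $\tau(c)=0$. Hence $c$ is central in $\bar A_\tau$, the quotient $\bar A_\tau/\K c$ is isomorphic to $A_\tau$, and the sequence $0\to\K c\to\bar A_\tau\to A_\tau\to0$ exhibits $\bar A_\tau$ as a central extension of $A_\tau$ with cocycle $\omega_\tau$. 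As a coda one may remark that $\omega_\tau$ is exactly $(\omega)_\tau$ in the notation of Definition \ref{def:phitau} applied to the scalar-valued map $\omega$, so by Lemma \ref{lemma:phitlinantisym} it is automatically an $(n+1)$-linear skew-symmetric form; the fact that it is a $2$-cocycle for the scalar cohomology of $A_\tau$ then follows from the general correspondence between central extensions and scalar $2$-cocycles, or could be checked directly.

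The main obstacle is the bookkeeping in the first step: tracking which sign conventions and which excluded-index positions occur when one $\tau$-slot is occupied by $c$, and making sure the $A$-component of $\bracket{\cdots}_{c,\tau}$ collapses cleanly to $\bracket{\cdots}_\tau$ rather than to something twisted by $\lambda$. Everything else is a direct appeal to the already-established construction theorems and to the centrality of the input extension.
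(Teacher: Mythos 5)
Your proposal is correct and follows essentially the same route as the paper: expand $\bracket{x_1,\dots,x_{n+1}}_{c,\tau}$ via Definition \ref{def:phitau}, split off the $A$-component (giving $\bracket{\cdot,\dots,\cdot}_\tau$) from the $\K c$-component (giving $\omega_\tau$), and verify $\bracket{x_1,\dots,x_n,c}_{c,\tau}=0$ using the centrality of $c$ in the original extension together with $\tau(c)=0$. Your additional checks — that the extended $\tau$ is still a $\bracket{\cdot,\dots,\cdot}_c$-trace and that $\tau\circ\alpha_c=\tau$, so Theorem \ref{thm:inducedmul} applies on $\bar A$ — are exactly the details the paper leaves implicit when it asserts the Hom-Nambu identity, and they resolve correctly.
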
 
\begin{proof}
We consider the algebra $\para{\bar{A},\bracket{\cdot,...,\cdot}_{c,\tau},\alpha_c}$ induced by $\para{\bar{A},\bracket{\cdot,...,\cdot}_{c},\alpha_c}$.

Let $x_1,...,x_{n+1} \in A$:
\begin{align*}
&\bracket{x_1,...,x_{n+1}}_{c,\tau} = \sum_{i=1}^{n+1} (-1)^{i-1} \tau\para{x_i}\bracket{x_1,...,\hat{x}_i,...,x_{n+1}}_c \\
&=  \sum_{i=1}^{n+1} (-1)^{i-1} \tau\para{x_i}\left( \bracket{x_1,...,\hat{x}_i,...,x_{n+1}} +  \omega(x_1,...,\hat{x}_i,...,x_{n+1})c \right) \\
&=  \sum_{i=1}^{n+1} (-1)^{i-1} \tau\para{x_i} \bracket{x_1,...,\hat{x}_i,...,x_{n+1}}    
+  \left( \sum_{i=1}^{n+1} (-1)^{i-1} \tau\para{x_i} \omega(x_1,...,\hat{x}_i,...,x_{n+1}) \right) c \\
&= \bracket{x_1,...,x_{n+1}}_\tau + \omega_\tau \para{x_1,...,x_{n+1}} c.
\end{align*}
The map $\omega_\tau \para{x_1,...,x_{n+1}} = \sum_{i=1}^{n+1} (-1)^{i-1} \tau\para{x_i} \omega(x_1,...,\hat{x}_i,...,x_{n+1})$ is a skew-symmetric $(n+1)$-linear form, and $\bracket{\cdot,...,\cdot}_{c,\tau}$ satisfies the Hom-Nambu identity. We have also:
\begin{align*}
\bracket{x_1,...,x_n,c}_{c,\tau}&= \sum_{i=1}^{n} (-1)^{i-1} \tau\para{x_i}\bracket{x_1,...,\hat{x}_i,...,c}_c + (-1)^n \tau(c) \bracket{x_1,...,x_n} \\
&= 0. \qquad \Big(\bracket{x_{i_1},...,x_{i_{n-1}},c}_c = 0 \text{ and } \tau\para{c} = 0.\Big)
\end{align*}
Therefore $\para{\bar{A},\bracket{\cdot,...,\cdot}_{c,\tau},\alpha_c}$ is a central extension of $(A,\bracket{\cdot,...,\cdot}_\tau,\alpha)$. 
\end{proof}


\section{Cohomology of $(n+1)$-Hom-Lie algebras induced by $n$-Hom-Lie algebras} \label{sec:cohomology}

In this section, we study the connections between  the cohomology of a given $n$-Hom-Lie algebra and the cohomology of the induced $(n+1)$-Hom-Lie algebra.

Let $(A,\bracket{\cdot,...,\cdot},\alpha)$ be a multiplicative $n$-Hom-Lie algebra, $\tau$ a trace satisfying $\tau \circ \alpha = \tau$, and $(A,\bracket{\cdot,...,\cdot}_\tau,\alpha)$ the induced algebra. Then we have the following correspondence between 1-cocycles and 2-cocycles of $(A,\bracket{\cdot,...,\cdot},\alpha)$ and those of $(A,\bracket{\cdot,...,\cdot}_\tau,\alpha)$.

\begin{lemma}
If $f : A \to A$ is an $\alpha^k$-derivation of an $n$-Hom-Lie algebra and $\tau$ is a trace map, then $\tau \circ f$ is a trace. 
\end{lemma}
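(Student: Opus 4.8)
The statement to prove is: if $f : A \to A$ is an $\alpha^k$-derivation of a multiplicative $n$-Hom-Lie algebra $(A,\bracket{\cdot,...,\cdot},\alpha)$ and $\tau : A \to \K$ is a $\phi$-trace (i.e.\ $\tau(\bracket{x_1,...,x_n}) = 0$ for all $x_i$), then $\tau \circ f : A \to \K$ is again a trace, meaning $(\tau \circ f)(\bracket{x_1,...,x_n}) = 0$ for all $x_1,...,x_n \in A$. The approach is a direct one-line computation: apply $\tau$ to the derivation identity and use linearity of $\tau$ together with the trace property of $\tau$ term by term.

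\begin{proof}
Let $x_1,...,x_n \in A$. Since $f$ is an $\alpha^k$-derivation, we have
\[
f\para{\bracket{x_1,...,x_n}} = \sum_{i=1}^n \bracket{\alpha^k(x_1),...,f(x_i),...,\alpha^k(x_n)}.
\]
Applying the linear form $\tau$ to both sides and using linearity of $\tau$,
\[
(\tau \circ f)\para{\bracket{x_1,...,x_n}} = \sum_{i=1}^n \tau\para{\bracket{\alpha^k(x_1),...,f(x_i),...,\alpha^k(x_n)}}.
\]
Each term on the right-hand side is $\tau$ evaluated on a bracket $\bracket{z_1,...,z_n}$ with $z_j \in A$ (namely $z_j = \alpha^k(x_j)$ for $j \neq i$ and $z_i = f(x_i)$), hence vanishes because $\tau$ is a trace. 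Therefore $(\tau \circ f)\para{\bracket{x_1,...,x_n}} = 0$ for all $x_1,...,x_n \in A$, i.e.\ $\tau \circ f$ is a trace.
\end{proof}

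There is essentially no obstacle here: the only ingredients are the defining identity of an $\alpha^k$-derivation, the linearity of $\tau$, and the definition of a $\phi$-trace, all recalled earlier in the excerpt. The multiplicativity of $A$ is needed only so that the notion of $\alpha^k$-derivation is defined (it guarantees $\alpha^k$ makes sense and $f \circ \alpha = \alpha \circ f$), but that hypothesis is already built into the statement via the reference to $\alpha^k$-derivations. No case analysis, induction, or sign bookkeeping is required.
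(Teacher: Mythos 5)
Your proof is correct and is essentially identical to the paper's: both apply $\tau$ to the $\alpha^k$-derivation identity, use linearity to distribute $\tau$ over the sum, and kill each term by the trace property of $\tau$. Nothing further is needed.
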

\begin{proof}
For all $x,y \in A$, we have 
\begin{align*}
\tau\para{ f\para{ \bracket{x_1,...,x_n} } } &= \tau\para{ \sum_{i=1}^n \bracket{\alpha^k(x_1),...,\alpha^k(x_{i-1}),f(x_i),\alpha^k(x_{i+1}),...,\alpha^k(x_n) } }\\
&=  \sum_{i=1}^n \tau\para{ \bracket{\alpha^k(x_1),...,\alpha^k(x_{i-1}),f(x_i),\alpha^k(x_{i+1}),...,\alpha^k(x_n) } } = 0.
\end{align*} 
\end{proof}

\begin{proposition}
Let $f:A \to A$ be an $\alpha^k$-derivation of the $n$-Hom-Lie algebra $A$, then $f$ is a derivation of the induced $(n+1)$-Lie algebra if and only if
\[ \alpha^k\para{ \bracket{x_1,...,x_{n+1 }}_{\tau \circ f} } =0, \forall x_1,...,x_{n+1 } \in A. \]

\end{proposition}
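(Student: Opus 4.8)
The plan is to unwind what it means for $f$ to be an $\alpha^k$-derivation of the induced algebra $\para{A,\bracket{\cdot,...,\cdot}_\tau,\alpha}$ and to compare the two sides of the resulting Leibniz rule. The compatibility $f\circ\alpha=\alpha\circ f$ is already part of the hypothesis (both $A$ and $A_\tau$ carry the \emph{same} twisting map $\alpha$), so the only condition to analyse is
\[ f\para{\bracket{x_1,...,x_{n+1}}_\tau} = \sum_{i=1}^{n+1}\bracket{\alpha^k(x_1),...,\alpha^k(x_{i-1}),f(x_i),\alpha^k(x_{i+1}),...,\alpha^k(x_{n+1})}_\tau , \]
and the aim is to show that the difference of its right-hand and left-hand sides is exactly $\alpha^k\para{\bracket{x_1,...,x_{n+1}}_{\tau\circ f}}$. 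Note that $\tau\circ f$ is a linear form (indeed a trace, by the previous lemma), so $\phi_{\tau\circ f}=\bracket{\cdot,...,\cdot}_{\tau\circ f}$ is well defined in the sense of Definition \ref{def:phitau}.

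First I would expand the left-hand side: starting from $\bracket{x_1,...,x_{n+1}}_\tau=\sum_{j}(-1)^{j-1}\tau(x_j)\bracket{x_1,...,\hat{x}_j,...,x_{n+1}}$ and applying $f$, then using that $f$ is an $\alpha^k$-derivation of $A$ on each $n$-ary bracket, one obtains a double sum indexed by a removed slot $j$ (carrying the factor $(-1)^{j-1}\tau(x_j)$) and a slot $i\neq j$ where $f$ acts, with $\alpha^k$ applied to all remaining arguments. Next I would expand the right-hand side by applying Definition \ref{def:phitau} to each summand $\bracket{\alpha^k(x_1),...,f(x_i),...,\alpha^k(x_{n+1})}_\tau$. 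Two types of terms arise: those in which the slot excluded by $\phi_\tau$ is some $\alpha^k(x_j)$ with $j\neq i$, and the single term in which the excluded slot is $f(x_i)$ itself. For the first type, using $\tau\circ\alpha=\tau$ (hence $\tau\circ\alpha^k=\tau$) to replace $\tau(\alpha^k(x_j))$ by $\tau(x_j)$, and then swapping the order of the two summations and renaming, one recognizes exactly $f\para{\bracket{x_1,...,x_{n+1}}_\tau}$. For the second type, multiplicativity of $A$ gives $\alpha^k\para{\phi(z_1,...,z_n)}=\phi\para{\alpha^k(z_1),...,\alpha^k(z_n)}$, so the sum of these terms is $\sum_{i}(-1)^{i-1}\tau(f(x_i))\,\alpha^k\para{\phi(x_1,...,\hat{x}_i,...,x_{n+1})}=\alpha^k\para{\bracket{x_1,...,x_{n+1}}_{\tau\circ f}}$.

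Combining the two expansions yields the identity
\[ \sum_{i=1}^{n+1}\bracket{\alpha^k(x_1),...,f(x_i),...,\alpha^k(x_{n+1})}_\tau - f\para{\bracket{x_1,...,x_{n+1}}_\tau} = \alpha^k\para{\bracket{x_1,...,x_{n+1}}_{\tau\circ f}} , \]
valid for all $x_1,...,x_{n+1}\in A$, from which the equivalence in the statement follows at once. I expect no conceptual obstacle here; the only thing requiring care is the bookkeeping of the signs $(-1)^{j-1}$ attached to the hatted positions and the reindexing that identifies the "first type" block with $f\para{\bracket{x_1,...,x_{n+1}}_\tau}$. The two structural facts doing the real work are $\tau\circ\alpha^k=\tau$ (which turns $\tau(\alpha^k(x_j))$ into $\tau(x_j)$) and multiplicativity (which lets $\alpha^k$ pass through $\phi$), and it is precisely these that make the cross terms collapse to the asserted expression.
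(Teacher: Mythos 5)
Your argument is correct and is essentially the computation the paper itself performs: the paper expands $f\para{\bracket{x_1,\dots,x_{n+1}}_\tau}$ via the derivation property, swaps the summations, uses $\tau\circ\alpha^k=\tau$ and multiplicativity, and adds and subtracts the term $\sum_j(-1)^{j-1}\tau(f(x_j))\bracket{\alpha^k(x_1),\dots,\widehat{x_j},\dots,\alpha^k(x_{n+1})}$ to arrive at exactly your identity $f\para{\bracket{\cdot}_\tau}=\sum_j\bracket{\alpha^k(x_1),\dots,f(x_j),\dots,\alpha^k(x_{n+1})}_\tau-\alpha^k\para{\bracket{\cdot}_{\tau\circ f}}$. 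Your bookkeeping of the two types of excluded slots and the two structural inputs ($\tau\circ\alpha^k=\tau$ and multiplicativity) matches the paper's proof, so there is nothing to add.
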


\begin{proof}
Let $f$ be a derivation of $A$ and $x_1,...,x_{n+1} \in A$:
\begin{align*}
f&\para{\bracket{x_1,...,x_{n+1}}_\tau} = f\para{\sum_{i=1}^{n+1} (-1)^{i-1} \tau(x_i)\bracket{x_1,...,x_{i-1},x_{i+1},...,x_{n+1} } }\\
&= \sum_{i=1}^{n+1} (-1)^{i-1} \tau(x_i) f\para{\bracket{x_1,...,x_{i-1},x_{i+1},...,x_{n+1} } }\\
&= \sum_{i=1}^{n+1} (-1)^{i-1} \tau(x_i) \sum_{j=1 ; j\neq i}^{n+1} \bracket{\alpha^k(x_1),...,f(x_j),...,\alpha^k(x_{i-1}),\alpha^k(x_{i+1}),...,\alpha^k(x_{n+1}) }\\
&= \sum_{j=1}^{n+1}  \sum_{i=1 ; i\neq j}^{n+1} (-1)^{i-1} \tau(x_i) \bracket{\alpha^k(x_1),...,f(x_j),...,\alpha^k(x_{i-1}),\alpha^k(x_{i+1}),...,\alpha^k(x_{n+1}) } \\
&+ \sum_{j=1}^{n+1} (-1)^{j-1} \tau\para{f(x_j)}\bracket{\alpha^k(x_1),...,\alpha^k(x_{j-1}),\alpha^k(x_{j+1}),...,\alpha^k(x_{n+1})}\\
&- \sum_{j=1}^{n+1} (-1)^{j-1} \tau\para{f(x_j)}\bracket{\alpha^k(x_1),...,\alpha^k(x_{j-1}),\alpha^k(x_{j+1}),...,\alpha^k(x_{n+1})}  \\
&= \sum_{j=1}^{n+1} \sum_{i=1 ; i\neq j}^{n+1} (-1)^{i-1} \tau(\alpha^k(x_i)) \bracket{\alpha^k(x_1),...,f(x_j),...,\alpha^k(x_{i-1}),\alpha^k(x_{i+1}),...,\alpha^k(x_{n+1}) }\\ 
&+ \sum_{j=1}^{n+1} (-1)^{j-1} \tau\para{f(x_j)}\bracket{\alpha^k(x_1),...,\alpha^k(x_{j-1}),\alpha^k(x_{j+1}),...,\alpha^k(x_{n+1})} \\
&- \sum_{j=1}^{n+1} (-1)^{j-1} \tau\para{f(x_j)}\bracket{\alpha^k(x_1),...,\alpha^k(x_{j-1}),\alpha^k(x_{j+1}),...,\alpha^k(x_{n+1})}\\
&= \sum_{j=1}^{n+1} \bracket{\alpha^k(x_1),...,\alpha^k(x_{j-1}),f(x_j),\alpha^k(x_{j+1}),...,\alpha^k(x_{n+1})}_\tau \\ 
& - \alpha^k\para{ \sum_{j=1}^{n+1} (-1)^{j-1} (\tau\circ f)\para{x_j}\bracket{x_1,...,x_{j-1},x_{j+1},...,x_{n+1}} }.
\end{align*}
\end{proof}

Now, we consider the 2-cocycles of an $(n+1)$-Hom-Lie algebra induced by an $n$-Hom-Lie algebra.

\begin{proposition}\label{hom-Z2ad}
Let $(A,\bracket{\cdot,...,\cdot},\alpha)$ be a multiplicative $n$-Hom-Lie algebra, $\tau$ be a trace satisfying conditions of Theorem \ref{thm:inducedmul} and $(A,\bracket{\cdot,...,\cdot}_\tau,\alpha)$ be the induced $(n+1)$-Hom-Lie algebra. Let $\varphi \in Z^2_{ad}(A,A)$ such that
$ \tau\circ \varphi = 0.$

Then $\varphi_\tau: L(A_\tau)\wedge A \to A$ defined by:
\[ \varphi_\tau \para{X,z} = \sum_{i=1}^n (-1)^{i-1} \tau(x_i) \varphi(X_i , z)+(-1)^n \tau(z)\varphi(X_n , x_n),\]
is a 2-cocycle of the induced $(n+1)$-Hom-Lie algebra. Where for $X=x_1\wedge ...\wedge x_n) \in L(A_\tau)$, $X_i=x_1\wedge ...\wedge x_{i-1} \wedge x_{i+1} \wedge ... \wedge x_n$.
\end{proposition}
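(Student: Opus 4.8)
The plan is to verify the two conditions defining membership in $Z^2_{ad}(A_\tau,A_\tau)$: that $\varphi_\tau$ is a well-defined $A$-valued $2$-cochain of the induced algebra, and that it is killed by the adjoint coboundary operator $d^2_\tau$ of $\para{A,\bracket{\cdot,...,\cdot}_\tau,\alpha}$ (which is a multiplicative $(n+1)$-Hom-Lie algebra by Theorem \ref{thm:inducedmul}, so its adjoint complex is defined and $L(A_\tau)=\wedge^n A$). For the first point, $\varphi_\tau$ is linear in $z$ and multilinear in $x_1,\dots,x_n$ by construction; its skew-symmetry in $x_1,\dots,x_n$ follows, for the sum $\sum_{i=1}^{n}(-1)^{i-1}\tau(x_i)\varphi(X_i,z)$, from Lemma \ref{lemma:phitlinantisym} applied with the skew-symmetric $(n-1)$-linear map $y\mapsto\varphi(y,z)$ in the role of $\phi$, while the summand $(-1)^n\tau(z)\varphi(X_n,x_n)$ is handled using the skew-symmetry of $\varphi$ in its fundamental slots. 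The cochain compatibility $\alpha\para{\varphi_\tau(X,z)}=\varphi_\tau(\bar\alpha(X),\alpha(z))$ is obtained by applying $\alpha$ to the defining formula, pulling it inside each $\varphi$ via $\alpha\circ\varphi=\varphi\circ\bar\alpha$, and using $\tau\circ\alpha=\tau$. I would also record here the analogue of Lemma \ref{lemma:phitlinantisym} for cochains: since $\tau\circ\varphi=0$, taking $\tau$ of the defining formula of $\varphi_\tau$ yields $\tau\circ\varphi_\tau=0$; this is used repeatedly below.

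The heart of the argument is the expansion of $d^2_\tau\varphi_\tau(X,Y,z)$ for $X=x_1\wedge\cdots\wedge x_n$, $Y=y_1\wedge\cdots\wedge y_n\in L(A_\tau)$ and $z\in A$. Writing out the $p=2$ instance of the adjoint coboundary operator of $A_\tau$,
\begin{align*}
d^2_\tau\varphi_\tau(X,Y,z) ={}& -\varphi_\tau\para{[X,Y]_\alpha,\alpha(z)} -\varphi_\tau\para{\bar\alpha(Y),X\cdot_\tau z}+\varphi_\tau\para{\bar\alpha(X),Y\cdot_\tau z}\\
&+\bar\alpha(X)\cdot_\tau\varphi_\tau(Y,z) -\bar\alpha(Y)\cdot_\tau\varphi_\tau(X,z) -\para{\varphi_\tau(X,\quad)\cdot_{\alpha,\tau} Y}\cdot_\tau\alpha(z),
\end{align*}
one substitutes the definition of $\varphi_\tau$ everywhere and replaces each induced operation by its expression in terms of $\tau$ and the $n$-Hom-Lie operations: the action $X\cdot_\tau z=\bracket{x_1,\dots,x_n,z}_\tau=\sum_{i=1}^n(-1)^{i-1}\tau(x_i)(X_i\cdot z)+(-1)^n\tau(z)\bracket{x_1,\dots,x_n}$; the induced Hom-Leibniz bracket $[X,Y]_\alpha=\sum_{k=1}^n\bar\alpha(y_1)\wedge\cdots\wedge(X\cdot_\tau y_k)\wedge\cdots\wedge\bar\alpha(y_n)$, into which the preceding expansion is fed once more; and similarly for $\varphi_\tau(X,\quad)\cdot_{\alpha,\tau}Y$. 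At every place where an action $\cdot_\tau$, or a trace slot of a $\varphi_\tau$, is applied to an argument of the form $\varphi_\tau(\cdots)$ or $\bracket{\cdots}_\tau$, the corresponding ``tail'' term carries a factor $\tau\para{\varphi_\tau(\cdots)}$ or $\tau\para{\bracket{\cdots}_\tau}$, which vanishes by $\tau\circ\varphi_\tau=0$ and by Lemma \ref{lemma:phitlinantisym} (that $\tau$ is a $\phi_\tau$-trace) respectively; this eliminates a large number of terms at once.

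Collecting the rest, the surviving terms organize into three families: (i) for each choice of ``surviving'' trace slot, a $\tau$-weighted copy of the six-term expression $d^2\varphi(\widetilde X,\widetilde Y,\widetilde z)$ of the original $n$-Hom-Lie algebra $A$, which is $0$ because $\varphi\in Z^2_{ad}(A,A)$; (ii) terms with a repeated factor $\tau(x_i)\tau(x_j)$ (or $\tau(y_i)\tau(y_j)$) multiplying a quantity antisymmetric in $i,j$, which cancel in pairs exactly as in Lemma \ref{lemma:DSG}; and (iii) a few leftover terms that match after applying $\tau\circ\alpha=\tau$. Summing the three contributions gives $d^2_\tau\varphi_\tau=0$, hence $\varphi_\tau\in Z^2_{ad}(A_\tau,A_\tau)$.

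The principal obstacle is the bookkeeping in the expansion above, concentrated in two spots: the term $-\varphi_\tau\para{[X,Y]_\alpha,\alpha(z)}$ and the term $\para{\varphi_\tau(X,\quad)\cdot_{\alpha,\tau}Y}\cdot_\tau\alpha(z)$, each of which unfolds into a triple sum — over which entry of $Y$ is acted on, over the trace slots of the outer $\varphi_\tau$, and over the trace slots produced by the inner induced bracket — and the asymmetric summand $(-1)^n\tau(z)\varphi(X_n,x_n)$ of $\varphi_\tau$, which does not parallel the $\tau(x_i)$-summands and must be followed separately through all six terms. I would organize the computation by first checking the identity on arguments with $z\in\ker\tau$, where $\varphi_\tau$ reduces to its first sum, most tail terms are absent from the start, and $d^2_\tau\varphi_\tau(X,Y,z)$ collapses cleanly to a $\tau$-weighted combination of vanishing $d^2\varphi$'s; then the contributions proportional to $\tau(z)$ are treated as a correction, which is precisely where the hypothesis $\tau\circ\varphi=0$ is indispensable.
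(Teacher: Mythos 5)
Your proposal follows essentially the same route as the paper's proof: expand the six-term adjoint coboundary $d^2_\tau\varphi_\tau(X,Y,z)$ of the induced algebra, kill the tail terms using $\tau\circ\varphi=0$ and the fact that $\tau$ is a trace for the induced bracket, and regroup the survivors into $\tau$-weighted copies of $d^2\varphi$ (vanishing since $\varphi\in Z^2_{ad}(A,A)$) plus pairs cancelling by Lemma \ref{lemma:DSG} and matchings via $\tau\circ\alpha=\tau$ — which is exactly the decomposition the paper carries out. The only additions are your explicit verification that $\varphi_\tau$ is a well-defined cochain and the organizational device of treating $z\in\ker\tau$ first, neither of which changes the substance of the argument.
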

\begin{proof}
Let $\varphi \in Z^2_{ad}(A,A)$ satisfying the condition above, and let 
\[ \varphi_\tau\para{X,z} = \sum_{i=1}^n (-1)^{i-1} \tau(x_i) \varphi(X_i , z)+(-1)^n \tau(z)\varphi(X_n , x_n).\] Then we have:

\begin{align*}
d^2& \varphi_\tau(X,Y,z) = - \varphi_\tau([X , Y]_\alpha, \alpha (z)) - \varphi_\tau(\bar{\alpha}(Y), X\cdot z) + \varphi_\tau(\bar{\alpha}(X), Y \cdot z) \\
&-\para{\varphi_\tau(X, \quad)\cdot_\alpha Y}\cdot \alpha(z) - \bar{\alpha}(Y)\cdot \varphi_\tau (X,z) + \bar{\alpha}(X) \cdot \varphi_\tau(Y,z)\\
&= - \sum_{j=1}^n \sum_{k=1}^n (-1)^{j+k}\tau(x_j)\tau(\alpha(y_k)) \varphi([X_j, Y_k]_\alpha , \alpha(z))\\
&- \sum_{j=1}^n \sum_{i=1}^n (-1)^{j+n-1} \tau(x_j)\tau(\alpha(z)) \varphi(\alpha(y_1),...,\alpha(y_{i-1}), X_j \cdot y_i, \alpha(y_{i+1}),...,\alpha(y_n))\\
&- \sum_{i=1}^n \sum_{k=1 ; k\neq i}^n (-1)^{k+n-1} \tau(y_i)\tau(\alpha(y_k)) \varphi(\alpha(y_1),...,\widehat{y_k},...,\alpha(y_{i-1}), X_n \cdot x_n, \alpha(y_{i+1}),...,\alpha(y_n),\alpha(z))\\
&- \sum_{i=1}^n \tau(y_i)\tau(\alpha(z)) \varphi(\alpha(y_1),...,\alpha(y_{i-1}), X_n \cdot x_n, \alpha(y_{i+1}),...,\alpha(y_n))\\
&- \sum_{j=1}^n \sum_{k=1}^n (-1)^{j+k} \tau(x_j)\tau(\alpha(y_k)) \varphi(\bar{\alpha}(Y_k),X_j \cdot z) - \sum_{i=1}^n (-1)^{i+n-1} \tau(\alpha(y_i))\tau(z) \varphi(\bar{\alpha}(Y_i), X_n \cdot x_n)\\
&+ \sum_{j=1}^n \sum_{k=1}^n (-1)^{j+k} \tau(\alpha(x_j)) \tau(y_k) \varphi(\bar{\alpha}(X_j), Y_k \cdot z) + \sum_{j=1}^n (-1)^{j+n-1} \tau(\alpha(x_j))\tau(z) \varphi(\bar{\alpha}(X_j),Y_n \cdot y_n)\\
& - \sum_{j=1}^n \sum_{k=1}^n (-1)^{j+k} \tau(x_j) \tau(\alpha(y_k)) \para{\varphi_\tau(X_j, \quad)\cdot_\alpha Y_k}\cdot \alpha(z)\\
&- \sum_{i=1}^n \sum_{j=1}^n (-1)^{n+j-1} \tau(x_j)\tau(\alpha(z)) \bracket{\alpha(y_1), ...,\alpha(y_{i-1}),\varphi(X_j,y_i),...,\alpha(y_n)} \\
&- \sum_{i=1}^n \sum_{k=1 ; k \neq i}^n (-1)^{n+k-1} \tau(y_i)\tau(\alpha(y_k)) \bracket{\alpha(y_1),...,\widehat{y_k},...,\alpha(y_{i-1}),\varphi(X_n,x_n),...,\alpha(y_n),\alpha(z)}\\
&- \sum_{i=1}^n \tau(y_i)\tau(\alpha(z)) \bracket{\alpha(y_1),...,\alpha(y_{i-1}),\varphi(X_n,x_n),...,\alpha(y_n)}\\
&- \sum_{j=1}^n \sum_{k=1}^n (-1)^{j+k} \tau(x_j)\tau(\alpha(y_k)) \bar{\alpha}(Y_k)\cdot \varphi(X_j, z) 
- \sum_{i=1}^n (-1)^{i+n-1} \tau(\alpha(y_i))\tau(z) \bar{\alpha}(Y_i) \cdot \varphi(X_n, x_n)\\
&+ \sum_{j=1}^n \sum_{k=1}^n (-1)^{j+k} \tau(\alpha(x_j)) \tau(y_k) \bar{\alpha}(X_j) \cdot \varphi(Y_k , z)+ \sum_{j=1}^n (-1)^{j+n-1} \tau(\alpha(x_j))\tau(z) \bar{\alpha}(X_j)\cdot \varphi(Y_n, y_n) \\
&- \sum_{i=1}^n \sum_{j=1}^n (-1)^{i+j} \tau(x_j) \tau(\varphi(X_j,y_i)) \bar{\alpha}(Y_i \cdot z) 
- \sum_{i=1}^n (-1)^{i+n-1} \tau(y_i) \tau(\varphi(X_n,x_n)) \bar{\alpha}(Y_i \cdot z) \\
&- \sum_{i=1}^n (-1)^{n+i-1} \tau(x_i) \tau(\varphi(X_i,z) \bar{\alpha}(Y_n \cdot y_n) -\tau(z)\tau(\varphi(X_n,x_n)) \bar{\alpha}(Y_n \cdot y_n) \\
&+ \sum_{i=1}^n (-1)^{n+i-1} \tau(y_i) \tau(\varphi(Y_i,z) \bar{\alpha}(X_n \cdot x_n) +\tau(z)\tau(\varphi(Y_n,y_n)) \bar{\alpha}(X_n \cdot x_n) \\
&= \sum_{j=1}^n \sum_{k=1}^n (-1)^{j+k} \tau(x_j) \tau(x_k) d^2\varphi(X_j,Y_k,z) \\ 
& - \sum_{j=1}^n (-1)^{j+n-1} \tau(x_j)\tau(z) \sum_{i=1}^{n-1} \varphi(\alpha(y_1),...,\alpha(y_{i-1}),X_j \cdot y_i,\alpha(y_{i+1}),...,\alpha(y_{n-1}),\alpha(y_n)) \\
&- \sum_{j=1}^n (-1)^{j+n-1} \tau(x_j)\tau(z) \varphi(\bar{\alpha}(Y_n), X_j \cdot y_n)
+ \sum_{j=1}^n (-1)^{j+n-1} \tau(x_j)\tau(z) \varphi(\bar{\alpha}(X_j), Y_n \cdot y_n) \\
& - \sum_{j=1}^n (-1)^{j+n-1} \tau(x_j)\tau(z) \sum_{i=1}^{n-1} \bracket{\alpha(y_1),...,\alpha(y_{i-1}),X_j \cdot y_i,\alpha(y_{i+1}),...,\alpha(y_{n-1}),\alpha(y_n)} \\
&- \sum_{j=1}^n (-1)^{j+n-1} \tau(x_j)\tau(z) \bar{\alpha}(Y_n) \cdot \varphi( X_j, y_n)
+ \sum_{j=1}^n (-1)^{j+n-1} \tau(x_j)\tau(z) \bar{\alpha}(X_j)\cdot \varphi(Y_n, y_n) \\ 
&- \sum_{i=1}^n \tau(y_i) \tau(z) \para{(-1)^{n-i}+(-1)^{n+i-1}}\varphi(\bar{\alpha}(Y_i), X_n \cdot x_n)\\
&- \sum_{i=1}^n \tau(y_i)\tau(\alpha(z)) \varphi(\alpha(y_1),...,\alpha(y_{i-1}), X_n \cdot x_n, \alpha(y_{i+1}),...,\alpha(y_n))\\
&- \sum_{i=1}^n \sum_{k=1 ; k\neq i}^n (-1)^{k+n-1} \tau(y_i)\tau(y_k) \varphi(\alpha(y_1),...,\widehat{y_k},...,\alpha(y_{i-1}), X_n \cdot x_n, \alpha(y_{i+1}),...,\alpha(y_n),\alpha(z))\\
&- \sum_{i=1}^n \sum_{k=1 ; k \neq i}^n (-1)^{n+k-1} \tau(y_i)\tau(y_k) \bracket{\alpha(y_1),...,\widehat{y_k},...,\alpha(y_{i-1}),\varphi(X_n,x_n),...,\alpha(y_n),\alpha(z)}\\
&- \sum_{i=1}^n \tau(y_i)\tau(z) \bracket{\alpha(y_1),...,\alpha(y_{i-1}),\varphi(X_n,x_n),...,\alpha(y_n)}\\
& - \sum_{i=1}^n (-1)^{i+n-1} \tau(\alpha(y_i))\tau(z) \bar{\alpha}(Y_i) \cdot \varphi(X_n, x_n) \\
&- \sum_{i=1}^n \sum_{j=1}^n (-1)^{i+j} \tau(x_j) \tau(\varphi(X_j,y_i)) \bar{\alpha}(Y_i \cdot z) 
- \sum_{i=1}^n (-1)^{i+n-1} \tau(y_i) \tau(\varphi(X_n,x_n)) \bar{\alpha}(Y_i \cdot z) \\
&- \sum_{i=1}^n (-1)^{n+i-1} \tau(x_i) \tau(\varphi(X_i,z) \bar{\alpha}(Y_n \cdot y_n) -\tau(z)\tau(\varphi(X_n,x_n)) \bar{\alpha}(Y_n \cdot y_n) \\
&+ \sum_{i=1}^n (-1)^{n+i-1} \tau(y_i) \tau(\varphi(Y_i,z) \bar{\alpha}(X_n \cdot x_n) +\tau(z)\tau(\varphi(Y_n,y_n)) \bar{\alpha}(X_n \cdot x_n) \\
& =  \sum_{j=1}^n \sum_{k=1}^n (-1)^{j+k} \tau(x_j) \tau(x_k) d^2\varphi(X_j,Y_k,z)  + \sum_{j=1}^n (-1)^{j+n-1} \tau(x_j)\tau(z) d^2 \varphi(X_j,Y_n,y_n)\\
&- \sum_{i=1}^n \sum_{k=1 ; k\neq i}^n (-1)^{k+n-1} \tau(y_i)\tau(y_k) \varphi(\alpha(y_1),...,\widehat{y_k},...,\alpha(y_{i-1}), X_n \cdot x_n, \alpha(y_{i+1}),...,\alpha(y_n),\alpha(z))\\
&- \sum_{i=1}^n \tau(y_i) \tau(z) \para{(-1)^{n-i}+(-1)^{n+i-1}}\varphi(\bar{\alpha}(Y_i), X_n \cdot x_n)\\
&- \sum_{i=1}^n \tau(y_i)\tau(z) \varphi(\alpha(y_1),...,\alpha(y_{i-1}), X_n \cdot x_n, \alpha(y_{i+1}),...,\alpha(y_n))\\
&- \sum_{i=1}^n \sum_{k=1 ; k \neq i}^n (-1)^{n+k-1} \tau(y_i)\tau(y_k) \bracket{\alpha(y_1),...,\widehat{y_k},...,\alpha(y_{i-1}),\varphi(X_n,x_n),...,\alpha(y_n),\alpha(z)}\\
&- \sum_{i=1}^n \tau(y_i)\tau(z) \bracket{\alpha(y_1),...,\alpha(y_{i-1}),\varphi(X_n,x_n),...,\alpha(y_n)}\\
& - \sum_{i=1}^n (-1)^{i+n-1} \tau(y_i)\tau(z) \bar{\alpha}(Y_i) \cdot \varphi(X_n, x_n) \\
&- \sum_{i=1}^n \sum_{j=1}^n (-1)^{i+j} \tau(x_j) \tau(\varphi(X_j,y_i)) \bar{\alpha}(Y_i \cdot z) 
- \sum_{i=1}^n (-1)^{i+n-1} \tau(y_i) \tau(\varphi(X_n,x_n)) \bar{\alpha}(Y_i \cdot z)  \\
&- \sum_{i=1}^n (-1)^{n+i-1} \tau(x_i) \tau(\varphi(X_i,z) \bar{\alpha}(Y_n \cdot y_n) -\tau(z)\tau(\varphi(X_n,x_n)) \bar{\alpha}(Y_n \cdot y_n) \\
&+ \sum_{i=1}^n (-1)^{n+i-1} \tau(y_i) \tau(\varphi(Y_i,z) \bar{\alpha}(X_n \cdot x_n) +\tau(z)\tau(\varphi(Y_n,y_n)) \bar{\alpha}(X_n \cdot x_n) \\
&= 0. \qquad \text{by Lemma \ref{lemma:DSG} and the condition } \tau \circ \varphi =0.
\end{align*}

\end{proof}

\begin{proposition}
Every $1$-cocycle for the scalar cohomology of a multiplicative $n$-Hom-Lie algebra $(A,\bracket{\cdot,...,\cdot},\alpha)$ is a $1$-cocycle for the scalar cohomology of the induced algebra. Notice that $1$-cocycles for the scalar cohomology are exactly traces.
\end{proposition}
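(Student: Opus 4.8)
The plan is to first unwind what a $1$-cocycle for the scalar cohomology means in terms of the bracket, and then to observe that the induced bracket $\bracket{\cdot,\dots,\cdot}_\tau$ is built entirely out of values of the original bracket.

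First I would compute the scalar coboundary operator $d^p$ in degree $p=1$. A $1$-cochain is simply a linear form $\varphi:A\to\K$ (there is no $\wedge^{n-1}A$ factor). In the formula for $d^p\varphi$, the first double sum is empty when $p=1$, and the second sum reduces to the single term $j=1$, so that
\[ d^1\varphi(X,z) = -\,\varphi(X\cdot z) = -\,\varphi\bigl(\bracket{x_1,\dots,x_{n-1},z}\bigr),\qquad X=x_1\wedge\cdots\wedge x_{n-1}. \]
Hence $\varphi\in Z_0^1(A,\K)$ if and only if $\varphi$ vanishes on every bracket $\bracket{x_1,\dots,x_n}$, i.e. if and only if $\varphi$ is a $\bracket{\cdot,\dots,\cdot}$-trace in the sense of Definition \ref{def:phitrace}; this justifies the parenthetical remark in the statement. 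Exactly the same computation applied to the induced $(n+1)$-Hom-Lie algebra $\para{A,\bracket{\cdot,\dots,\cdot}_\tau,\alpha}$ (which is multiplicative by Theorem \ref{thm:inducedmul}) shows that $\psi\in Z_0^1(A_\tau,\K)$ if and only if $\psi$ is a $\bracket{\cdot,\dots,\cdot}_\tau$-trace.

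Next, I would take a $1$-cocycle $\varphi$ of $A$ --- equivalently, by the previous step, a $\bracket{\cdot,\dots,\cdot}$-trace --- and evaluate it on an arbitrary element $\bracket{x_1,\dots,x_{n+1}}_\tau$. Using Definition \ref{def:phitau} and linearity of $\varphi$,
\[ \varphi\bigl(\bracket{x_1,\dots,x_{n+1}}_\tau\bigr) = \sum_{k=1}^{n+1}(-1)^{k-1}\tau(x_k)\,\varphi\bigl(\bracket{x_1,\dots,\hat{x}_k,\dots,x_{n+1}}\bigr) = 0, \]
since each term $\varphi\bigl(\bracket{\cdots}\bigr)$ vanishes. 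Thus $\varphi$ is a $\bracket{\cdot,\dots,\cdot}_\tau$-trace, hence by the first step $\varphi\in Z_0^1(A_\tau,\K)$, which is exactly the assertion.

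There is essentially no obstacle here: the argument is pure linearity, together with the observation that $\bracket{\cdot,\dots,\cdot}_\tau$ takes values in the linear span of the values of $\bracket{\cdot,\dots,\cdot}$. This is the same mechanism underlying the last assertion of Lemma \ref{lemma:phitlinantisym} (the special case $\varphi=\tau$); one only needs to note that it goes through verbatim for any $\bracket{\cdot,\dots,\cdot}$-trace, not merely for the particular trace $\tau$ used to define the induced bracket. The one point deserving a moment's care is the degenerate shape of $1$-cochains, so that the identification ``$1$-cocycle $=$ trace'' is read off correctly for both $A$ and $A_\tau$.
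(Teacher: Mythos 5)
Your proof is correct and follows essentially the same route as the paper: identify $1$-cocycles with traces via the degenerate form of $d^1$, then use that $\bracket{\cdot,\dots,\cdot}_\tau$ takes values in the span of values of $\bracket{\cdot,\dots,\cdot}$. The paper simply invokes the inclusion $\bracket{A,\dots,A}_\tau\subset\bracket{A,\dots,A}$ from Remark \ref{Hom-D3subD} where you inline the same one-line expansion of the induced bracket.
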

\begin{proof}
Let $\omega$ be a $1$-cocycle for the scalar cohomology of $(A,\bracket{\cdot,...,\cdot},\alpha)$, then 
\[\forall x_1,...,x_{n-1},z \in A, d^1 \omega(x_1,...,x_{n-1},z) = \omega \para{\bracket{x_1,...,x_{n-1},z}} = 0,\]
 which is equivalent to $\bracket{A,...,A} \subset \ker \omega$. By Remark \ref{Hom-D3subD}, $\bracket{A,...,A}_\tau \subset \bracket{A,...,A}$ and then $\bracket{A,...,A}_\tau \subset \ker \omega$, that is 
\[ \forall x_1,...,x_n,z \in A, \omega\para{\bracket{x_1,...,x_n,z}_\tau}=d^1 \omega \para{x_1,...,x_n,z} = 0.\]
 It means that $\omega$ is a $1$-cocycle for the scalar cohomology of $\para{A,\bracket{\cdot,...,\cdot}_\tau,\alpha}$.  
\end{proof}

\begin{proposition}\label{Hom-Z2tri}
Let $\varphi \in Z^2_{0}(A,\mathbb{K})$, then the map $\varphi_\tau: L(A_\tau)\wedge A \to \K$ defined by:
\[ \varphi_\tau\para{X,z} = \sum_{i=1}^n (-1)^{i-1} \tau(x_i) \varphi(X_i , z)+(-1)^n \tau(z)\varphi(X_n , x_n)\] is a $2$-cocycle of the induced $(n+1)$-Hom-Lie algebra,
where for $X=x_1 \wedge ... \wedge x_n \in L(A_\tau)$, $X_i=x_1 \wedge ... \wedge x_{i-1} \wedge x_{i+1} \wedge ... \wedge x_n)$.
\end{proposition}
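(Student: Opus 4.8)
The plan is to mimic the proof of Proposition \ref{hom-Z2ad}, which becomes substantially shorter in the scalar setting. Recall that for the scalar complex of the induced $(n+1)$-Hom-Lie algebra $(A,\bracket{\cdot,...,\cdot}_\tau,\alpha)$, whose fundamental objects are the $n$-fold wedges $L(A_\tau)=\wedge^n A$, the coboundary of a $2$-cochain $\varphi_\tau$ reads
\[ d^2\varphi_\tau(X,Y,z) = -\varphi_\tau([X,Y]_\alpha,\alpha(z)) - \varphi_\tau(\bar\alpha(Y),X\cdot z) + \varphi_\tau(\bar\alpha(X),Y\cdot z), \]
where $X\cdot z = \bracket{x_1,...,x_n,z}_\tau$ and $[X,Y]_\alpha$ is the composition of fundamental objects computed inside $A_\tau$. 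First I would substitute the defining formula for $\varphi_\tau$ into each of the three terms, and then expand every occurrence of $\bracket{\cdot,...,\cdot}_\tau$ and of the composition $\cdot_\alpha$ of $A_\tau$ back in terms of the $n$-ary bracket $\bracket{\cdot,...,\cdot}$ and of $\tau$, using $\tau\circ\alpha=\tau$ to replace each factor $\tau\para{\alpha(\cdot)}$ by $\tau(\cdot)$ and noting that $X_n\cdot x_n=\bracket{x_1,...,x_n}$.

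Next I would sort the resulting sum into two families. The first family collects the terms with coefficient $\tau(x_j)\tau(x_k)$ (respectively $\tau(x_j)\tau(z)$, $\tau(y_i)\tau(z)$); these reassemble exactly as $\pm\,\tau(\cdot)\tau(\cdot)\,d^2\varphi$ of the original scalar $2$-cochain $\varphi$, evaluated on fundamental objects of $A$ obtained from $X,Y$ by deletion, and hence vanish because $\varphi\in Z^2_0(A,\K)$. The second family collects the residual terms, namely those in which $\varphi$ is evaluated on a list one entry of which is a full bracket $\bracket{x_1,...,x_n}$; these arise from nesting $X\cdot(X\cdot(\cdot))$ and from $[X,Y]_\alpha$, through the last summand in the definition of $\phi_\tau$, and carry a coefficient of the form $(-1)^n\tau(y_j)\tau(y_k)$. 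Precisely such a combination is annihilated by Lemma \ref{lemma:DSG}, applied with $B=\K$ and $\psi=\varphi$ (which is an $n$-argument skew-symmetric map since it is a scalar $2$-cochain of $A$). Combining the two families gives $d^2\varphi_\tau=0$, that is $\varphi_\tau\in Z^2_0(A_\tau,\K)$.

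I would also point out why no extra hypothesis on $\tau\circ\varphi$ is required, in contrast with Proposition \ref{hom-Z2ad}: the scalar coboundary operator has no term in which a fundamental object of $A_\tau$ acts on the value of $\varphi$, so $\tau$ is never applied to an output of $\varphi$ and the obstruction met in the adjoint case is simply absent. The only real work is the sign-and-index bookkeeping in the expansion — in particular keeping the global $(-1)^n$ and the truncation $\widehat{y_j},\dots,y_{k-1}$ in the bracket slot aligned with the exact form of Lemma \ref{lemma:DSG} — and this is where I expect any difficulty to lie, though it is strictly lighter than the computation already displayed for Proposition \ref{hom-Z2ad}.
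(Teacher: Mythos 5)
Your proposal follows essentially the same route as the paper's proof: expand the three terms of the scalar coboundary of $\varphi_\tau$, use $\tau\circ\alpha=\tau$, regroup the $\tau(x_j)\tau(y_k)$ and $\tau(x_j)\tau(z)$ terms into coboundaries $d^2\varphi(X_j,Y_k,z)$ and $d^2\varphi(X_j,Y_n,y_n)$ of the original algebra (which vanish since $\varphi\in Z^2_0(A,\K)$), and kill the residual terms carrying $\bracket{x_1,...,x_n}$ in a slot of $\varphi$ via Lemma \ref{lemma:DSG} together with the pairwise sign cancellation of the $\tau(y_i)\tau(z)$ terms. Your observation that the hypothesis $\tau\circ\varphi=0$ is not needed here, because the scalar coboundary never applies $\tau$ to an output of $\varphi$, is correct and consistent with the paper.
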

\begin{proof}
Let $\varphi \in Z^2_{0}(A,\mathbb{K})$ satisfying the condition above, and let \[\varphi_\tau\para{X,z} = \sum_{i=1}^n (-1)^{i-1} \tau(x_i) \varphi(X_i , z)+(-1)^n \tau(z)\varphi(X_n , x_n).\] Then we have:
\begin{align*}
d^2 &\varphi_\tau(X,Y,z) = - \varphi_\tau([X , Y]_\alpha, \alpha (z)) - \varphi_\tau(\bar{\alpha}(Y), X\cdot z) + \varphi_\tau(\bar{\alpha}(X), Y \cdot z) \\
&= - \sum_{j=1}^n \sum_{k=1}^n (-1)^{j+k}\tau(x_j)\tau(\alpha(y_k)) \varphi([X_j , Y_k]_\alpha ,\alpha(z) )\\
&- \sum_{j=1}^n \sum_{i=1}^n (-1)^{j+n-1} \tau(x_j)\tau(\alpha(z)) \varphi(\alpha(y_1),...,\alpha(y_{i-1}), X_j \cdot y_i, \alpha(y_{i+1}),...,\alpha(y_n))\\
&- \sum_{i=1}^n \sum_{k=1 ; k\neq i}^n (-1)^{k+n-1} \tau(y_i)\tau(\alpha(y_k)) \varphi(\alpha(y_1),...,\widehat{y_k},...,\alpha(y_{i-1}), X_n \cdot x_n, \alpha(y_{i+1}),...,\alpha(y_n),\alpha(z))\\
&- \sum_{i=1}^n \tau(y_i)\tau(\alpha(z)) \varphi(\alpha(y_1),...,\alpha(y_{i-1}), X_n \cdot x_n, \alpha(y_{i+1}),...,\alpha(y_n))\\
&- \sum_{j=1}^n \sum_{k=1}^n (-1)^{j+k} \tau(x_j)\tau(\alpha(y_k)) \varphi(\bar{\alpha}(Y_k),X_j \cdot z) - \sum_{i=1}^n (-1)^{i+n-1} \tau(\alpha(y_i))\tau(z) \varphi(\bar{\alpha}(Y_i), X_n \cdot x_n)\\
&+ \sum_{j=1}^n \sum_{k=1}^n (-1)^{j+k} \tau(\alpha(x_j)) \tau(y_k) \varphi(\bar{\alpha}(X_j), Y_k \cdot z) + \sum_{j=1}^n (-1)^{j+n-1} \tau(\alpha(x_j))\tau(z) \varphi(\bar{\alpha}(X_j),Y_n \cdot y_n)\\
&= \sum_{j=1}^n \sum_{k=1}^n (-1)^{j+k} \tau(x_j) \tau(y_k) d^2\varphi(X_j,Y_k,z) \\
& - \sum_{j=1}^n (-1)^{j+n-1} \tau(x_j)\tau(\alpha(z)) \sum_{i=1}^{n-1} \varphi(\alpha(y_1),...,\alpha(y_{i-1}),X_j \cdot y_i,\alpha(y_{i+1}),...,\alpha(y_{n-1}),\alpha(y_n))\\
& -\sum_{j=1}^n (-1)^{j+n-1} \tau(x_j)\tau(\alpha(z)) \varphi(\bar{\alpha}(Y_n), X_j \cdot y_n)
+ \sum_{j=1}^n (-1)^{j+n-1} \tau(\alpha(x_j))\tau(z) \varphi(\bar{\alpha}(X_j), Y_n \cdot y_n)\\
&- \sum_{i=1}^n \sum_{k=1 ; k\neq i}^n (-1)^{k+n-1} \tau(y_i)\tau(\alpha(y_k)) \varphi(\alpha(y_1),...,\widehat{y_k},...,\alpha(y_{i-1}), X_n \cdot x_n, \alpha(y_{i+1}),...,\alpha(y_n),\alpha(z))\\
&- \sum_{i=1}^n \tau(y_i)\tau(\alpha(z)) \varphi(\alpha(y_1),...,\alpha(y_{i-1}), X_n \cdot x_n, \alpha(y_{i+1}),...,\alpha(y_n))\\
&- \sum_{i=1}^n (-1)^{i+n-1} \tau(\alpha(y_i))\tau(z) \varphi(\bar{\alpha}(Y_i), X_n \cdot x_n)\\
& =  \sum_{j=1}^n \sum_{k=1}^n (-1)^{j+k} \tau(x_j) \tau(y_k) d^2\varphi(X_j,Y_k,z) + \sum_{j=1}^n (-1)^{j+n-1} \tau(x_j)\tau(z) d^2 \varphi(X_j,Y_n,y_n)\\
&- \sum_{i=1}^n \sum_{k=1 ; k\neq i}^n (-1)^{k+n-1} \tau(y_i)\tau(y_k) \varphi(\alpha(y_1),...,\widehat{y_k},...,\alpha(y_{i-1}), X_n \cdot x_n, \alpha(y_{i+1}),...,\alpha(y_n),\alpha(z))\\
&- \sum_{i=1}^n \tau(y_i)\tau(z) \varphi(\alpha(y_1),...,\alpha(y_{i-1}), X_n \cdot x_n, \alpha(y_{i+1}),...,\alpha(y_n))\\
&- \sum_{i=1}^n (-1)^{i+n-1} \tau(y_i)\tau(z) \varphi(\bar{\alpha}(Y_i), X_n \cdot x_n)
= 0. \qquad \text{by Lemma \ref{lemma:DSG}.}
\end{align*}

\end{proof}

\begin{lemma}\label{coBtau}
Let $\omega \in C^1(A,\mathbb{K})$. Then:
\[ d_\tau^1 \omega\para{x_1,...,x_{n+1}} = \sum_{i=1}^{n+1} (-1)^{i-1} \tau(x_i) d^1\omega\para{x_1,...,\widehat{x_i},...,x_{n+1}}, \forall x_1,...,x_{n+1} \in A, \]
where $d^p_\tau$ is the coboundary operator for the cohomology complex of $A_\tau$.
\end{lemma}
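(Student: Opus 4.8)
The plan is to reduce both coboundary operators, when applied to a scalar $1$-cochain, to $\omega$ precomposed with the corresponding brackets, and then to expand the induced bracket using its definition. First I would note that in the definition of $d^p$ the double sum $\sum_{j=1}^p\sum_{k=j+1}^p(\cdots)$ is empty when $p=1$, so for $\omega\in C^1(A,\K)$ only the last term survives and
\[ d^1\omega\para{y_1,\dots,y_n}=(-1)\,\omega\bigl(\bracket{y_1,\dots,y_n}\bigr) \]
for the $n$-Hom-Lie algebra $A$. Since $A_\tau$ is a multiplicative $(n+1)$-Hom-Lie algebra by Theorem \ref{thm:inducedmul}, the same computation in its scalar complex gives, reading the $n+1$ arguments as the fundamental object $X=x_1\wedge\cdots\wedge x_n\in\wedge^n A$ followed by $z=x_{n+1}$,
\[ d^1_\tau\omega\para{x_1,\dots,x_{n+1}}=(-1)\,\omega\bigl(X\cdot z\bigr)=(-1)\,\omega\bigl(\bracket{x_1,\dots,x_{n+1}}_\tau\bigr). \]
(If one instead adopts the unsigned convention for $d^1$ used in the earlier propositions, the factor $(-1)$ drops out; being the same on both sides, it is immaterial.)

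Next I would substitute the defining formula of $\phi_\tau$ from Definition \ref{def:phitau},
\[ \bracket{x_1,\dots,x_{n+1}}_\tau=\sum_{i=1}^{n+1}(-1)^{i-1}\tau(x_i)\,\bracket{x_1,\dots,\widehat{x_i},\dots,x_{n+1}}, \]
and move $\omega$ inside the sum by linearity:
\[ \omega\bigl(\bracket{x_1,\dots,x_{n+1}}_\tau\bigr)=\sum_{i=1}^{n+1}(-1)^{i-1}\tau(x_i)\,\omega\bigl(\bracket{x_1,\dots,\widehat{x_i},\dots,x_{n+1}}\bigr). \]
Each $\bracket{x_1,\dots,\widehat{x_i},\dots,x_{n+1}}$ is a bracket of $n$ elements of $A$, hence $\omega\bigl(\bracket{x_1,\dots,\widehat{x_i},\dots,x_{n+1}}\bigr)=(-1)\,d^1\omega\para{x_1,\dots,\widehat{x_i},\dots,x_{n+1}}$ with the same sign convention; combining the three displays gives the asserted equality.

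I do not expect any genuine obstacle — the argument is simply an unwinding of definitions. The only things to watch are that the double sum vanishes for $p=1$, that one correctly identifies which slot of the $2$-cochain $d^1_\tau\omega$ holds the fundamental object (so that $n$ of the $n+1$ arguments enter the bracket of $A$ and the last plays the role of $z$), and that the sign conventions for $d^1$ and $d^1_\tau$ are kept mutually consistent. It is worth emphasizing that $\omega$ is an arbitrary $1$-cochain, with no cocycle hypothesis: the lemma is an identity relating the two coboundary operators themselves, which is exactly the statement needed to send scalar $2$-coboundaries of $A$ to scalar $2$-coboundaries of $A_\tau$.
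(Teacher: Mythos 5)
Your proof is correct and follows essentially the same route as the paper's: unwind $d^1_\tau\omega$ to $\omega$ applied to the induced bracket, expand $\bracket{\cdot,\dots,\cdot}_\tau$ by its definition, and recognize each summand as $d^1\omega$ on $n$ arguments. Your remark that the overall sign $(-1)$ from the $p=1$ coboundary formula appears on both sides and therefore cancels is accurate (the paper silently omits it on both sides), so the two arguments coincide.
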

\begin{proof}
Let $\omega \in C^1(A,\mathbb{K})$, $x_1,...,x_{n+1} \in A$, then we have:
\begin{align*}
&d_\tau^1 \omega\para{x_1,...,x_{n+1}} = \omega\para{\bracket{x_1,...,x_{n+1}}_\tau}\\
&=\sum_{i=1}^{n+1} (-1)^{i-1} \tau(x_i) \omega\para{\bracket{x_1,...,\widehat{x_i},...,x_{n+1}}}
=  \sum_{i=1}^{n+1} (-1)^{i-1} \tau(x_i) d^1\omega\para{x_1,...,\widehat{x_i},...,x_{n+1}}.
\end{align*}

\end{proof}

\begin{proposition}\label{B2triv_eq}
Let $\varphi_1,\varphi_2 \in Z^2_{0}(A,\mathbb{K})$. If $\varphi_1,\varphi_2$ are in the same cohomology class then $\psi_1,\psi_2$ defined by:
\[ \psi_i\para{x_1,...,x_{n+1}} = \sum_{j=1}^{n+1} (-1)^{j-1} \tau \para{x_j} \varphi_i \para{x_1,...,\widehat{x_j},...,x_{n+1}}, i=1,2, \]
are in the same cohomology class.
\end{proposition}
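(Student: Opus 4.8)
The plan is to reduce the whole statement to the $\K$-linearity of the construction $\varphi\mapsto\varphi_\tau$ combined with Lemma \ref{coBtau}.

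First I would observe that the displayed formula for $\psi_i$ is nothing but the construction of Proposition \ref{Hom-Z2tri} applied to $\varphi_i$: relabelling the last slot as $z=x_{n+1}$ and splitting off the $j=n+1$ term turns $\sum_{j=1}^{n+1}(-1)^{j-1}\tau(x_j)\varphi_i(x_1,\dots,\widehat{x_j},\dots,x_{n+1})$ into $\sum_{i=1}^{n}(-1)^{i-1}\tau(x_i)\varphi_i(X_i,z)+(-1)^n\tau(z)\varphi_i(X_n,x_n)$. Hence, by Proposition \ref{Hom-Z2tri}, both $\psi_1$ and $\psi_2$ lie in $Z^2_0(A_\tau,\K)$, so their cohomology classes are well defined and it remains only to show that $\psi_1-\psi_2\in B^2_0(A_\tau,\K)$.

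Since $\varphi_1$ and $\varphi_2$ are cohomologous, write $\varphi_1-\varphi_2=d^1\omega$ for some $\omega\in C^1(A,\K)$; note that $C^1(A,\K)$ and $C^1(A_\tau,\K)$ are literally the same space, namely the linear forms on $A$, and that $d^1\omega$ is a skew-symmetric $n$-linear $\K$-valued map, so it is a legitimate input to the $(\cdot)_\tau$ construction. The assignment $\varphi\mapsto\varphi_\tau$, with $\varphi_\tau(x_1,\dots,x_{n+1})=\sum_{j=1}^{n+1}(-1)^{j-1}\tau(x_j)\varphi(x_1,\dots,\widehat{x_j},\dots,x_{n+1})$, is manifestly $\K$-linear in $\varphi$, so
\[ \psi_1-\psi_2=(\varphi_1-\varphi_2)_\tau=(d^1\omega)_\tau. \]
By Lemma \ref{coBtau} we have $(d^1\omega)_\tau=d^1_\tau\omega$, the scalar $1$-coboundary of $\omega$ in the complex of $A_\tau$. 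Therefore $\psi_1-\psi_2=d^1_\tau\omega\in B^2_0(A_\tau,\K)$, which is precisely the assertion that $\psi_1$ and $\psi_2$ represent the same class in $H^2_0(A_\tau,\K)$.

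This is essentially bookkeeping, and I do not expect a genuine obstacle. The only mild subtlety is the matching of conventions: confirming that the formula for $\psi_i$ coincides with the $\varphi_\tau$ of Proposition \ref{Hom-Z2tri}, that $1$-cochains and scalar $2$-cocycles for $A$ and for $A_\tau$ are being compared on the correct underlying spaces, and that $d^1\omega$ is an admissible argument of the trace construction. Once these identifications are in place, the proof consists of the three steps above — linearity of $\varphi\mapsto\varphi_\tau$, Lemma \ref{coBtau}, and Proposition \ref{Hom-Z2tri}.
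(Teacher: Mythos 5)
Your proof is correct and follows essentially the same route as the paper: write $\varphi_1-\varphi_2=d^1\omega$, use linearity of $\varphi\mapsto\varphi_\tau$ to get $\psi_1-\psi_2=(d^1\omega)_\tau$, and apply Lemma \ref{coBtau} to identify this with $d^1_\tau\omega$. Your extra remark that $\psi_1,\psi_2$ are indeed cocycles of $A_\tau$ by Proposition \ref{Hom-Z2tri} is a small but harmless addition the paper leaves implicit.
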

\begin{proof}
Let $\varphi_1,\varphi_2 \in Z^2_{0}(A,\mathbb{K})$ be two cocycles in the same cohomology class, that is \[\varphi_2 - \varphi_1 =d^1\alpha,\  \alpha \in C^1(A,\mathbb{K}),\] and 
\[ \psi_i\para{x_1,...,x_{n+1}} = \sum_{j=1}^{n+1} (-1)^{j-1} \tau \para{x_j} \varphi_i \para{x_1,...,\widehat{x_j},...,x_{n+1}}, i=1,2. \]
Then we have:
\begin{align*}
&\psi_2\para{x_1,...,x_{n+1}}-\psi_1\para{x_1,...,x_{n+1}} = \sum_{j=1}^{n+1} (-1)^{j-1} \tau \para{x_j} (\varphi_2 \para{x_1,...,\widehat{x_j},...,x_{n+1}} \\
&- \sum_{j=1}^{n+1} (-1)^{j-1} \tau \para{x_j}  \varphi_1 \para{x_1,...,\widehat{x_j},...,x_{n+1}})\\
&= \sum_{j=1}^{n+1} (-1)^{j-1} \tau \para{x_j} (\varphi_2 -\varphi_1) \para{x_1,...,\widehat{x_j},...,x_{n+1}}\\
&= \sum_{j=1}^{n+1} (-1)^{j-1} \tau \para{x_j} d^1\alpha \para{x_1,...,\widehat{x_j},...,x_{n+1}}
= d_\tau^1 \alpha\para{x_1,...,x_{n+1}}.
\end{align*}
That means that $\psi_1$ and $\psi_2$ are in the same cohomology class. 
\end{proof}


\section{Examples} \label{sec:examples}
\begin{example}
We consider the $4$-dimensional Lie algebra $(A,\bracket{\cdot,\cdot})$ defined, in the basis $(e_i)_{1\leq i \leq 4}$, by 
$$ \bracket{e_1,e_2}=e_2 ; \bracket{e_3,e_4}=e_4. $$
A linear map $\alpha : A \to A$ having, in the basis $(e_i)_{1\leq i \leq 4}$, the matrix $T=(a_{ij})_{1\leq i,j \leq 4}$ is a Lie algebra endomorphism if and only if
$ \alpha\para{\bracket{e_i,e_j}}=\bracket{\alpha(e_i),\alpha(e_j)}, \quad 1\leq i < j \leq 4. $\\
From these conditions, we get the following equations:
\[
a_{22}(a_{11}-1)=0 \ ;\ 
a_{42}(a_{31}-1)=0 \ ;\ 
a_{24}(a_{13}-1)=0 \ ;\ 
a_{44}(a_{33}-1)=0 ;\]
\[
a_{12}=a_{32}=a_{14}=a_{34}=0 \ ;\ 
a_{11} a_{23} - a_{13} a_{21}=0 \ ;\ 
a_{31} a_{43} - a_{33} a_{41}=0 ;\]
\[
a_{22} a_{13} = 0 \ ;\ 
a_{42} a_{33} = 0 \ ;\ 
a_{11} a_{24} = 0 \ ;\ 
a_{31} a_{44} = 0.\]
We choose the endomorphism $\alpha_1$ given, in the basis $(e_i)_{1\leq i \leq 4}$, by the matrix:
\[ T_1= 
\begin{pmatrix}
0 & 0 & 1 & 0\\
0 & 0 & c & d\\
1 & 0 & 0 & 0\\
a & b & 0 & 0
\end{pmatrix} \qquad b,d \neq 0 \]
  
The bracket $\bracket{\cdot,\cdot}_1=\alpha_1 \circ \bracket{\cdot,\cdot}$ is given by 
$$ \bracket{e_1,e_2}_1 = b e_4 ; \bracket{e_3,e_4}_1 = d e_2. $$
The algebra $(A,\bracket{\cdot,\cdot}_1,\alpha_1)$ is a (multiplicative) Hom-Lie algebra (Theorem \ref{twist}). We further refer to this algebra by $A_1$.

We consider now the algebra $(A,\bracket{\cdot,\cdot}_1,\alpha_1)$. We have that $\bracket{A,A}_1 = \left\langle \{ e_2,e_4 \}\right \rangle$. Notice that the bracket $\bracket{\cdot,\cdot}_1$ does not satisfy the Jacobi identity, we have:

\begin{align*}
 \bracket{e_1,\bracket{e_2,e_3}_1}_1 - \bracket{\bracket{e_1,e_2}_1,e_3}_1 -\bracket{e_2,\bracket{e_1,e_3}_1}_1 &= \bracket{e_1, 0}_1 - \bracket{b e_4,e_3}_1 -\bracket{e_2, 0}_1 \\ 
 &= -b \bracket{e_4,e_3}_1= d b e_2 \neq 0.
\end{align*}

Let $\tau : A \to \K$ be a linear map, with 
\[ \tau(x)=\tau\para{\sum_{i=1}^4 x_i e_i} = \sum_{i=1}^4 t_i x_i. \]
The map $\tau$ is a trace if and only if 
\[ t_2=t_4=0. \]
Moreover, a trace map satisfies the condition of Theorem \ref{thm:inducedmul} if and only if 
\[ t_1=t_3. \]

Let $\tau_1 : A \to \K$ be the linear form defined by:
\[ \tau_1(x)=\tau_1\para{\sum_{i=1}^4 x_i e_i} = x_1+x_3. \]
Then by Theorem \ref{thm:inducedmul}, we can construct the induced algebra $(A, \bracket{\cdot,\cdot,\cdot}_{1,\tau_1},\alpha)$, we further refer to it by $A_{\tau_1}$. It is a multiplicative $3$-Hom-Lie algebra. The bracket $\bracket{\cdot,\cdot,\cdot}_{1,\tau_1}$ is given by:
\[\bracket{e_1,e_2,e_3}_{1,\tau_1} = b e_4 ; \bracket{e_1,e_3,e_4}_{1,\tau_1} = d e_2. \]


We look now at the central descending series of these algebras, we get that
\[C^n(A_1) = \left\langle \{ e_2 , e_4 \}\right\rangle, \]
and
\[C^n(A_{\tau}) = \left\langle \{ e_2 , e_4 \}\right\rangle. \]


Now, let $\varphi$ be a skew-symmetric bilinear form on $A$. The map $\varphi$ is fully defined by the scalars 
\[\varphi_{ij}=\varphi\para{e_i,e_j}, \quad 1\leq i<j \leq 4.\]
By solving the equations for $\varphi$ to be a $2$-cocycle, that is 
$d ^2 \varphi (e_i,e_j,e_k)=0$ for $1\leq i<j<k\leq 4$, 
we get
$ \varphi_{14}=0 \ ; \ \varphi_{23}=0 \ ;\ \varphi_{24}=0. $\\
Let $\omega$ be a linear form on $A$, defined by
$ \omega(e_i) = \omega_i, 1\leq i \leq 4. $\\
We find that
\[ d^1\omega (e_1,e_2)= d \omega_4 \ ; \  d^1\omega (e_3,e_4)=b \omega_2 \ ; \  d^1\omega (e_i,e_j)=0 \text{ for } (i,j) \neq (1,2),(3,4) \ ; \ i<j.  \]
We get that the second cohomology group for this algebra is $1$-dimensional. Now for a $2$-cocycle $\varphi$, let us consider $\varphi_{\tau_1}$ defined as in Proposition \ref{Hom-Z2tri}, we get:
\[ \varphi_{\tau_1}(e_1,e_2,e_3) = \varphi_{12} \ ;\ \varphi_{\tau_1}(e_1,e_2,e_4) = 0 \ ;\ \varphi_{\tau_1}(e_1,e_3,e_4) = \varphi_{34} \ ;\ \varphi_{\tau_1}(e_2,e_3,e_4) = 0. \]

On the other hand we have, the $2$-cocycles and $2$-coboundaries of the induced algebra $A_{\tau_1}$. For a linear form $\omega$ on $A$, we have 
\[ d_{\tau_1}^1\omega (e_1,e_2,e_3)= d \omega_4 \ ; \  d_{\tau_1}^1\omega (e_1,e_3,e_4)=b \omega_2 \ ; \  d_{\tau_1}^1\omega (e_1,e_2,e_4)=0 ; d_{\tau_1}^1(e_2,e_3,e_4)=0. \]
For $2$-cocycles, we have that every skew-symmetric trilinear form on $A$ is a $2$-cocycle of the induced algebra. Therefore there exist $2$-cocycles of $A_{\tau_1}$ which are not induced by $2$-cocycles of $A_1$.
 
\end{example}

\begin{example}
We take now the same Lie algebra $(A,\bracket{\cdot,\cdot})$ as in the preceding example, and we choose the morphism $\alpha_2$ given by the matrix
\[T_2 = \begin{pmatrix}
a & 0 & e & 0\\
b & 0 & \frac{b e}{a} & 0\\
1 & 0 & 0 & 0\\
c & d & 0 & 0
\end{pmatrix} \qquad a,d \neq 0 ; a \neq 1. \]
We get the bracket $\bracket{\cdot,\cdot}_2=\alpha_2 \circ \bracket{\cdot,\cdot}$ defined by
$ \bracket{e_1,e_2}_2=d e_4. $\\ We further refer to the algebra $(A,\bracket{\cdot,\cdot}_2,\alpha_2)$ by $A_1$.
Let $\tau : A \to \K$ be a linear map, with 
\[ \tau(x)=\tau\para{\sum_{i=1}^4 x_i e_i} = \sum_{i=1}^4 t_i x_i. \]
The map $\tau$ is a trace if and only if 
$ t_4=0. $
Looking for trace maps satisfying the condition of Theorem \ref{thm:inducedmul}, we find that such a trace map exists if and only if $\alpha_2$ satisfies $e=1-a$. We also have that such a trace map satisfies
$ t_2=0 \ ; \ t_3= (1-a) t_1. $\\
Let $\tau_2 : A \to \K$ be the linear form defined by:
\[ \tau_2(x)=\tau_2\para{\sum_{i=1}^4 x_i e_i} = x_1+(1-a) x_3. \]
Then, by Theorem \ref{thm:inducedmul}, we construct the induced multiplicative $3$-Hom-Lie algebra $(A, \bracket{\cdot,\cdot,\cdot}_{2,\tau_2},\alpha)$, we further refer to it by $A_{\tau_2}$. Its bracket is given by:
\[\bracket{e_1,e_2,e_3}_{2,\tau_2} = d e_4. \]

Now, let $\varphi$ be a skew-symmetric bilinear form on $A$. By solving the equations for $\varphi$ to be a $2$-cocycle, that is 
$d ^2 \varphi (e_i,e_j,e_k)=0$ for $1\leq i<j<k\leq 4$, 
we get
$ \varphi_{14}= - \frac{b}{a} \varphi_{24}. $
Coboundaries for this algebra are given by:
\[ d^1\omega (e_1,e_2)= d \omega_4 \ ; \  d^1\omega (e_i,e_j)=0 \text{ for } (i,j) \neq (1,2) \ ; \ i<j,  \]
where $\omega$ is a linear form on $A$. Which means that the second (scalar) cohomology group of this algebra has dimension $4$.
Now for a $2$-cocycle $\varphi$, let us consider $\varphi_{\tau_1}$ defined as in Proposition \ref{Hom-Z2tri}, we get:
\[ \varphi_{\tau_2}(e_1,e_2,e_3) = (1-a) \varphi_{12} + \varphi_{23} \ ;\ \varphi_{\tau_2}(e_1,e_2,e_4) = \varphi_{24} ; \]
 \[ \varphi_{\tau_2}(e_1,e_3,e_4) = \varphi_{34}+\frac{b(1-a)}{a} \varphi_{24} \ ;\ \varphi_{\tau_2}(e_2,e_3,e_4) = (a-1)\varphi_{24}. \]

On the other hand we have, the $2$-cocycles and $2$-coboundaries of the induced algebra $A_{\tau_2}$. For a linear form $\omega$ on $A$, we have 
\[ d_{\tau_2}^1\omega (e_1,e_2,e_3)= d \omega_4 \ ; \  d_{\tau_2}^1\omega (e_1,e_3,e_4)=0  \ ; \  d_{\tau_2}^1\omega (e_1,e_2,e_4)=0 ; d_{\tau_2}^1(e_2,e_3,e_4)=0. \]
For $2$-cocycles, we have that every skew-symmetric trilinear form on $A$ is a $2$-cocycle of the induced algebra. Therefore there exist $2$-cocycles of $A_{\tau_2}$ which are not induced by $2$-cocycles of $A_2$, more precisely, in this case, we have:
\[\{\varphi_{\tau_2} : \varphi \in Z^2(A_2,\K)\} = \{ \psi \in Z^2(A_{\tau_2},\K) : \psi(e_2,e_3,e_4) = (a-1)\psi(e_1,e_2,e_4) \}.\]
\end{example}

\begin{small}

\section*{Acknowledgement}
A. Kitouni is grateful to Division of Applied Mathematics, the research environment Mathematics and Applied Mathematics (MAM) at the School of Education, Culture and Communication at
 M{\"a}lardalen University, 
V{\"a}ster{\aa}s,  Sweden providing support and excellent research environment during his half a year visit to    M{\"a}lardalen   University when part of the work on this paper has been performed.   

\end{small}

 \textbf{A. Kitouni  and A. Makhlouf, }Université de Haute-Alsace, 4 rue des Frères Lumière, 68093 Mulhouse, France,\\ email: abdennour.kitouni@uha.fr, abdenacer.makhlouf@uha.fr\\ 
\textbf{ S. Silvestrov,}  Mälardalens högskola, Box 883, 721 23 Västerås, Sweden, \\ email: sergei.silvestrov@mdh.se

\end{document}